\newtheorem{thm}{Theorem}[section]
\newtheorem{cor}[thm]{Corollary}
\newtheorem{prop}[thm]{Proposition}
\newtheorem{lem}[thm]{Lemma}
\newtheorem{quest}[thm]{Question}
\theoremstyle{definition}
\newtheorem{defn}[thm]{Definition}
\newtheorem{fact}[thm]{Fact}
\theoremstyle{remark}
\newtheorem{rem}[thm]{Remark}
\let\c@equation\c@thm
\numberwithin{equation}{section}
\def\Ind{\setbox0=\hbox{$x$}\kern\wd0\hbox to 0pt{\hss$\mid$\hss} \lower.9\ht0\hbox to 0pt{\hss$\smile$\hss}\kern\wd0} 
\def\Notind{\setbox0=\hbox{$x$}\kern\wd0\hbox to 0pt{\mathchardef \nn=12854\hss$\nn$\kern1.4\wd0\hss}\hbox to 0pt{\hss$\mid$\hss}\lower.9\ht0 \hbox to 0pt{\hss$\smile$\hss}\kern\wd0} 
\def\ind{\mathop{\mathpalette\Ind{}}} 
\def\nind{\mathop{\mathpalette\Notind{}}}
\title{Transitivity, lowness, and ranks in NSOP$_{1}$ theories}
\author{Artem Chernikov, Byunghan Kim, Nicholas Ramsey}
\date{\today}
\thanks{Chernikov was partially supported by the NSF CAREER grant DMS-1651321 and by a Simons fellowship.  
Kim has been supported by  Samsung Science Technology Foundation under Project Number SSTF-BA1301-03 and NRF of Korea grants 2018R1D1A1A02085584 and 2021R1A2C1009639.}
\begin{document}

\begin{abstract}
We develop the theory of Kim-independence in the context of NSOP$_{1}$ theories satsifying the existence axiom.  We show that, in such theories, Kim-independence is transitive and that $\ind^{K}$-Morley sequences witness Kim-dividing.  As applications, we show that, under the assumption of existence, in a low NSOP$_{1}$ theory, Shelah strong types and Lascar strong types coincide and, additionally, we introduce a notion of rank for NSOP$_{1}$ theories.  
\end{abstract}

\maketitle

\setcounter{tocdepth}{1}
\tableofcontents

This paper furthers the development of the theory of Kim-independence in the context of NSOP$_{1}$ theories satisfying the existence axiom.  Building on earlier work in \cite{ArtemNick} and a suggestion of the second-named author \cite{KimNTP1}, Kim-independence was introduced in \cite{kaplan2017kim}, where it was shown to be a well-behaved notion of independence in NSOP$_{1}$ theories.  This work established a strong analogy between the theory of non-forking independence in simple theories and Kim-independence in NSOP$_{1}$ theories, an analogy which subsequent works have only deepened \cite{kaplan2019local, kaplan2019transitivity, kim2019number}.  Nonetheless, one major difference between the two notions of independence is that, unlike non-forking which makes sense over all sets, Kim-independence is only a sensible notion of independence \emph{over models}:  Kim-independence is defined in terms of formulas that divide with respect to a Morley sequence in a global invariant type, and such a sequence, in general, is only guaranteed to exist over a model.  In \cite{dobrowolski2019independence}, the second and third-named author, together with Dobrowolski, focused on the context of NSOP$_{1}$ theories that satisfy \emph{the existence} axiom.  There, it was shown that Kim-independence may be defined over arbitrary sets and basic theorems of Kim-independence over models hold in this broader context.  

The existence axiom states that every complete type has a global non-forking extension, i.e.~every set is an extension base in the terminology of \cite{chernikov2012forking}.  This is equivalent to the statement that, in every type, there is a (non-forking) Morley sequence and, hence, assuming existence, one may redefine Kim-independence in terms of the formulas that divide along Morley sequences of this kind.  New technical challenges arise in this setting, but in \cite{dobrowolski2019independence} it was shown that Kim-independence satisfies Kim's lemma, symmetry, and the independence theorem for Lascar strong types.  Moreover, all simple theories and all known examples in the growing list of NSOP$_{1}$ theories satisfy existence, and it is expected to hold in all NSOP$_{1}$ theories, see, e.g., \cite[Fact 2.14]{dobrowolski2019independence}.

Here we continue work on Kim-independence in NSOP$_{1}$ theories satisfying existence, in particular, exploring aspects of the theory that are too cumbersome or uninteresting over models.  In Section \ref{transitivity section}, we show that Kim-independence is transitive in an NSOP$_{1}$ theory satisfying existence and that, moreover, Kim-dividing is witnessed by $\ind^{K}$-Morley sequences.  These results were first established over models for all NSOP$_{1}$ theories in \cite{kaplan2019transitivity} and our proofs largely follow the same strategy.  Nonetheless, suitable replacements need to be found for notions that only make sense, in general, over models, like heirs and coheirs.  We find that arguments involving these notions can often be replaced by an argument involving a tree-induction, as in the construction of tree Morley sequences in \cite{kaplan2019transitivity}.  In Section \ref{low section}, we apply these results to low NSOP$_{1}$ theories satisfying existence, showing that Shelah strong types and Lascar strong types coincide, generalizing a result of Buechler for simple theories \cite{buechler1999lascar} (see also \cite{shami2000definability, kim2013simplicity}).  In Section \ref{rank section}, we introduce a notion of rank for NSOP$_{1}$ theories and establish some of its basic properties.  Finally, in Section \ref{Kim-Pillay theorem}, we generalize the Kim-Pillay criterion for Kim-independence from \cite[Theorem 6.1]{ArtemNick} and \cite[Theorem 6.11]{kaplan2019transitivity} to give a criterion for NSOP$_{1}$ in theories satisfying existence, which, additionally, gives an abstract characterization of Kim-independence over arbitrary sets in this setting.  

%\thanks{This work arose out of a visit made by the second-named author to UCLA.%, and all the results here were established around that time and retain their preliminary character.  We write them down here with the hope that they provide case studies for what a theory of Kim-independence over arbitrary sets might look like, and to encourage others to continue this work.  
%}

\section{Preliminaries}

In this paper, $T$ will always be a complete theory with monster model $\mathbb{M}$.  We will implicitly assume all models and sets of parameters are small, that is, of cardinality less than the degree of saturation and homogeneity of $\mathbb{M}$.  If we discuss an $I$-indexed indiscernible sequence $(a_{i})_{i \in I}$, we will implicitly assume $I$ is linearly ordered by $<$ and, given $i \in I$, we will write $a_{<i}$ and $a_{\leq i}$ for the subsequences $(a_{j})_{j < i}$ and $(a_{j})_{j \leq i}$ respectively. 

\begin{defn}
Suppose $A$ is a set of parameters.
\begin{enumerate}
\item We say that a formula $\varphi(x;a)$ \emph{divides} over a set $A$ if there is an $A$-indiscernible sequence $\langle a_{i} : i < \omega \rangle$ with $a_{0} = a$ such that $\{\varphi(x;a_{i}) : i < \omega\}$ is inconsistent.  
\item A formula $\varphi(x;a)$ is said to \emph{fork} over $A$ if $\varphi(x;a) \vdash \bigvee_{i < k} \psi_{i}(x;c_{i})$, for some $k < \omega$, with $\psi_{i}(x;c_{i})$ dividing over $A$.  
\item We say a partial type divides (forks) over $A$ if it implies a formula that divides (forks) over $A$.  
\item For tuples $a$ and $b$, we write $a \ind^{d}_{A} b$ or $a \ind_{A} b$ to indicate that $\text{tp}(a/Ab)$ does not divide over $A$ or does not fork over $A$, respectively.  
\item A \emph{Morley sequence} $(a_{i})_{i \in I}$ over $A$ is an infinite $A$-indiscernible sequence such that $a_{i} \ind_{A} a_{<i}$ for all $i \in I$.  If $p \in S(A)$, we say $(a_{i})_{i \in I}$ is a Morley sequence \emph{in p} if, additionally, $a_{i} \models p$ for all $i \in I$.  
\end{enumerate}
\end{defn}

The following is one of the key definitions of this paper.  It defines a context in which Kim-independence may be studied over arbitrary sets.

\begin{defn}
We define the \emph{existence axiom} to be any one of the following equivalent conditions on $T$:
\begin{enumerate}
\item For all parameter sets $A$, any type $p \in S(A)$ does not fork over $A$.  
\item For all parameter sets $A$, no consistent formula over $A$ forks over $A$.  
\item For all parameter sets $A$, every type $p \in S(A)$ has a global extension that does not fork over $A$.
\item For all parameter sets $A$ and any $p\in S(A)$, there is a Morley sequence in $p$.
\end{enumerate}
If $T$ satisfies the existence axiom, we will often abbreviate this by writing $T$ is \emph{with existence}.  See, e.g., \cite[Remark 2.6]{dobrowolski2019independence} for the equivalence of (1)\textemdash (4).  
\end{defn}

Under existence, we may define Kim-independence over arbitrary sets.  The following definition was given in \cite{dobrowolski2019independence}, but it was observed already in \cite[Theorem 7.7]{kaplan2017kim} that this agrees with the original definition over models.  

\begin{defn}
Suppose $T$ satisfies the existence axiom.
\begin{enumerate}
\item We say a formula $\varphi(x;a)$ \emph{Kim-divides} over $A$ if there is a sequence $\langle a_{i} : i < \omega\rangle$ which is a Morley sequence over $A$ with $a_{0} = a$ and $\{\varphi(x;a_{i}) : i < \omega\}$ inconsistent.
\item A formula $\varphi(x;a)$ is said to \emph{Kim-fork} over $A$ if $\varphi(x;a) \vdash \bigvee_{i < k} \psi_{i}(x;c_{i})$, where each $\psi_{i}(x;c_{i})$ Kim-divides over $A$.  
\item We say a type Kim-divides (Kim-forks) over $A$ if it implies a formula that Kim-divides (Kim-forks) over $A$.  
\item For tuples $a$ and $b$, we write $a \ind^{K}_{A} b$ to indicate that $\text{tp}(a/Ab)$ does not Kim-divide over $A$.  
\item An $\ind^{K}$-\emph{Morley sequence} $(a_{i})_{i \in I}$ over $A$ is an infinite $A$-indiscernible sequence such that $a_{i} \ind^{K}_{A} a_{<i}$ for all $i \in I$.  
\end{enumerate}
\end{defn}

\begin{rem}
By Kim's lemma \cite[Proposition 2.2.6]{kim2013simplicity}, if $T$ is simple, a formula Kim-divides over a set $A$ if and only if it divides over $A$.  
\end{rem}

\begin{defn} \cite[Definition 2.2]{dvzamonja2004maximality} \label{sop1def}
The formula $\varphi(x;y)$ has SOP$_{1}$ if there is a collection of tuples $(a_{\eta})_{\eta \in 2^{<\omega}}$ so that 
\begin{itemize}
\item For all $\eta \in 2^{\omega}$, $\{\varphi(x;a_{\eta | \alpha}) : \alpha < \omega\}$ is consistent.
\item For all $\eta \in 2^{<\omega}$, if $\nu$ extends $\eta \frown \langle 0 \rangle$, then $\{\varphi(x;a_{\nu}), \varphi(x;a_{\eta \frown 1})\}$ is inconsistent,
\end{itemize}
where $\unlhd$ denotes the tree partial order on $2^{<\omega}$.  We say $T$ is SOP$_{1}$ if some formula has SOP$_{1}$ modulo $T$.  $T$ is NSOP$_{1}$ otherwise.  
\end{defn}

\begin{defn}
Suppose $A$ is a set of parameters.  
\begin{enumerate}
\item We say that tuples $a$ and $b$ \emph{have the same (Shelah) strong type over} $A$, written $a \equiv^{S}_{A} b$, if $E(a,b)$ holds (i.e.~$E(a',b')$ holds for all corresponding finite subtuples $a'$ and $b'$ of $a$ and $b$ respectively) for every $A$-definable equivalence relation $E(x,y)$ with finitely many classes.  
\item The group $\mathrm{Autf}(\mathbb{M}/A)$ of \emph{Lascar strong automorphisms (of the monster) over }$A$ is the subgroup of $\mathrm{Aut}(\mathbb{M}/A)$ generated by $\bigcup \{\mathrm{Aut}(\mathbb{M}/M) : A \subseteq M \prec \mathbb{M} \}$.  We say $a$ and $b$ have \emph{the same Lascar strong type over }$A$, written $a \equiv^{L}_{A} b$, if there is some $\sigma \in \mathrm{Autf}(\mathbb{M}/A)$ such that $\sigma(a) = b$.  By a \emph{Lascar strong type} over $A$, we mean an equivalence class of the relation $\equiv^{L}_{A}$.  
\item A type-definable equivalence relation $E$ on $\alpha$-tuples, for an ordinal $\alpha$, is called \emph{bounded} if it has small  number of classes.  We say $a$ and $b$ \emph{have the same }KP\emph{-strong type over }$A$, written $a \equiv^{\mathrm{KP}}_{A} b$, if $E(a,b)$ holds for all bounded type-definable equivalence relations over $A$. 
\item We say that $T$ is $G$\emph{-compact over} $A$ when $a \equiv^{L}_{A} b$ if and only if $a \equiv^{\mathrm{KP}}_{A} b$ for all (possibly infinite) tuples $a,b$.  We say $T$ is $G$\emph{-compact} if it is $G$-compact over all finite sets $A$.  
\end{enumerate} 
\end{defn}

%\begin{rem}
%The relation $\equiv^{L}_{A}$ of having the same Lascar strong type over $A$ may be described in a different manner using indiscernible sequences.  We declare $R(a,b)$ if there is an $A$-indiscernible sequence $I$ such that both $a \frown I$ and $b \frown I$ are $A$-indiscernible (where $a \frown I$ denotes the indiscernible sequence obtained by concanenating $a$ and the sequence $I$).  The relation $\equiv^{L}_{A}$ is, then, the equivalence relation generated by $R$.  For a proof of this equivalence, see \cite[Proposition 3.15]{kim2013simplicity}.  
%\end{rem}
%
In \cite{dobrowolski2019independence}, several basic facts about Kim-independence in NSOP$_{1}$ theories with existence were established.  As we will make extensive use of them throughout the paper, we record them below.

\begin{fact} \label{basic Kim-independence facts}
Assume $T$ is NSOP$_{1}$ with existence and $A$ is a set of parameters.  Then the following properties hold.  
\begin{enumerate}
\item Extension: If $\pi(x)$ is a partial type over $B \supseteq A$ which does not Kim-divide over $A$, then there is a completion $p \in S(B)$ of $\pi$ that does not Kim-divide over $A$.  In particular, if $a \ind^{K}_{A} b$ and $c$ is arbitrary, there is some $a' \equiv_{Ab} a$ such that $a \ind^{K}_{A} bc$.  \cite[Proposition 4.1]{dobrowolski2019independence}
\item Symmetry:  $a \ind^{K}_{A} b \iff b \ind^{K}_{A} a$. \cite[Corollary 4.9]{dobrowolski2019independence}
\item Kim's Lemma for Morley sequences:  the formula $\varphi(x;a)$ Kim-divides over $A$ if and only if $\{\varphi(x;a_{i}) : i < \omega\}$ is inconsistent for all Morley sequences $\langle a_{i} : i < \omega\rangle$ over $A$ with $a_{0} = a$.  \cite[Theorem 3.5]{dobrowolski2019independence}
\item Kim-forking = Kim-dividing: if a formula $\varphi(x;a)$ Kim-forks over $A$, then $\varphi(x;a)$ Kim-divides over $A$.  \cite[Proposition 4.1]{dobrowolski2019independence}
\item The chain condition:  if $a \ind^{K}_{A} b$ and $I = \langle b_{i} : i < \omega \rangle$ is a Morley sequence over $A$ with $b_{0} = b$, then there is $a' \equiv_{Ab} a$ such that $I$ is $Aa'$-indiscernible and $a' \ind^{K}_{A} I$ (this follows from (3), as in, e.g., \cite[Corollary 5.15]{dobrowolski2019independence}).  
\item The independence theorem for Lascar strong types: if $a_{0} \equiv^{L}_{A} a_{1}$, $a_{0} \ind^{K}_{A} b$, $a_{1} \ind^{K}_{A} c$, and $b \ind^{K}_{A} c$, then there is some $a_{*}$ with $a_{*} \equiv^{L}_{Ab} a_{0}$, $a_{*} \equiv^{L}_{Ac} a_{1}$, and $a_{*} \ind^{K}_{A} bc$.  \cite[Theorem 5.8]{dobrowolski2019independence}
\item $T_{A}$ is $G$-compact for any small set $A$, where $T_{A}$ is the theory of the monster model in the language with constants for the elements of $A$.  \cite[Corollary 5.9]{dobrowolski2019independence}
\end{enumerate}
\end{fact}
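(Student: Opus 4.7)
The Fact compiles six items, all cited verbatim from \cite{dobrowolski2019independence}, so my plan is to sketch the strategy one would use to establish them from scratch under the hypotheses NSOP$_{1}$ plus existence. The logical backbone is Kim's Lemma for Morley sequences (item (3)): once every Morley sequence in $\text{tp}(a/A)$ witnesses Kim-dividing of $\varphi(x;a)$ in the same way, the remaining items fall out by adaptations of the simple-theory proofs. I would therefore prove (3) first.

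Over a model $M$, the standard proof of Kim's Lemma uses a global $M$-invariant extension of $\text{tp}(a/M)$ together with an array argument relying on the NSOP$_{1}$ hypothesis. Over a general extension base $A$, invariant types need not exist, so the plan is to replace the invariant-type Morley sequence with a tree Morley sequence, built by transfinite induction that uses the existence axiom at each node to produce a non-forking extension with the required indiscernibility properties across branches. The main obstacle, and in my opinion the technical heart of the argument, is carrying out this tree construction correctly: each branch has to remain a genuine Morley sequence, the tree has to be sufficiently homogeneous to drive a Ramsey argument at the top, and one has to verify that the NSOP$_{1}$ array argument still yields the required consistency witness in the absence of any invariant type.

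With Kim's Lemma in place, extension (1) follows because for a Kim-non-dividing partial type $\pi$ one can build a Morley sequence of realizations of $\pi$ along which consistency of the relevant formulas is preserved, then extract a completion. The chain condition (4) is a Ramsey-plus-Kim's-Lemma corollary: starting from $a \ind^{K}_{A} b$ and a Morley sequence $I = \langle b_{i} : i < \omega \rangle$ in $\text{tp}(b/A)$, one finds a conjugate $a' \equiv_{Ab} a$ making $I$ indiscernible over $Aa'$ and still Kim-independent of it. The independence theorem for Lascar strong types (5) I would prove by assembling the three given Kim-independent data into a tree Morley sequence realizing the required types on appropriate initial segments, with NSOP$_{1}$ forcing Lascar-equivalent compatibility in the limit. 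Symmetry (2) then drops out by combining extension, Kim's Lemma, and the independence theorem in the standard way. Finally, $G$-compactness (6) of $T_{A}$ follows from a by-now-standard abstract argument: the presence of an independence relation over $A$ satisfying the independence theorem for Lascar strong types forces Lascar and KP strong types to coincide over $A$.
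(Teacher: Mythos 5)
You should first note that the paper itself offers no argument for this Fact: all six items are imported verbatim from \cite{dobrowolski2019independence}, with item-by-item citations, so there is no in-paper proof to match, and a faithful ``proof'' here is really a reconstruction of that paper's development. Your sketch does broadly follow that development at the start: Kim's Lemma for (non-forking) Morley sequences is indeed the backbone, proved by adapting the Kaplan--Ramsey argument with the existence axiom supplying Morley sequences in place of global invariant types, and extension, the chain condition, and $G$-compactness (the last via the independence theorem for Lascar strong types) sit downstream of it essentially as you describe.

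The genuine gap is your ordering of (2) and (5). You propose to prove the independence theorem before symmetry and then have symmetry ``drop out by combining extension, Kim's Lemma, and the independence theorem in the standard way.'' There is no such standard derivation, and in the cited development the dependency runs the other way: symmetry (Corollary 4.9 of \cite{dobrowolski2019independence}) is obtained from the weak tree Morley sequence machinery \textemdash{} one builds, by the tree induction using existence, a weak tree Morley sequence in $\mathrm{tp}(a/A)$ indiscernible over $Ab$ and applies Kim's Lemma for weak tree Morley sequences \textemdash{} and the proof of the independence theorem (Theorem 5.8 there) uses symmetry repeatedly (e.g.\ to move the independence $b \ind^{K}_{A} c$ around and to apply the chain condition on both sides). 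So as written your plan is circular at exactly the step you dismiss as routine: either you must supply a symmetry-free proof of the independence theorem, which you do not indicate and which is not available in the literature, or you must prove symmetry first by the tree argument, which your sketch reserves only for Kim's Lemma. Relatedly, your description of the proof of (5) (``assembling the data into a tree Morley sequence \ldots with NSOP$_{1}$ forcing Lascar-equivalent compatibility in the limit'') is too vague to certify; the actual argument goes through extension for Lascar strong types, the chain condition, and symmetry, not through a single tree-limit construction.
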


As these facts make up part of the standard tool box for reasoning about Kim-independence, we will often make implicit use of these properties. For example, Kim's Lemma for Morley sequences, Item (3) in the above list, is often used in this paper in the following way:  if $I = \langle a_{i} : i < \omega \rangle$ is a Morley sequence over $A$ with $a_{0} = a$ which is $Ab$-indiscernible, then $a \ind^{K}_{A} b$.  To see this, by symmetry (Item (2)), it suffices to show that $b \ind^{K}_{A} a$ which, by Item (4), means that we need to show that there is no formula  $\varphi(x;a) \in \text{tp}(b/Aa)$ which Kim-divides over $A$.  But if $\varphi(x;a)$ Kim-divides over $A$, then Kim's lemma implies that $\{\varphi(x;a_{i}) : i < \omega\}$ is inconsistent.  This set of formulas, however, is realized by $b$ so there can be no such formula.  

The following is local character of Kim-independence for NSOP$_{1}$ theories.  The usual formulation of local character for non-forking independence in simple theories merely asserts that, for any type $p \in S(A)$, the set of $B$ with $|B| \leq |T|$ such that $p$ does not fork over $B$ is \emph{non-empty}, but it follows by base monotonicity, then, that $p$ does not fork over $C$ for any $B \subseteq C \subseteq A$.  Because Kim-independence, in general, does not satisfy base monotonicity in NSOP$_{1}$ theories, the following is the appropriate analogue for this setting:  

\begin{fact} \label{local character} \cite[Theorem 3.9]{kaplan2019local}
Suppose $T$ is NSOP$_{1}$ and $M \models T$ with $|M| \geq |T|$.  Given any $p \in S(M)$ (in finitely many variables), the set $X$ defined by 
$$
X := \{N \prec M : |N| = |T| \text{ and } p \text{ does not Kim-divide over }N\}
$$
satisfies the following:
\begin{enumerate}
\item $X$ is closed:  if $\langle N_{i} : i < |T| \rangle$ is a sequence of models in $X$ with $N_{i} \subseteq N_{j}$ for all $i < j$, then $\bigcup_{i < |T|} N_{i} \in X$.  
\item $X$ is unbounded:  if $Y \subset M$ has cardinality $\leq |T|$, there is some $N \in X$ with $Y \subseteq N$.  
\end{enumerate}
\end{fact}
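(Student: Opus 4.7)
The plan is to establish closedness and unboundedness separately, with unboundedness carrying essentially all of the content.

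For closedness, take an increasing chain $\langle N_i : i < |T|\rangle$ in $X$ with $N^{*} = \bigcup_{i} N_i$, and suppose for contradiction that some $\varphi(x;a) \in p$ Kim-divides over $N^{*}$. For each $i$, the type $p$ does not Kim-divide over $N_i$, so by Fact \ref{basic Kim-independence facts}(3) there is a Morley sequence over $N_i$ beginning with $a$ on which $\{\varphi(x;a_j) : j<\omega\}$ is consistent. Iteratively applying the extension property (Fact \ref{basic Kim-independence facts}(1)) along the chain and extracting an indiscernible subsequence should produce a Morley sequence over $N^{*}$ beginning with $a$ on which these instances remain consistent; Fact \ref{basic Kim-independence facts}(3) then rules out Kim-dividing over $N^{*}$.

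For unboundedness, assume toward a contradiction that there is $Y \subseteq M$ with $|Y| \leq |T|$ such that $p$ Kim-divides over every $N \prec M$ of size $|T|$ with $Y \subseteq N$. Build a continuous increasing chain $\langle N_\alpha : \alpha < |T|^{+}\rangle$ of elementary submodels of $M$ of size $|T|$ with $Y \subseteq N_0$, and for each $\alpha$ choose $\varphi_\alpha(x;c_\alpha) \in p$ Kim-dividing over $N_\alpha$. A pigeonhole on formulas yields a single $\varphi(x;y)$ and a cofinal subchain along which $\varphi_\alpha = \varphi$; a standard Erd\H{o}s--Rado extraction then produces an indiscernible sequence $\langle c_i : i <\omega\rangle$ with each $\varphi(x;c_i)$ Kim-dividing over a fixed base and with the $c_i$ distributed cofinally through the original chain.

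The main obstacle is the next step: turning this data into a witness to SOP$_{1}$. I would follow the blueprint of the tree Morley sequence constructions of \cite{kaplan2019transitivity}, inductively building $\langle b_\eta : \eta \in 2^{<\omega}\rangle$ so that the branches realize a Morley sequence in a common Lascar strong type while $\{\varphi(x;b_\nu), \varphi(x;b_{\eta \frown \langle 1\rangle})\}$ is inconsistent whenever $\nu \unrhd \eta \frown \langle 0 \rangle$. The ingredients are the chain condition (Fact \ref{basic Kim-independence facts}(4)) to preserve indiscernibility when inserting new nodes, the independence theorem for Lascar strong types (Fact \ref{basic Kim-independence facts}(5)) to amalgamate two subtrees over the base, and the Kim-dividing witnesses at each $c_\alpha$ to force inconsistency across incomparable nodes; the $G$-compactness of $T_A$ from Fact \ref{basic Kim-independence facts}(6) will be needed to align Lascar types inside the tree. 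The delicate bookkeeping --- coordinating the sequence $N_\alpha$, the independence-theorem amalgamations, and the preservation of Kim-independence from already-constructed parameters in the absence of base monotonicity --- is where the tree-induction formalism, replacing the usual coheir arguments available only over models, is essential. The resulting tree satisfies Definition \ref{sop1def}, contradicting NSOP$_{1}$.
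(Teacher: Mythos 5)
This statement is not proved in the paper at all: it is imported as a Fact from \cite{kaplan2019local}, so there is no internal argument to compare yours with, and note also that the Fact is stated for NSOP$_{1}$ theories without assuming existence, whereas essentially every tool you invoke (Fact \ref{basic Kim-independence facts}) lives in the existence framework. Judged on its own, your proposal has genuine gaps at exactly the two places that carry the content of the theorem. For closedness, the step ``iteratively applying the extension property along the chain and extracting an indiscernible subsequence should produce a Morley sequence over $N^{*}$ beginning with $a$ on which the instances remain consistent'' is not an argument. Kim's lemma over each $N_{i}$ only controls Morley sequences over $N_{i}$; since base monotonicity fails in the relevant direction, a Morley sequence over $N^{*}=\bigcup_{i}N_{i}$ need not be a Morley sequence over any $N_{i}$ (nor conversely), so consistency along sequences over the $N_{i}$ transfers to nothing over $N^{*}$, and extension (which extends Kim-nondividing types) does not upgrade the base of a nonforking Morley sequence. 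Producing one Morley sequence over $N^{*}$ starting with $a$ along which $\varphi$ stays consistent is precisely what has to be proved here, and your sketch assumes it.

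For unboundedness, the chain-and-pigeonhole setup is fine (it mirrors the manipulations in Proposition \ref{transfer to models}), but two subsequent steps are unsupported. First, after Erd\H{o}s--Rado you assert an indiscernible sequence with each $\varphi(x;c_{i})$ ``Kim-dividing over a fixed base'': Kim-dividing over $N_{\alpha}$ is a condition on a base of size $|T|$ that grows with $\alpha$ and is not preserved by extracting indiscernibles over a fixed small set; at minimum you must carry along a realization $d\models p$ and, for each $\alpha$, a witnessing Morley sequence in $\mathrm{tp}(c_{\alpha}/N_{\alpha})$ with its $k$-inconsistency, and say over which base indiscernibility is extracted. Second, and decisively, the passage from this configuration to an SOP$_{1}$ tree is exactly the theorem of \cite{kaplan2019local}; you explicitly stop at ``the main obstacle,'' listing ingredients (chain condition, independence theorem, $G$-compactness) without the construction. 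In particular you never say how consistency along branches of the tree is obtained (this requires repeatedly amalgamating copies of $d$ via the independence theorem, whose hypotheses --- three pairwise Kim-independences and matching Lascar strong types --- must be verified at each stage without base monotonicity), nor how Kim-dividing of $\varphi(x;c_{\alpha})$ over $N_{\alpha}$, which is only $k$-inconsistency along Morley sequences over $N_{\alpha}$, becomes the pairwise inconsistency $\{\varphi(x;a_{\nu}),\varphi(x;a_{\eta\frown\langle 1\rangle})\}$ demanded by Definition \ref{sop1def}. As written, the proposal reduces the Fact to ``follow the blueprint of \cite{kaplan2019transitivity},'' i.e.\ it defers the essential argument rather than giving it.
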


\begin{rem}
It is an easy consequence of Fact \ref{local character} that if $M \models T$ is equal to the union of $\langle N_{i} : i < |T|^{+}\rangle$, an increasing and continuous (i.e.~$N_{\delta} = \bigcup_{i < \delta} N_{i}$ for all limit $\delta$) elementary chain of models of $T$ of size $|T|$, then for any $p \in S(M)$, there is some $i < |T|^{+}$ such that $p$ does not Kim-divide over $N_{i}$. 
\end{rem}

\subsection{Trees}

At several points in the paper, we will construct indiscernible sequences by an inductive construction of indiscernible trees.  We recall the basic framework for these `tree-inductions' from \cite{kaplan2017kim}.  For an ordinal $\alpha$, let the language $L_{s,\alpha}$ be $\langle \unlhd, \wedge, <_{lex}, (P_{\beta})_{\beta \leq \alpha} \rangle$.  For us, a tree will mean a partial order $\unlhd$ such that for all $x$, the elements $\{y : y \unlhd x\}$ below $x$ are linearly ordered (and not necessarily well-ordered) by $\unlhd$ and such that for all $x,y$, $x$ and $y$ have an infimum, i.e. there is a $\unlhd$-greatest element $z \unlhd x,y$, which is called the meet of $x$ and $y$.  We may view a tree with $\alpha$ levels as an $L_{s,\alpha}$-structure by interpreting $\unlhd$ as the tree partial order, $\wedge$ as the binary meet function, $<_{lex}$ as the lexicographic order, and $P_{\beta}$ interpreted to define level $\beta$. The specific trees, and the interpretations of these symbols that turn them into $L_{s,\alpha}$-structures, that we will need in this paper are outlined precisely in Definition \ref{tee alphas} below. 

We now recall the modeling property.  In what follows, we will write $\mathrm{qftp}_{L'}(a)$ to denote the quantifier-free type of $a$ in the language $L'$ and write $\text{tp}_{\Delta}(b/A)$ to denote the $\Delta$-type of $b$ over $A$ (i.e. the set of positive and negative instances of formulas in $\Delta$ with parameters from $A$ satisfied by $b$).  Although the subscript is used in two conflicting ways, it will be clear from context which is intended. 

\begin{defn}  Suppose $I$ is an $L'$-structure, where $L'$ is some language. 
\begin{enumerate}
\item  We say $(a_{i} : i \in I)$ is a set of $I$\emph{-indexed indiscernibles over $A$} if whenever 

$(s_{0}, \ldots, s_{n-1})$, $(t_{0}, \ldots, t_{n-1})$ are tuples from $I$ with 
$$
\text{qftp}_{L'}(s_{0}, \ldots, s_{n-1}) = \text{qftp}_{L'}(t_{0}, \ldots, t_{n-1}),
$$
then we have
$$
\text{tp}(a_{s_{0}},\ldots, a_{s_{n-1}}/A) = \text{tp}(a_{t_{0}},\ldots, a_{t_{n-1}}/A).
$$
\item In the case that $L' = L_{s,\alpha}$ for some $\alpha$, we say that an $I$-indexed indiscernible is $\emph{s-indiscernible}$.  As the only $L_{s,\alpha}$-structures we will consider will be trees, we will often refer to $I$-indexed indiscernibles in this case as \emph{s-indiscernible trees}.  
\item We say that $I$-indexed indiscernibles have the \emph{modeling property} if, given any $(a_{i} : i \in I)$ from $\mathbb{M}$ and any $A$, there is an \(I\)-indexed indiscernible \((b_{i} : i \in I)\) over $A$ in $\mathbb{M}$ \emph{locally based} on \((a_{i} : i \in I)$ over $A$.  That is, given any finite set of formulas \(\Delta\) from $L(A)$ and a finite tuple \((t_{0}, \ldots, t_{n-1})\) from \(I\), there is a tuple \((s_{0}, \ldots, s_{n-1})\) from \(I\) so that 
\[
\text{qftp}_{L'} (t_{0}, \ldots, t_{n-1}) =\text{qftp}_{L'}(s_{0}, \ldots , s_{n-1})
\]
and also 
\[
\text{tp}_{\Delta}(b_{t_{0}}, \ldots, b_{t_{n-1}}) = \text{tp}_{\Delta}(a_{s_{0}}, \ldots, a_{s_{n-1}}).
\]
\end{enumerate}
\end{defn}

\begin{fact}\cite[Theorem 4.3]{KimKimScow}\label{modeling}
Let  \(I_{s}\) denote the \(L_{s,\omega}\)-structure 
$$
I_{s} = (\omega^{<\omega}, \unlhd, <_{lex}, \wedge, (P_{\alpha})_{\alpha < \omega})
$$
with all symbols being given their intended interpretations and each \(P_{\alpha}\) naming the elements of the tree at level \(\alpha\).  Then \(I_{s}\)-indexed indiscernibles have the modeling property.  
\end{fact}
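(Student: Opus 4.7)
The modeling property for $I_s$-indexed indiscernibles is proved, as for classical linearly-ordered indiscernibles, by combining a compactness argument with a Ramsey theorem, except that one must use a tree-Ramsey theorem for $\omega^{<\omega}$ in place of the usual Ramsey theorem for $\omega$.

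First I would write down the partial $L(A)$-type $\Sigma((x_i)_{i \in \omega^{<\omega}})$ which simultaneously expresses both conclusions: (a) \emph{s-indiscernibility}: whenever $(\bar s)$ and $(\bar t)$ are finite tuples from $\omega^{<\omega}$ with $\mathrm{qftp}_{L_{s,\omega}}(\bar s) = \mathrm{qftp}_{L_{s,\omega}}(\bar t)$, the full $L(A)$-types $\mathrm{tp}(x_{\bar s}/A)$ and $\mathrm{tp}(x_{\bar t}/A)$ agree; and (b) \emph{local basing on $(a_i)$}: for every finite $\Delta \subseteq L(A)$ and tuple $(\bar t)$, the $\Delta$-type of $(x_{\bar t})$ is realized by some $(a_{\bar s})$ with $\mathrm{qftp}_{L_{s,\omega}}(\bar s) = \mathrm{qftp}_{L_{s,\omega}}(\bar t)$. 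Any realization of $\Sigma$ is the desired $(b_i)$, so by compactness it suffices to verify that every finite fragment of $\Sigma$ is consistent.

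The substantive combinatorial input is a tree-Ramsey theorem of the following form: for any finite coloring of the $n$-tuples from $\omega^{<\omega}$ realizing a fixed quantifier-free $L_{s,\omega}$-type $p$, there exists an embedding $\iota : \omega^{<\omega} \hookrightarrow \omega^{<\omega}$ preserving $\unlhd$, $\wedge$, $<_{lex}$, and (after a level-relabeling) the predicates $P_\alpha$, such that the coloring is constant on the $p$-realizing $n$-tuples inside $\mathrm{image}(\iota)$. This is a consequence of Milliken's theorem on strong subtrees (itself resting on the Halpern--L\"auchli theorem). Given a finite fragment of $\Sigma$ involving finitely many $\Delta$'s and finitely many quantifier-free types $p_1,\ldots,p_k$, one applies this theorem iteratively in $k$ stages, at each stage thinning to a subtree of $(a_i)$ on which the $\Delta$-type coloring attached to $p_j$ has been made constant. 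The resulting subtree of $(a_i)$, reindexed via the composed embeddings, realizes the finite fragment; compactness then concludes.

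The main obstacle is matching the combinatorial notion of a strong subtree to an honest $L_{s,\omega}$-embedding onto a copy of $\omega^{<\omega}$. A strong subtree in Milliken's sense automatically preserves meets and the lexicographic order, but it lives on thinned level sets and need not, a priori, respect the level predicates $P_\alpha$ on the nose; one must therefore either pass to a strong subtree of the ``correct shape'' or relabel levels to get a genuine $L_{s,\omega}$-isomorphism with $\omega^{<\omega}$, and verify that this relabeling respects all the quantifier-free types being tracked by the coloring. Handling this interface between the tree-Ramsey combinatorics and the language $L_{s,\omega}$ cleanly is the technical heart of the argument.
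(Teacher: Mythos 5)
This statement is quoted in the paper as a Fact, citing Theorem 4.3 of Kim--Kim--Scow; the paper itself contains no proof, so your proposal can only be measured against the cited argument and the standard literature. Your overall frame is the right one: write down the partial type $\Sigma$ expressing s-indiscernibility together with local basedness, and reduce, via compactness, to a partition statement about the index structure. That much matches the known proofs.

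The gap is in the partition theorem you invoke. Milliken's strong subtree theorem homogenizes only after thinning the level set, whereas everything in the statement to be proved is formulated in terms of quantifier-free $L_{s,\omega}$-types, which name \emph{absolute} levels via the predicates $P_{\alpha}$: local basedness demands a witness $\bar{s}$ with the same quantifier-free type as $\bar{t}$, hence at exactly the same levels. Tuples drawn from a level-thinned strong subtree realize the wrong $P_{\alpha}$'s, so they do not verify the clauses of $\Sigma$, and the ``level-relabeling'' you defer cannot repair this, since relabeling changes precisely the predicates that the types track. In fact your key homogenization claim, read literally, is either useless or false: if $\iota$ is a strong subtree with thinned levels, the ambient $p$-realizing tuples inside its image are the wrong ones (or there are none), while if you insist that $\iota$ be a genuine $L_{s,\omega}$-embedding of $\omega^{<\omega}$ into itself (hence level-preserving), the statement fails. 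For a concrete counterexample, color the pairs $\left(\langle i,j\rangle, \langle i',j'\rangle\right)$ with $i<i'$ (two level-$2$ nodes meeting at the root, in lexicographic order) according to whether $j<j'$; any level-preserving copy of $\omega^{<\omega}$ selects infinite sets $T_{i}\subseteq\omega$ of children above each chosen level-$1$ node, and with $T_{i},T_{i'}$ infinite both colors occur, so no such copy is homogeneous even for this single pattern. Consequently the homogenization must be carried out fragment-by-fragment (level-preserving monochromatic copies of \emph{finite} shapes only), and this level-exact finite partition statement is not a consequence of Milliken's theorem; in the cited work it is obtained by a different induction on the height of the tree, using Ramsey's theorem and compactness throughout. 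Milliken's theorem is instead the natural tool for the coarser notion of \emph{strongly} indiscernible trees, where levels are compared but not named, which is a separate result.
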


Our trees will be understood to be an $L_{s,\alpha}$-structure for some appropriate $\alpha$.  As in \cite{kaplan2017kim}, we introduce a distinguished class of trees $\mathcal{T}_{\alpha}$.

\begin{defn} \label{tee alphas}
Suppose $\alpha$ is an ordinal.  We define $\mathcal{T}_{\alpha}$ to be the set of functions $f$ such that 
\begin{itemize}
\item $\text{dom}(f)$ is an end-segment of $\alpha$ of the form $[\beta,\alpha)$ for $\beta$ equal to $0$ or a successor ordinal.  If $\alpha$ is a successor, we allow $\beta = \alpha$, i.e.~$\text{dom}(f) = \emptyset$.
\item $\text{ran}(f) \subseteq \omega$.
\item finite support:  the set $\{\gamma \in \text{dom}(f) : f(\gamma) \neq 0\}$ is finite.    
\end{itemize}
We interpret $\mathcal{T}_{\alpha}$ as an $L_{s,\alpha}$-structure by defining 
\begin{itemize}
\item $f \unlhd g$ if and only if $f \subseteq g$.  Write $f \perp g$ if $\neg(f \unlhd g)$ and $\neg(g \unlhd f)$.  
\item $f \wedge g = f|_{[\beta, \alpha)} = g|_{[\beta, \alpha)}$ where $\beta = \text{min}\{ \gamma : f|_{[\gamma, \alpha)} =g|_{[\gamma, \alpha)}\}$, if non-empty (note that $\beta$ will not be a limit, by finite support). Define $f \wedge g$ to be the empty function if this set is empty (note that this cannot occur if $\alpha$ is a limit).  
\item $f <_{lex} g$ if and only if $f \vartriangleleft g$ or, $f \perp g$ with $\text{dom}(f \wedge g) = [\gamma +1,\alpha)$ and $f(\gamma) < g(\gamma)$
\item For all $\beta \leq \alpha$, $P_{\beta} = \{ f \in \mathcal{T}_{\alpha} : \text{dom}(f) = [\beta, \alpha)\}$.  Note that $P_{0}$ are the leaves of the tree (i.e.~the \emph{top} level) and $P_{\alpha}$ is empty for $\alpha$ limit.  
\end{itemize}
\end{defn}

Fact \ref{modeling} and compactness can be used to show that $\mathcal{T}_{\alpha}$-indexed indiscernibles have the modeling property as well \cite[Corollary 5.6]{kaplan2017kim}. 

\begin{defn}
Suppose $\alpha$ is an ordinal.  
\begin{enumerate}
\item (Restriction) If $v \subseteq \alpha$, the \emph{restriction of} $\mathcal{T}_{\alpha}$ \emph{to the set of levels  }$v$ is the $L_{s,\alpha}$-substructure of $\mathcal{T}_{\alpha}$ with the following underlying set:
$$
\mathcal{T}_{\alpha} \upharpoonright v = \{\eta \in \mathcal{T}_{\alpha} : \min (\text{dom}(\eta)) \in v \text{ and }\beta \in \text{dom}(\eta) \setminus v \implies \eta(\beta) = 0\}.
$$
\item (Concatenation)  If $\eta \in \mathcal{T}_{\alpha}$, $\text{dom}(\eta) = [\beta+1,\alpha)$ for $\beta$ non-limit, and $i < \omega$, let $\eta \frown \langle i \rangle$ denote the function $\eta \cup \{(\beta,i)\}$.  We define $\langle i \rangle \frown \eta \in \mathcal{T}_{\alpha+1}$ to be $\eta \cup \{(\alpha,i)\}$.  We write $\langle i \rangle$ for $\emptyset \frown \langle i \rangle$.
\item (Canonical inclusions) If $\alpha < \beta$, we define the map $\iota_{\alpha \beta} : \mathcal{T}_{\alpha} \to \mathcal{T}_{\beta}$ by $\iota_{\alpha \beta}(f) := f \cup \{(\gamma, 0) : \gamma \in \beta \setminus \alpha\}$.
\item (The all $0$'s path) If $\beta < \alpha$, then $\zeta_{\beta}$ denotes the function with $\text{dom}(\zeta_{\beta}) = [\beta, \alpha)$ and $\zeta_{\beta}(\gamma) = 0$ for all $\gamma \in [\beta,\alpha)$.  This defines an element of $\mathcal{T}_{\alpha}$ if and only if $\beta \in \{\gamma \in\alpha\mid  \gamma \mbox{ is not limit}\}=:[\alpha]$.  
\item (Tuple notation) Given $\nu \in \mathcal{T}_{\alpha}$, we write $a_{\unrhd \nu}$ for the tuple enumerating $\{a_{\xi} : \nu \unlhd \xi \in \mathcal{T}_{\alpha}\}$.
\end{enumerate}
\end{defn}

In previous works on Kim-independence over arbitrary sets, there was a gap concerning the construction of Morley trees (and a parallel gap in the theory over models), first discovered by Jan Dobrowolski and Mark Kamsma.  Namely, there is no reason \emph{a priori} for an $s$-indiscernible tree locally based on a weakly spread out tree (see Definition \ref{weaklyspread}) to be weakly spread out, which is needed to continue the induction. Over models this has a very easy fix:  one can check that it is possible to choose a global $M$-invariant type $q \supseteq \text{tp}((a_{\eta})_{\eta \in \mathcal{T}_{\alpha}}/M)$ such that, if $(a'_{\eta})_{\eta \in \mathcal{T}_{\alpha}} \models q$ then $(a'_{\eta})_{\eta \in \mathcal{T}_{\alpha}}$ is $\mathbb{M}$-indiscernible.  Morley sequences in such types allow the argument to work without change (the details for this case will appear elsewhere).  But over sets, a lengthier argument is required to patch the proofs. The relevant notion for the modification is that of a \emph{mutually s-indiscernible sequence}.  We prove in Lemma \ref{Morley mutual} that, given an $s$-indiscernible tree, there is a Morley sequence starting with this tree which is mutually s-indiscernible, and then we show in Lemma \ref{mutual preservation} that this notion is preserved upon passage to an $s$-indiscernible tree. 

\begin{defn}
We say a sequence $\langle (a_{\eta,i})_{\eta \in \mathcal{T}_{\alpha}} : i < \kappa \rangle$ is \emph{mutually} $s$\emph{-indiscernible over} $A$ if, for all $i < \kappa$, $(a_{\eta,i})_{\eta \in \mathcal{T}_{\alpha}}$ is $s$-indiscernible over $A\{a_{\eta,j} : \eta \in \mathcal{T}_{\alpha}, j \neq i \}$. 
\end{defn}

\begin{lem} \label{Morley mutual} 
Assume $A$ is an extension base. Given a tree $(a_{\eta})_{\eta \in \mathcal{T}_{\alpha}}$ that is $s$-indiscernible over $A$, there is a sequence $I = \langle (a_{\eta,i})_{\eta \in \mathcal{T}_{\alpha}} : i < \omega \rangle$ such that $(a_{\eta,0})_{\eta \in \mathcal{T}_{\alpha}} = (a_{\eta})_{\eta \in \mathcal{T}_{\alpha}}$, $I$ is a Morley sequence over $A$, and $I$ is mutually $s$-indiscernible over $A$.
\end{lem}

\begin{proof}
Let $\kappa$ be sufficiently large with respect to $|A|$.  By induction on $\gamma < \kappa$, we will choose $(a_{\eta,\gamma})_{\eta \in \mathcal{T}_{\alpha}}$ such that, taking $I_{\gamma}$ to be the sequence $\langle (a_{\eta,i})_{\eta \in \mathcal{T}_{\alpha}} : i < \gamma \rangle$, we have that $I_{\gamma}$ starts with $(a_{\eta})_{\eta \in \mathcal{T}_{\alpha}}$, is mutually $s$-indiscernible over $A$, and satisfies 
$$
(a_{\eta,i})_{\eta \in \mathcal{T}_{\alpha}} \ind_{A} (a_{\eta,j})_{\eta \in \mathcal{T}_{\alpha}, j < i}
$$
for all $i < \gamma$. The sequence $I_{1}$ is already specified and trivially satisfies the requirements.  

Assume we are given $(a_{\eta,i})_{\eta \in \mathcal{T}_{\alpha}}$ for all $i < \gamma$ and set $I_{\gamma} = \langle (a_{\eta,i})_{\eta \in \mathcal{T}_{\alpha}} : i < \gamma \rangle$.  Apply extension to get some $(b_{\eta})_{\eta \in \mathcal{T}_{\alpha}} \equiv_{A} (a_{\eta})_{\eta \in \mathcal{T}_{\alpha}}$ such that 
$$
(b_{\eta})_{\eta \in \mathcal{T}_{\alpha}} \ind_{A} I_{\gamma}. 
$$
By the modeling property, we can take $(b'_{\eta})_{\eta \in \mathcal{T}_{\alpha}}$ to be locally based on $(b_{\eta})_{\eta \in \mathcal{T}_{\alpha}}$ and $s$-indiscernible over $AI_{\gamma}$, then we we still have $(b'_{\eta})_{\eta \in \mathcal{T}_{\alpha}} \equiv_{A} (a_{\eta})_{\eta \in \mathcal{T}_{\alpha}}$, as $(a_{\eta})_{\eta \in \mathcal{T}_{\alpha}}$ was assumed to be $s$-indiscernible over $A$, and local basedness and strong finite character of non-forking imply 
$$
(b'_{\eta})_{\eta \in \mathcal{T}_{\alpha}} \ind_{A} I_{\gamma}.
$$ 
Now by induction on $i < \gamma$, we will choose $(a'_{\eta,i})_{\eta \in \mathcal{T}_{\alpha}}$ satisfying the following conditions:
\begin{enumerate}
    \item $(a'_{\eta,i})_{\eta \in \mathcal{T}_{\alpha}}$ is $s$-indiscernible over
    $$
    A \cup \{a'_{\eta,j} : \eta \in \mathcal{T}_{\alpha}, j < i\} \cup \{a_{\eta,k} : \eta \in \mathcal{T}_{\alpha}, k > i\} \cup \{b'_{\eta} : \eta \in \mathcal{T}_{\alpha}\}.
    $$
    \item $(b'_{\eta})_{\eta \in \mathcal{T}_{\alpha}}$ is $s$-indiscernible over 
    $$
    A \cup \{a'_{\eta,j} : \eta \in \mathcal{T}_{\alpha}, j \leq i\} \cup \{a_{\eta,k} : \eta \in \mathcal{T}_{\alpha}, k > i\}.
    $$
    \item $(a'_{\eta,j})_{\eta \in \mathcal{T}_{\alpha}, j \leq i} (a_{\eta,k})_{\eta \in \mathcal{T}_{\alpha}, k > i} \equiv_{A} (a_{\eta,j})_{\eta \in \mathcal{T}_{\alpha}, j \leq i} (a_{\eta,k})_{\eta \in \mathcal{T}_{\alpha}, k > i}$.
    \item The following independence holds: 
    $$
    (b'_{\eta})_{\eta \in \mathcal{T}_{\alpha}} \ind_{A} (a'_{\eta,j})_{\eta \in \mathcal{T}_{\alpha}, j \leq i} (a_{\eta,k})_{\eta \in \mathcal{T}_{\alpha}, k > i}
    $$
\end{enumerate}
Fix $i < \gamma$ and suppose we have chosen $(a'_{\eta,j})_{\eta \in \mathcal{T}_{\alpha}}$ for all $j < i$. Pick $(a'_{\eta,i})_{\eta \in \mathcal{T}_{\alpha}}$ $s$-indiscernible over $A \cup \{a'_{\eta,j} : \eta \in \mathcal{T}_{\alpha}, j < i\} \cup \{a_{\eta,k} : \eta \in \mathcal{T}_{\alpha}, k > i\} \cup \{b'_{\eta} : \eta \in \mathcal{T}_{\alpha}\}$ and locally based on $(a_{\eta,i})_{\eta \in \mathcal{T}_{\alpha}}$. Then (1) is satisfied and (2) is easy to check using local basedness and the inductive assumption.  We assumed $I_{\gamma}$ was mutually $s$-indiscernible over $A$ and hence by (3) of the inductive hypothesis, we know that $(a_{\eta,i})_{\eta \in \mathcal{T}_{\alpha}}$ is $s$-indiscernible over 
$$
A \cup \{a'_{\eta,j} : \eta \in \mathcal{T}_{\alpha}, j < i\} \cup \{a_{\eta,k} : \eta \in \mathcal{T}_{\alpha}, k > i\}
$$
and therefore $(a'_{\eta,i})_{\eta \in \mathcal{T}_{\alpha}}$ has the same type over this set, which establishes (3).  Finally, (4) follows by local basedness, (3), and the invariance of non-forking independence. More explicitly, suppose there is a finite tuple $b$ from $(b'_{\eta})_{\eta \in \mathcal{T}_{\alpha}}$, a finite tuple $a$ from $\{a'_{\eta,j} : j < i\} \cup\{a_{\eta,k} : k > i\}$, and a finite tuple $\overline{\eta}$ from $\mathcal{T}_{\alpha}$ such that 
$$
\models \varphi(b;a'_{\overline{\eta},i},a)
$$
where $\varphi(x;y,z) \in L(A)$ is a formula such that $\varphi(x;a'_{\overline{\eta},i},a)$ forks over $A$. Local basedness entails that there is $\overline{\nu}$ with $\mathrm{qftp}_{L_{s,\alpha}}(\overline{\nu}) = \mathrm{qftp}_{L_{s,\alpha}}(\overline{\eta})$ such that 
$$
\models \varphi(b;a_{\overline{\nu},i},a).
$$
But by mutual $s$-indiscernibility, $a_{\overline{\nu},i} \equiv_{Aa} a_{\overline{\eta},i}$ and, by (3), $a_{\overline{\eta},i} \equiv_{Aa} a'_{\overline{\eta},i}$ and hence $\varphi(x;a_{\overline{\nu},i},a)$ forks over $A$ as well. This contradicts the inductive hypothesis that 
$$
    (b'_{\eta})_{\eta \in \mathcal{T}_{\alpha}} \ind_{A} (a'_{\eta,j})_{\eta \in \mathcal{T}_{\alpha}, j < i} (a_{\eta,k})_{\eta \in \mathcal{T}_{\alpha}, k \geq i}.
    $$
 This shows that our choice of $(a'_{\eta,i})_{\eta \in \mathcal{T}_{\alpha}}$ satisfies the requirements.  

 Having constructed our sequence $I'_{\gamma} = \langle (a'_{\eta,i})_{\eta \in \mathcal{T}_{\alpha}} : i < \gamma\rangle$, we have $I'_{\gamma} \equiv_{A} I_{\gamma}$ by (3) and $(b'_{\eta})_{\eta \in \mathcal{T}_{\alpha}}$ is $s$-indiscernible over $I'_{\gamma}$ by (2). Moreover, each $(a_{\eta,i})_{\eta \in \mathcal{T}_{\alpha}}$ is indiscernible over $A(b'_{\eta})_{\eta \in \mathcal{T}_{\alpha}}(a_{\eta,j})_{\eta \in \mathcal{T}_{\alpha},j \neq i}$ by (1). Finally, by (4), we have $(b'_{\eta})_{\eta \in \mathcal{T}_{\alpha}} \ind_{A} I'_{\gamma}$.  Choosing $(a_{\eta,\gamma})$ such that 
 $$
 I_{\gamma} (a_{\eta,\gamma})_{\eta \in \mathcal{T}_{\alpha}} \equiv_{A} I'_{\gamma} (b'_{\eta})_{\eta \in \mathcal{T}_{\alpha}}
 $$
 we arrive at $I_{\gamma+1}$.  There is nothing to do at limits, so we have succeeded in constructing our sequence $I_{\kappa}$.  Applying Erd\H{o}s-Rado to $I_{\kappa}$, then, we obtain the desired sequence $I$. 
\end{proof}

\begin{lem} \label{mutual preservation}
    Suppose $(a_{\eta})_{\eta \in \mathcal{T}_{\alpha+1}}$ is a tree of tuples such that $I = \langle a_{\unrhd \langle i \rangle} : i < \omega \rangle$ is mutually $s$-indiscernible and Morley over $A$. Then if $(a'_{\eta})_{\eta \in \mathcal{T}_{\alpha+1}}$ is $s$-indiscernible and locally based on $(a_{\eta})_{\eta \in \mathcal{T}_{\alpha+1}}$ over $A$ and $I' = \langle a'_{\unrhd \langle i \rangle}: i < \omega\rangle$, then $I' \equiv_{A} I$ and thus $I'$ is also mutually $s$-indiscernible and Morley over $A$. 
\end{lem}

\begin{proof}
Suppose $\overline{\eta}$ and $\overline{\nu}$ are tuples from $\mathcal{T}_{\alpha+1} \setminus \{\emptyset\}$ with $\mathrm{qftp}_{L_{s,\alpha+1}}(\overline{\eta}) = \mathrm{qftp}_{L_{s,\alpha+1}}(\overline{\nu})$.  After possibly reordering the tuples, there are $i_{0} < \ldots < i_{k-1}$ and $j_{0} < \ldots < j_{k-1}$ such that $\overline{\eta} = (\overline{\eta}_{0}, \ldots, \overline{\eta}_{k-1})$ and $\overline{\nu} = (\overline{\nu}_{0}, \ldots, \overline{\nu}_{k-1})$ where each $\overline{\eta}_{l}$ comes from the tree $\unrhd \langle i_{l} \rangle$ and $\overline{\nu}_{l}$ comes from the tree $\unrhd \langle j_{l} \rangle$ for $l < k$. Then, in particular, $\mathrm{qftp}_{L_{s,\alpha+1}}(\overline{\eta}_{l}) = \mathrm{qftp}_{L_{s,\alpha+1}}(\overline{\nu}_{l})$ for all $l < k$. Additionally, for all $l < k$, let $\overline{\eta}'_{l}$ be the element of the tree $\unrhd \langle j_{l} \rangle$ corresponding to $\overline{\eta}_{l}$ (i.e. replace each node $\langle i_{l}\rangle^{\frown} \xi$ enumerated in $\overline{\eta}_{l}$ with $\langle j_{l} \rangle^{\frown} \xi$). Because $I$ is an $A$-indiscernible sequence, we have 
$$
(a_{\overline{\eta}_{0}}, \ldots, a_{\overline{\eta}_{k-1}}) \equiv_{A} (a_{\overline{\eta}'_{0}}, \ldots, a_{\overline{\eta}_{k-1}'}). 
$$
Additionally, in the tree $\unrhd \langle j_{l} \rangle$ (naturally viewed as an $L_{s,\alpha}$-structure), we have $\mathrm{qftp}_{L_{s,\alpha}}(\overline{\eta}_{l}') = \mathrm{qftp}_{L_{s,\alpha}}(\overline{\eta}_{l})$ for all $l < k$.  Thus, mutual $s$-indiscernibility entails 
$$
(a_{\overline{\eta}'_{0}}, \ldots, a_{\overline{\eta}_{k-1}'}) \equiv_{A} (a_{\overline{\nu}_{0}}, \ldots, a_{\overline{\nu}_{k-1}}).
$$
Thus we have shown that $a_{\overline{\eta}} \equiv_{A} a_{\overline{\nu}}$.  Therefore, it follows, by local basedness, that $I \equiv_{A} I'$ and the result follows. 
\end{proof}

\begin{defn}\label{weaklyspread}
Suppose $(a_{\eta})_{\eta \in \mathcal{T}_{\alpha}}$ is a tree of tuples in $\mathbb{M}$, and $A$ is a set of parameters.  
\begin{enumerate}
\item We say $(a_{\eta})_{\eta \in \mathcal{T}_{\alpha}}$ is \emph{weakly spread out over} $A$ 
if for all $\eta \in \mathcal{T}_{\alpha}$ with $\text{dom}(\eta) =[\beta+1,\alpha)$ for
some $\beta \in [\alpha]$, the sequence of cones
$(a_{\unrhd \eta \frown \langle i \rangle})_{i < \omega}$ is a Morley sequence 
in $\mathrm{tp}(a_{\unrhd \eta \frown \langle 0 \rangle}/A)$.
\item Suppose $(a_{\eta})_{\eta \in \mathcal{T}_{\alpha}}$ is a tree which is weakly 
spread out and $s$-indiscernible over $A$ and for all pairs of finite subsets $w,v$ of $\alpha$ with $|w| = |v|$,
$$
(a_{\eta})_{\eta \in \mathcal{T}_{\alpha} \upharpoonright w} \equiv_{A} (a_{\eta})_{\eta \in \mathcal{T}_{\alpha} \upharpoonright v}
$$
then we say $(a_{\eta})_{\eta \in \mathcal{T}_{\alpha}}$ is a \emph{weakly Morley tree} over $A$.  
\item A \emph{weak tree Morley sequence} over $A$ is a $A$-indiscernible sequence of the form $(a_{\zeta_\beta})_{\beta \in [\alpha]}$ for some 
weakly Morley tree $(a_{\eta})_{\eta \in \mathcal{T}_{\alpha}}$ over $A$. More generally, we will say an $A$-indiscernible sequence $I$ is a weak tree Morley sequence over $A$ if it is EM-equivalent to a sequence of this form.  
\end{enumerate}
\end{defn}

\begin{rem}
If $I = \langle b_{i} : i < \omega \rangle$ is an $A$-indiscernible sequence and $I \equiv_{A} J$ for some weak tree Morley sequence $J$ over $A$, then $I$ is a weak tree Morley sequence over $A$.  In particular, if $I$ is a subsequence of $J$, by the $A$-indiscernibility of $J$, the sequence $I$ is also weak tree Morley over $A$.  
\end{rem}

\begin{fact} \label{slightly less basic Kim-independence facts}
Suppose $T$ is NSOP$_{1}$ with existence and $A$ is a set of parameters.
\begin{enumerate}
\item If $a \ind^{K}_{A} b$, there is an $Ab$-indiscernible sequence $I = \langle a_{i} : i < \omega \rangle$ over $A$ with $a_{0} = a$ such that $I$ is weak tree Morley over $A$. \cite[Lemma 4.7]{dobrowolski2019independence}
\item Kim's Lemma for weak tree Morley sequences:  the fomula $\varphi(x;a_{0})$ Kim-divides over $A$ if and only if $\{\varphi(x;a_{i}) : i < \omega\}$ is inconsistent for some weak tree Morley sequence $\langle a_{i} : i < \omega \rangle$ over $A$ if and only if $\{\varphi(x;a_{i}) : i < \omega\}$ is inconsistent for all weak tree Morley sequences $\langle a_{i} : i < \omega \rangle$ over $A$.  \cite[Corollary 4.8]{dobrowolski2019independence}
\item If $a \equiv^{L}_{A} b$ and $a \ind^{K}_{A} b$, there is an $\ind^{K}$-Morley sequence over $A$ starting with $(a,b)$ as its first two elements (follows from Fact \ref{basic Kim-independence facts} as in \cite[Corollary 6.6]{kaplan2017kim}).
\end{enumerate}
\end{fact}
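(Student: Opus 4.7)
For item (1), the plan is to run a tree induction analogous to the constructions of weakly Morley trees in \cite{kaplan2017kim}. Starting from $a \ind^{K}_{A} b$, I would inductively build a tree $(a_\eta)_{\eta \in \mathcal{T}_\alpha}$ over $A$ with $a_{\zeta_0} = a$, at each non-limit level invoking the extension property (Fact \ref{basic Kim-independence facts}(1)) and the existence axiom to produce Morley sequences of cones realizing the weak spread-out condition. After the tree is built, the modeling property (Fact \ref{modeling}) is applied over the enlarged base $Ab$ rather than over $A$, to extract an $s$-indiscernible tree locally based on the original; the all-zeros branch of this extracted tree is then the desired $Ab$-indiscernible weak tree Morley sequence beginning with $a$. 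This is the main obstacle of the three items: the delicate point is the application of modeling over the larger base $Ab$, where one must ensure that the $s$-indiscernible tree extracted still realizes weak spread-out over $A$, so the construction must be sufficiently rich in realizations over $Ab$ to support the modeling step without destroying the spread-out condition over the smaller base.

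For item (2), which is a Kim's-lemma-style biconditional for weak tree Morley sequences, the main content is the forward direction: that Kim-dividing of $\varphi(x;a_0)$ over $A$ forces inconsistency of $\{\varphi(x;a_i) : i < \omega\}$ along every weak tree Morley sequence starting at $a_0$. I would prove this by a tree induction along the weakly Morley tree underlying a given weak tree Morley sequence, invoking Kim's lemma for Morley sequences (Fact \ref{basic Kim-independence facts}(3)) on the cones at each non-limit level, and using weak spread-out to propagate inconsistency from one level to the next. The converse direction, from inconsistency along some weak tree Morley sequence to Kim-dividing, is handled by combining part (1) with a modeling/extraction argument: if $\varphi(x;a_0)$ did not Kim-divide, then by extension one finds a realization $b$ of $\varphi(x;a_0)$ with $b \ind^{K}_{A} a_0$, and part (1) produces a weak tree Morley sequence starting at $a_0$ that is $Ab$-indiscernible, on which $\{\varphi(x;a_i)\}$ is visibly consistent, contradicting the hypothesis after an EM-type comparison.

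For item (3), I would iterate the independence theorem for Lascar strong types (Fact \ref{basic Kim-independence facts}(5)). Starting from $a \equiv^{L}_{A} b$ with $a \ind^{K}_{A} b$, at each stage $n$ one maintains a tuple $a_{<n}$ with $a_0 = a$, $a_1 = b$, all $a_i \equiv^{L}_{A} a_0$, and $a_i \ind^{K}_{A} a_{<i}$, and applies the independence theorem to produce $a_n \equiv^{L}_{A} a_0$ with $a_n \ind^{K}_{A} a_{<n}$ realizing the amalgamated type. A standard Ramsey extraction, combined with an automorphism placing $a$ and $b$ at the first two entries of the extracted sequence, yields the required $A$-indiscernible $\ind^{K}$-Morley sequence starting with $(a,b)$.
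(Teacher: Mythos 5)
Your sketch for item (3) is essentially the intended argument (iterate the independence theorem for Lascar strong types so that every pair in the constructed sequence realizes $\mathrm{tp}(ab/A)$, then extract and move by an automorphism over $A$; note that preservation of $\ind^{K}$-Morleyness under extraction needs the type-definability of Lemma \ref{type definability of kim-independence}), and this matches the route the paper points to. Items (1) and (2), however, have genuine gaps. For (1), your construction never makes load-bearing use of $a \ind^{K}_{A} b$: the tree is built only with reference to $A$, and $b$ enters solely through one modeling extraction over $Ab$ at the end. The hypothesis is not decorative \textemdash\ by item (2) and symmetry, the mere existence of an $Ab$-indiscernible weak tree Morley sequence over $A$ starting at $a$ already implies $a \ind^{K}_{A} b$, so any proof must use it essentially. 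The final extraction cannot supply this: local basedness over $Ab$ only returns types over $Ab$ that are (approximately) realized by configurations in the original tree, and since that tree was built with no control of types over $Ab$, there is no reason its nodes, or the extracted all-zeros branch, realize $\mathrm{tp}(a/Ab)$, hence no automorphism over $Ab$ placing $a$ at the start; flagging the preservation of weak spread-outness as ``the delicate point'' names the problem without supplying a mechanism. The correct mechanism \textemdash\ visible in the paper's own Lemma \ref{good sequence} and Proposition \ref{extra good sequence}, the latter being a strengthening of item (1) \textemdash\ is to maintain invariants throughout the induction: every node realizes $\mathrm{tp}(a/Ab)$, the tree is $s$-indiscernible over $Ab$ (not merely over $A$), and the whole tree remains Kim-independent from $b$ over $A$; at each successor step one takes a Morley sequence over $A$ of copies of the tree and uses the chain condition (Fact \ref{basic Kim-independence facts}(4)) to realign $b$, with $a \ind^{K}_{A} b$ seeding the induction. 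In addition, modeling only yields $s$-indiscernibility; the ``weakly Morley'' homogeneity across finite sets of levels in Definition \ref{weaklyspread}(2) is obtained by building trees of large ordinal height and applying Erd\H{o}s--Rado, a step missing from your outline.

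For (2), the converse direction as you argue it is circular. From non-Kim-dividing you produce, via extension and item (1), \emph{one} weak tree Morley sequence starting at $a_{0}$ along which $\varphi$ is consistent; but the hypothesis gives inconsistency along some \emph{other} weak tree Morley sequence, and two weak tree Morley sequences over $A$ with the same first element need not have the same EM-type over $A$. The assertion that consistency of $\{\varphi(x;a_{i}) : i < \omega\}$ is independent of the choice of weak tree Morley sequence is exactly the uniformity the lemma asserts, so the ``EM-type comparison'' assumes what is to be proved. The known arguments instead show directly that non-Kim-dividing yields consistency along \emph{every} weak tree Morley sequence (working down the underlying weak Morley tree, using the Morley sequences of cones, the chain condition and the independence theorem), equivalently deriving Kim-dividing from inconsistency along a single such sequence. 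Likewise your forward direction is only a gesture: the standard proof does not ``propagate inconsistency from level to level'' but argues by contradiction, combining consistency along the branch with Kim's lemma applied to the sibling cone Morley sequences to assemble an SOP$_{1}$-style (path-collapse) configuration. Note also that the paper itself does not reprove this Fact; it cites Lemmas 4.7 and 4.8 of the Dobrowolski--Kim--Ramsey paper and Corollary 6.6 of Kaplan--Ramsey, whose proofs follow the invariant-maintaining tree inductions described above.
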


\subsection{Further properties of Kim-independence}

%\begin{lem}[Extension for Lascar strong types] \label{strong extension}
%Suppose $T$ is NSOP$_{1}$ with existence.  If $A$ is a set of parameters, $c$ is an arbitrary tuple, and $a \ind^{K}_{A} b$, then there is $a' \equiv^{L}_{Ab} a$ such that $a' \ind^{K}_{A} bc$.  
%\end{lem}
%
%\begin{proof}
%As $a \ind^{K}_{A} b$, by Fact \ref{slightly less basic Kim-independence facts}(1), there is a weak tree Morley sequence $\langle a_{i} : i < \omega \rangle$ over $A$ with $a_{0} = a$ which is, moreover, $Ab$-indiscernible.  Then, for any $L(Aa_{1})$-formula $\varphi(x,y;a_{1}) \in \mathrm{tp}(a_{0},b/Aa_{1})$, we know $(a_{0},b) \models \{\varphi(x,y;a_{i}) : i \geq 1\}$ by $A$-indiscernibility.  As $\langle a_{i} : i \geq 1\rangle$ is also a weak tree Morley sequence over $A$, we have that $\varphi(x,y;a_{1})$ does not Kim-divide over $A$ by Fact \ref{slightly less basic Kim-independence facts}(2), and hence $a_{1} \ind^{K}_{A} ab$ by symmetry. By extension, there is $a_{1}' \equiv_{Aab} a_{1}$ such that $a'_{1} \ind^{K}_{A} abc$.  Let $\sigma \in \mathrm{Aut}(\mathbb{M}/Aab)$ be an automorphism with $\sigma(a_{1}) = a_{1}'$.  Then the sequence $\langle a'_{i} : i < \omega \rangle$ defined by $\sigma(a'_{i}) = a_{i}$ for all $i < \omega$ is an $Ab$-indiscernible sequence with $a'_{0} = a$, hence $a'_{1} \equiv^{L}_{Ab} a$ and $a'_{1} \ind^{K}_{A} bc$, as desired. 
%\end{proof}

\begin{fact} \cite[Lemma 5.7]{dobrowolski2019independence} \label{strong extension} 
Suppose $T$ is NSOP$_{1}$ with existence.  If $A$ is a set of parameters, $c$ is an arbitrary tuple, and $a \ind^{K}_{A} b$, then there is $a' \equiv^{L}_{Ab} a$ such that $a' \ind^{K}_{A} bc$. 
\end{fact}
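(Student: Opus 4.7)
The plan is to combine the extension property (Fact~\ref{basic Kim-independence facts}(1)) with the independence theorem for Lascar strong types (Fact~\ref{basic Kim-independence facts}(5)), with the key technical ingredient being the weak tree Morley machinery of Fact~\ref{slightly less basic Kim-independence facts}(1). The role of the tree Morley sequences is to bridge the gap between plain type equivalence $\equiv_{Ab}$ (which extension produces directly) and Lascar strong type equivalence $\equiv^{L}_{Ab}$ (which is demanded by the conclusion).

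First, I apply extension to $a \ind^{K}_{A} b$ with the parameter $c$, obtaining a conjugate $a_{0}$ with $a_{0} \equiv_{Ab} a$ and $a_{0} \ind^{K}_{A} bc$. In particular $a_{0} \ind^{K}_{A} b$ by monotonicity of Kim-independence on the right. The residual task is thus to locate a tuple $a'$ with $a' \equiv^{L}_{Ab} a$ and $a' \equiv_{Abc} a_{0}$, since then $a' \ind^{K}_{A} bc$ is automatic from $a_{0} \ind^{K}_{A} bc$.

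To perform this upgrade, I invoke Fact~\ref{slightly less basic Kim-independence facts}(1) to produce an $Ab$-indiscernible weak tree Morley sequence over $A$ starting with $a$, and similarly one starting with $a_{0}$. Two facts are then used: (a) in any infinite $Ab$-indiscernible sequence all entries share a common Lascar strong type over $Ab$; and (b) by Kim's lemma for weak tree Morley sequences (Fact~\ref{slightly less basic Kim-independence facts}(2)), such sequences are rigid enough to compare. The plan is to align the two weak tree Morley sequences — by an application of the independence theorem (Fact~\ref{basic Kim-independence facts}(5)) to an appropriately prepared configuration of conjugates of $a, a_{0}$ — into a single $Ab$-indiscernible sequence containing (conjugates fixing $Abc$ of) both $a$ and $a_{0}$. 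Point (a) then delivers $a \equiv^{L}_{Ab} a_{0}$, after which $a' := a_{0}$ (or its image under an $\mathrm{Autf}(\mathbb{M}/Abc)$-conjugation) completes the proof.

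The main obstacle is precisely this alignment step: one cannot simply split $bc$ into $b, c$ and apply the independence theorem directly, because there is no hypothesis linking $b$ and $c$ (in particular, we do not assume $b \ind^{K}_{A} c$, and any auxiliary $c' \equiv^{L}_{Ab} c$ with $b \ind^{K}_{A} c'$ would, by the $\mathrm{Autf}(\mathbb{M}/Ab)$-invariance of $\ind^{K}_{A}$, entail the missing $b \ind^{K}_{A} c$). The resolution — and the real content of the argument — is to work inside the tree Morley framework rather than at the level of formal type extensions: extend the $Ab$-indiscernible weak tree Morley sequence starting with $a$ by a tree-induction that simultaneously realizes $\text{tp}(a_{0}/Abc)$ as a later entry, using Fact~\ref{basic Kim-independence facts}(1), the chain condition (Fact~\ref{basic Kim-independence facts}(4)), and the independence theorem at successor stages. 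The indiscernibility of the completed sequence delivers the Lascar strong type equivalence, and its construction preserves Kim-independence from $bc$ over $A$ at the distinguished entry that plays the role of $a'$.
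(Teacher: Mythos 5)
This statement is quoted in the paper as a Fact from \cite[Lemma 5.7]{dobrowolski2019independence} without proof, so your argument has to stand on its own, and as written it does not: the decisive step is missing. Your reduction is fine up to the point where you have $a_{0} \equiv_{Ab} a$ with $a_{0} \ind^{K}_{A} bc$ (plain extension), and you correctly isolate the remaining task as upgrading $\equiv_{Ab}$ to $\equiv^{L}_{Ab}$, i.e.\ finding a single $Ab$-indiscernible sequence containing $a$ and some element realizing $\mathrm{tp}(a_{0}/Abc)$ (or at least Kim-independent from $bc$ over $A$). But that task \emph{is} the lemma. You yourself identify the obstruction precisely: the independence theorem (Fact \ref{basic Kim-independence facts}(5)) cannot be applied to the pair $b,c$ because no independence between them is assumed, and any auxiliary $c' \equiv^{L}_{Ab} c$ with $b \ind^{K}_{A} c'$ would already give $b \ind^{K}_{A} c$ by invariance. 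The proposed resolution --- ``extend the $Ab$-indiscernible weak tree Morley sequence starting with $a$ by a tree-induction that simultaneously realizes $\mathrm{tp}(a_{0}/Abc)$ as a later entry, using extension, the chain condition, and the independence theorem at successor stages'' --- is an assertion, not a construction. At any successor stage of such an induction you would have to amalgamate a type over $Ab$ (the indiscernibility/Lascar constraint through $a$) with a type over $Abc$ (the constraint $\equiv_{Abc} a_{0}$, or $\ind^{K}_{A} bc$), and the parameter sides of these two types, relative to the base $A$, are exactly $b$ and $bc$, with no Kim-independence between $b$ and $c$ available; so the same obstruction you flagged reappears and nothing in the sketch explains how it is circumvented, nor why the ``distinguished entry'' would remain Kim-independent from $bc$ over $A$ at the end. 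In short, the proposal reduces the statement to an unproved claim essentially equivalent to the statement itself.

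A secondary but real concern is the choice of black boxes: in the cited source, Lemma 5.7 is proved \emph{before} and is used \emph{in} the proof of the independence theorem for Lascar strong types (\cite[Theorem 5.8]{dobrowolski2019independence}, quoted here as Fact \ref{basic Kim-independence facts}(5)), so an argument for this Fact that leans on Fact \ref{basic Kim-independence facts}(5) is circular at the level of the literature even if one treats both as axioms inside the present paper. The known proofs handle the missing $b \ind^{K}_{A} c$ by a different mechanism (amalgamation arguments tailored to a Morley-sequence configuration in $\mathrm{tp}(bc/A)$, available under existence), rather than by the full independence theorem; some ingredient of that kind is what your outline would need to supply to close the gap.
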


The following lemma is easy and well-known, but, in the absence of a clear reference, we provide a proof:  

\begin{lem} \label{type definability of kim-independence}
Suppose $T$ is NSOP$_{1}$ with existence, $A$ is a set of parameters, and $p(x) \in S(A)$.  
\begin{enumerate}
\item Given any tuple of variables $y$, there is a partial type $\Gamma(x,y)$ over $A$ such that $(a,b) \models \Gamma(x,y)$ if and only if $a \models p$ and $a \ind^{K}_{A} b$. 
\item There is a partial type $\Delta(x_{i} : i < \omega)$ over $A$ such that $I = \langle a_{i} : i < \omega \rangle \models \Delta$ if and only if $I$ is an $\ind^{K}$-Morley sequence over $A$ in $p$.  
\end{enumerate}
\end{lem}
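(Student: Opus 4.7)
The plan is to deduce (1) by showing that $S := \{(a, b) : a \models p \text{ and } a \ind^{K}_{A} b\}$ is $A$-type-definable, and then to assemble (2) from (1) together with the standard type-definability of $A$-indiscernibility. The central reduction is that for each $L(A)$-formula $\varphi(x, y)$, the set $D_{\varphi} := \{b : \varphi(x, b) \text{ Kim-divides over } A\}$ is a union of $A$-definable subsets of itself; equivalently, its complement $D_{\varphi}^{c}$ is $A$-type-definable. Granting this reduction, I will take
\[
\Gamma(x, y) := p(x) \cup \{\delta(y) \to \neg \varphi(x, y) : \varphi, \delta \in L(A),\ \delta(\mathbb{M}) \subseteq D_{\varphi}\}
\]
and verify both directions of (1): if $a \ind^{K}_{A} b$ and $\delta(b)$ holds with $\delta(\mathbb{M}) \subseteq D_{\varphi}$, then $\varphi(x, b)$ Kim-divides and hence $\varphi(x, b) \notin \text{tp}(a/Ab)$; conversely, a witness $\varphi(a, b) \in \text{tp}(a/Ab)$ to Kim-dividing places $b$ in some $\delta(\mathbb{M}) \subseteq D_{\varphi}$ from the decomposition of $D_{\varphi}$, violating the corresponding formula in $\Gamma$.

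To establish the reduction, I will apply Kim's Lemma for Morley sequences (Fact \ref{basic Kim-independence facts}(3)) together with the existence axiom: $\varphi(x, b)$ does not Kim-divide over $A$ if and only if there exists a Morley sequence $(b_{i})_{i<\omega}$ over $A$ with $b_{0} = b$ such that $\{\varphi(x, b_{i}) : i < \omega\}$ is consistent. This exhibits $D_{\varphi}^{c}$ as the projection onto $y_{0}$ of the $A$-type-definable set cut out by ``$(y_{i})_{i<\omega}$ is a Morley sequence over $A$'' together with the consistency condition $\bigcap_{k}\{\exists z\, \bigwedge_{i<k} \varphi(z, y_{i})\}$. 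A standard compactness argument in the saturated monster shows that projections of $A$-type-definable sets are themselves $A$-type-definable, so $D_{\varphi}^{c}$ is $A$-type-definable as required.

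For (2), I will apply (1) with the second parameter tuple being $(x_{j})_{j<i}$ to obtain, for each $i < \omega$, a partial $A$-type $\Gamma_{i}(x_{i}, (x_{j})_{j<i})$ expressing $x_{i} \models p$ and $x_{i} \ind^{K}_{A} (x_{j})_{j<i}$, and then set
\[
\Delta(x_{i} : i < \omega) := \Pi_{\text{indisc}}(x_{i} : i<\omega) \cup \bigcup_{i<\omega} \Gamma_{i}(x_{i}, (x_{j})_{j<i}),
\]
where $\Pi_{\text{indisc}}$ is the standard $A$-type-definable partial type of $A$-indiscernibility; the realizations of $\Delta$ are then exactly the $\ind^{K}$-Morley sequences over $A$ in $p$. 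The main technical point throughout is the type-definability of the Morley-sequence condition over $A$, which reduces to the type-definability of the non-forking chain $y_{i} \ind_{A} y_{<i}$; this is a standard fact implicit in the paper's framework.
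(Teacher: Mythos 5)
There is a genuine gap at your central reduction. Everything else (the shape of $\Gamma$, the deduction of (2) from (1) together with the type-definability of $A$-indiscernibility) is fine, but the claim that $D_{\varphi}^{c}=\{b:\varphi(x;b)\text{ does not Kim-divide over }A\}$ is $A$-type-definable is not established by your projection argument, and it is not ``a standard fact implicit in the paper's framework.'' Your argument writes $D_{\varphi}^{c}$ as the projection of the condition ``$(y_{i})_{i<\omega}$ is a Morley sequence over $A$ and $\{\varphi(x;y_{i}):i<\omega\}$ is consistent.'' Projections of $A$-type-definable sets along small tuples are indeed $A$-type-definable by saturation, and the consistency condition is fine; the problem is the Morley-sequence condition itself. ``Morley sequence over $A$'' means $A$-indiscernible plus the \emph{non-forking} chain condition $y_{i}\ind_{A}y_{<i}$, and non-forking is not type-definable in this generality: ``$\varphi(y;a)$ forks over $A$'' is an existential statement over auxiliary parameters (a finite disjunction of formulas, each of which $k$-divides, implied by $\varphi(y;a)$), hence the forking locus is a small union of projections of type-definable sets, i.e.\ a small union of type-definable sets, and its complement is an intersection of open sets, not closed. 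Note also that by invariance $D_{\varphi}$ is a union of complete types over $A$, but an $A$-invariant set need not be a union of $A$-definable sets; asserting that it is amounts to a lowness-flavored uniformity in $b$ -- exactly the kind of extra hypothesis the paper needs in Section \ref{low section} (and even Corollary \ref{type definable} fixes the complete types over $A$ of the parameter tuples). So your $\Gamma$ would be correct if the reduction held, but the reduction is precisely the unproven (and, in general, too strong) point.

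The paper avoids this entirely by exploiting the fact that the type of $a$ \emph{is} fixed, moving it to the parameter side via symmetry: fix $c\models p$ and set $\Gamma(x,y)=p(x)\cup\{\neg\varphi(y;x):\varphi(y;c)\text{ Kim-divides over }A\}$. By invariance, whether $\varphi(y;a)$ Kim-divides over $A$ depends only on $\mathrm{tp}(a/A)=p$, so the same set of formulas works for every $a\models p$; Kim-forking $=$ Kim-dividing guarantees that $b\nind^{K}_{A}a$ is always witnessed by a formula over $Aa$, and symmetry converts $b\ind^{K}_{A}a$ back into $a\ind^{K}_{A}b$. No type-definability of any (Kim-)dividing locus in the unrestricted variable $y$ is needed. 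If you want to salvage your route, you must either restrict $b$ to a fixed type over $A$ (at which point you may as well use the symmetry trick) or supply a proof that the non-forking Morley condition, or equivalently $D_{\varphi}^{c}$, is type-definable -- which is not available. Your part (2) is fine as written once (1) is repaired, and it coincides with the paper's.
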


\begin{proof}
(1)  By compactness, we may assume $y$ is finite.  Fix $c \models p$ and define $\Gamma(x,y)$ by 
$$
\Gamma(x,y) = p(x) \cup \{\neg \varphi(y;x) : \varphi(y;c) \text{ Kim-divides over }A\}.
$$
By symmetry, invariance, and Kim-forking = Kim-dividing, this partial type is as desired. 

(2) One can take $\Delta$ to be the partial type that asserts $\langle x_{i} : i < \omega \rangle$ is $A$-indiscernible, every $x_{i} \models p$, and $x_{i} \ind^{K}_{A} x_{<i}$ (which is type-definable over $A$ by (1)).  
\end{proof}

The following lemma is the analogue of the `strong independence theorem' of \cite[Theorem 2.3]{kruckman2018generic} for Lascar strong types.  

\begin{lem} \label{strongIT}
Suppose $T$ is NSOP$_{1}$ with existence.  If $A$ is a set of parameters, $a_{0} \ind^{K}_{A} b$, $a_{1} \ind^{K}_{A} c$, $b \ind^{K}_{A} c$, and $a_{0} \equiv^{L}_{A} a_{1}$, then there is $a$ such that $a \equiv^{L}_{Ab} a_{0}$, $a \equiv^{L}_{Ac} a_{1}$ and, additionally, we have $a \ind^{K}_{A} bc$, $b \ind^{K}_{A} ac$, and $c \ind^{K}_{A} ab$.  
\end{lem}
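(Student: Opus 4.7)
The plan is to apply the standard Independence Theorem for Lascar strong types (Fact~\ref{basic Kim-independence facts}(5)) to produce a preliminary witness $a$ satisfying the Lascar conditions and $a \ind^{K}_{A} bc$, and then to upgrade this $a$ using strong extension (Fact~\ref{strong extension}) and a second application of the Independence Theorem.

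First, I would use strong extension to strengthen the hypotheses: starting from $a_{0} \ind^{K}_{A} b$ (respectively $a_{1} \ind^{K}_{A} c$), one finds $a_{0}' \equiv^{L}_{Ab} a_{0}$ with $a_{0}' \ind^{K}_{A} bc$ (respectively $a_{1}' \equiv^{L}_{Ac} a_{1}$ with $a_{1}' \ind^{K}_{A} bc$). These extensions preserve Lascar strong types over $A$, so $a_{0}' \equiv^{L}_{A} a_{1}'$ still holds and we may relabel to assume $a_{0}, a_{1} \ind^{K}_{A} bc$ from the outset. Applying Fact~\ref{basic Kim-independence facts}(5) then yields a preliminary $a$ with $a \equiv^{L}_{Ab} a_{0}$, $a \equiv^{L}_{Ac} a_{1}$, and $a \ind^{K}_{A} bc$, securing three of the five required conditions.

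Next, to obtain the remaining conditions $b \ind^{K}_{A} ac$ and $c \ind^{K}_{A} ab$, I would apply the Independence Theorem a second time, with $b$ now in the role of the amalgamated element. The hypotheses are available: $b \ind^{K}_{A} a$ and $a \ind^{K}_{A} c$ follow from $a \ind^{K}_{A} bc$ by right monotonicity and symmetry, $b \ind^{K}_{A} c$ is given, and $b \equiv^{L}_{A} b$ holds trivially. Fact~\ref{basic Kim-independence facts}(5) then produces $b^{*}$ with $b^{*} \equiv^{L}_{Aa} b$, $b^{*} \equiv^{L}_{Ac} b$, and $b^{*} \ind^{K}_{A} ac$. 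Choosing a Lascar-strong automorphism $\sigma \in \mathrm{Autf}(\mathbb{M}/Ac)$ with $\sigma(b^{*}) = b$ and setting $a' = \sigma(a)$ yields $b \ind^{K}_{A} a'c$ by invariance, preserves $a' \equiv^{L}_{Ac} a_{1}$ (since $\sigma$ fixes $Ac$), and preserves $a' \equiv^{L}_{Ab} a_{0}$ by tracking the Lascar data: from $b^{*} \equiv^{L}_{Aa} b$ one gets $(a,b^{*}) \equiv^{L}_{A} (a,b) \equiv^{L}_{A} (a_{0},b)$, so applying $\sigma$ gives $(a',b) \equiv^{L}_{A} (a_{0},b)$.

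The main obstacle is that this single transport need not preserve $a' \ind^{K}_{A} bc$ (since $\sigma$ does not fix $b$, applying $\sigma$ to $a \ind^{K}_{A} bc$ only yields $a' \ind^{K}_{A} \sigma(b)c$), nor does it automatically produce the symmetric condition $c \ind^{K}_{A} a'b$. The expected resolution is to iterate the argument with a symmetric application of the Independence Theorem in which $c$ plays the role of the amalgamated element, carefully ordering the two transports so that each one preserves the conditions obtained in the previous step, or alternatively, to combine all of these applications into a single compactness argument exploiting the type-definability of Kim-independence (Lemma~\ref{type definability of kim-independence}). This bookkeeping is the technical heart of the proof.
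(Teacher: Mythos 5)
There is a genuine gap, and it is exactly where you stop: the content of this lemma is the simultaneous achievement of the three independences $a \ind^{K}_{A} bc$, $b \ind^{K}_{A} ac$, $c \ind^{K}_{A} ab$, and your argument never delivers it. Your first step (getting $a$ with $a \equiv^{L}_{Ab} a_{0}$, $a \equiv^{L}_{Ac} a_{1}$, $a \ind^{K}_{A} bc$) is just Fact \ref{basic Kim-independence facts}(5) (the preliminary use of Fact \ref{strong extension} is redundant, since the independence theorem already outputs $a_{*} \ind^{K}_{A} bc$). Your second step, amalgamating over $b$ and transporting by $\sigma \in \mathrm{Autf}(\mathbb{M}/Ac)$, gains $b \ind^{K}_{A} a'c$ but, as you yourself observe, destroys $a' \ind^{K}_{A} bc$ and does nothing for $c \ind^{K}_{A} a'b$. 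The proposed fix\textemdash iterate with $c$ as the amalgamated element in a ``careful order,'' or run ``a single compactness argument''\textemdash is not a proof: each transport moves $a$ by an automorphism that does not fix the other two parameters, so each pass can break what the previous pass secured, and you give no argument that any ordering stabilizes; likewise the compactness variant would require proving finite satisfiability of a type asserting all three independences at once, which is precisely the statement being proved. So the technical heart is missing, not merely deferred.

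For comparison, the paper avoids any re-moving of $a$ after it is placed. Using Fact \ref{strong extension} twice it produces $c_{0} \equiv^{L}_{Ab} c$ and $b_{1}$ with $bc_{0} \equiv^{L}_{A} b_{1}c$ and $bc_{0} \ind^{K}_{A} b_{1}c$, so that $(b_{0},c_{0}) = (b,c_{0})$ and $(b_{1},c_{1}) = (b_{1},c)$ start an $\ind^{K}$-Morley sequence of Lascar-equivalent pairs, which by Fact \ref{slightly less basic Kim-independence facts}(3) extends to a $\mathbb{Z}$-indexed weak tree Morley sequence $I = \langle (b_{i},c_{i}) : i \in \mathbb{Z} \rangle$. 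One application of the independence theorem over $(b_{0},c_{0})$ gives $a_{*} \ind^{K}_{A} b_{0}c_{0}$ with the right Lascar types, and then Kim's lemma plus the chain condition replaces $a_{*}$ by $a_{**}$ with $I$ indiscernible over $Aa_{**}$ and $a_{**} \ind^{K}_{A} I$. Now the two missing independences come for free from Kim's lemma for weak tree Morley sequences applied to the two halves of $I$: $\langle b_{i} : i \leq 0 \rangle$ is weak tree Morley over $A$, contains $b$, and is $Aa_{**}c$-indiscernible, giving $a_{**}c \ind^{K}_{A} b$; symmetrically $\langle c_{i} : i \geq 1 \rangle$ gives $a_{**}b \ind^{K}_{A} c$. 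This one-shot placement of $a_{**}$ against a two-sided sequence is the mechanism your proposal lacks.
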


\begin{proof}
By Fact \ref{strong extension}, there is $c' \equiv^{L}_{Ab} c$ such that $c' \ind^{K}_{A} bc$.  Let $\sigma \in \mathrm{Autf}(\mathbb{M}/Ab)$ be an automorphism such that $\sigma(c') = c$ and let $c_{0} = \sigma(c)$.  Then we have $c \ind^{K}_{A} bc_{0}$ and $c_{0} \equiv_{Ab}^{L} c$ and hence, in particular, $c_{0}b \equiv_{A}^{L} cb$.  By symmetry and a second application of Fact \ref{strong extension} once again, we find $b''c'' \equiv^{L}_{Ac} bc_{0}$ with $b''c'' \ind^{K}_{A} bc$.  Let $\tau \in \mathrm{Autf}(\mathbb{M}/Ac)$ be a strong automorphism with $\tau(b''c'') = bc_{0}$ and define $b_{1} = \tau(b)$.  Then by construction, we have $b''c'' \equiv_{A} bc_{0}$ and $bc_{0} \equiv^{L}_{A} bc$, it follows that $b''c'' \equiv^{L}_{A} bc$, and hence $\tau(b''c'') \equiv^{L}_{A} \tau(bc)$, which, after unraveling definitions, gives $bc_{0} \equiv^{L}_{A} b_{1}c$.  Moreover, since $b''c'' \ind^{K}_{A} bc$, we obtain $bc_{0} \ind^{K}_{A} b_{1}c$ by invariance.   Let $b_{0} = b$ and $c_{1} = c$.  By Fact \ref{slightly less basic Kim-independence facts}(3), we can extend the sequence $\langle (b_{i},c_{i}) : i < 2 \rangle$ to a weak tree Morley sequence $I = \langle (b_{i},c_{i}) : i \in \mathbb{Z} \rangle$ over $A$.  

Choose $a'$ such that $a_{1}c_{1} \equiv^{L}_{A} a'c_{0}$.  Then we have $a_{0} \equiv^{L}_{A} a'$, as well as $a_{0} \ind^{K}_{A} b_{0}$, $a' \ind^{K}_{A} c_{0}$ by our assumptions.  Additionally, since $b \ind^{K}_{A} c$, $b_{0} = b$ and $c \equiv_{Ab} c_{0}$, we have $b_{0} \ind^{K}_{A} c_{0}$.  Therefore, by Fact \ref{basic Kim-independence facts}(6), there is $a_{*}$ with $a_{*} \equiv^{L}_{Ab_{0}} a_{0}$, $a_{*} \equiv^{L}_{Ac_{0}} a'$, with $a_{*} \ind^{K}_{A} b_{0}c_{0}$.    

Because $I$ is a weak tree Morley sequence and $a_{*} \ind^{K}_{A} b_{0}c_{0}$, by Kim's lemma, compactness, and an automorphism, there is $a_{**} \ind^{K}_{A} I$ such that $a_{**}b_{0}c_{0} \equiv^{L}_{A} a_{*}b_{0}c_{0}$ and such that $I$ is $Aa_{**}$-indiscernible.  Note that, by construction, $a_{**} \equiv^{L}_{Ab} a_{0}$, $a_{**} \equiv^{L}_{Ac} a_{1}$, and $a_{**} \ind^{K}_{A} bc$.

Additionally, the sequence $\langle b_{i} : i \in \mathbb{Z}^{\leq 0} \rangle$ is a weak tree Morley sequence over $A$ which is $Aa_{**}c$-indiscernible and containing $b_{0} = b$, hence $a_{**} c \ind^{K} b$, by Kim's lemma for weak tree Morley sequences.  Similarly, the sequence $\langle c_{i} : i \in \mathbb{Z}^{\geq 1} \rangle$ is a weak tree Morley sequence over $A$ containing $c = c_{1}$ which is $Aa_{**}b$-indiscernible, yielding $a_{**}b \ind^{K}_{A} c$.  By symmetry, we conclude. 
\end{proof}

\section{Transitivity and witnessing} \label{transitivity section}

\subsection{Preliminary lemmas}

We begin by establishing some lemmas, allowing us to construct sequences that are $\ind^{K}$-Morley over more than one base simultaneously.  The broad structure of the argument will follow that of \cite{kaplan2019transitivity}, which established transitivity over models for Kim-independence in NSOP$_{1}$ theories, however all uses of coheirs and heirs will need to be replaced.  

In particular, the following lemma does not follow the corresponding \cite[Lemma 3.1]{kaplan2019transitivity}, instead producing the desired sequence by a tree-induction.  

\begin{lem} \label{good sequence}
Suppose $T$ is NSOP$_{1}$ and satisfies the existence axiom.  If $A \subseteq B$ and $a \ind^{K}_{A} B$, then there is a weak tree Morley sequence $\langle a_{i} : i < \omega \rangle$ over $B$ with $a_{0} = a$ such that $a_{i} \ind^{K}_{A} Ba_{<i}$ for all $i < \omega$.  	
\end{lem}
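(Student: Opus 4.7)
The plan is to construct a weakly Morley tree $(a_\eta)_{\eta\in\mathcal{T}_\omega}$ over $B$ whose all-zeros branch gives the desired sequence, using a tree-induction on finite heights followed by the modeling property. This replaces the coheir/heir arguments available only over models in \cite{kaplan2019transitivity}, as flagged just before the lemma statement. Concretely, by induction on $n<\omega$ I build a tree $(a^n_\eta)_{\eta\in\mathcal{T}_n}$ that is weakly Morley over $B$, with $a^n_{\zeta_0^{(n)}}=a$, such that the branch satisfies the Kim-independence chain $a^n_{\zeta_\beta^{(n)}}\ind^K_A B a^n_{\zeta_{<\beta}^{(n)}}$ for every $\beta\in[n]$, and additionally the root satisfies the auxiliary condition that it is Kim-independent, over $A$, from $B$ together with the entire branch. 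The root condition is the essential auxiliary invariant: under the canonical inclusion $\iota_{n,n+1}\colon\mathcal{T}_n\hookrightarrow\mathcal{T}_{n+1}$, the old root becomes the new $n$-th branch element, converting the stage-$n$ root condition into the next case of the chain at stage $n+1$.

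For the successor step, I take a Morley sequence $\langle\bar a^{n,i}:i<\omega\rangle$ over $B$ of copies of the tuple $\bar a^n$ enumerating the previous tree, with $\bar a^{n,0}=\bar a^n$, whose existence is given by the existence axiom; placed as the $\omega$-many cones at new level-$n$ nodes, these extend the weakly-spread-out structure over $B$. To choose the new root $a^{n+1}_\emptyset$, I apply Fact \ref{basic Kim-independence facts}(1) to extend $\mathrm{tp}(a/B)$ to a complete type over $B$ together with all of the $\bar a^{n,i}$ that does not Kim-divide over $A$, and realize it; the hypothesis $a\ind^K_A B$ is exactly what enables this extension. The modeling property (Fact \ref{modeling}) then replaces the constructed tree with one $s$-indiscernible over $B$ locally based on it, and Lemma \ref{type definability of kim-independence} guarantees that the Kim-independence conditions are type-definable and hence preserved. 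A compactness/limit argument over $n$ yields the infinite tree $(a_\eta)_{\eta\in\mathcal{T}_\omega}$ with the required properties, whose all-zeros branch $\langle a_{\zeta_\beta}:\beta<\omega\rangle$ is the desired weak tree Morley sequence over $B$ starting with $a$ and satisfying the chain $a_{\zeta_\beta}\ind^K_A B a_{\zeta_{<\beta}}$.

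The main obstacle is the successor step: one must preserve the weakly-spread-out structure over $B$ (via a Morley choice of subtrees from existence) and simultaneously extend the Kim-independence chain over $A$ (via Kim-extension, applied to pick the new root), while restoring $s$-indiscernibility through the modeling property. It is precisely the need to avoid global coheir/heir constructions that forces this tree-induction pattern rather than the more direct one-element-at-a-time argument available over models.
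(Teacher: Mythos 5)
Your successor step is exactly the paper's: take a Morley sequence over $B$ of copies of the stage-$n$ tree (existence), choose the new root by extension so that it is $\ind^{K}_{A}$-independent from $B$ together with the whole array of copies, restore $s$-indiscernibility over $B$ by the modeling property, and transfer the root condition through local basedness because failure of $a_{*} \ind^{K}_{A} B(\text{tree})$ is witnessed by a single Kim-dividing formula (this is the content behind your appeal to Lemma \ref{type definability of kim-independence}; the paper phrases it as a finite-character argument, but the idea is the same).

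The genuine gap is in how you finish. You claim each finite-stage tree is weakly Morley over $B$ and that "a compactness/limit argument over $n$" produces the height-$\omega$ tree with the required properties. Neither is right as stated: the construction only secures $s$-indiscernibility and weak spread-outness over $B$ (plus the root conditions), not the level-homogeneity clause of Definition \ref{weaklyspread}(2), and taking the union along the canonical inclusions $\iota_{n\omega}$ gives a height-$\omega$ tree whose all-zeros path $\langle a_{\zeta_{n}} : n<\omega\rangle$ need not even be $B$-indiscernible, since nodes at different levels have different quantifier-free types in $L_{s,\omega}$ and $s$-indiscernibility says nothing across levels. So the path of your limit tree is not (yet) a weak tree Morley sequence over $B$, which is the whole point of the lemma. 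The repair is the paper's: your successor step never uses finiteness of the height, so run the induction through all ordinals up to a sufficiently large $\kappa$ (taking unions at limits, which your compatibility condition allows) and then apply the Erd\H{o}s--Rado extraction of \cite[Lemma 5.10]{kaplan2017kim} to obtain a weakly Morley tree over $B$ indexed by $\mathcal{T}_{\omega}$; the level-homogeneity, and hence the indiscernibility of the path, comes only from this extraction over a large number of levels, while the root conditions survive it because their failure is formula-witnessed and the extracted tree's finite configurations over $B$ are realized in the big tree.
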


\begin{proof}
By induction on $\alpha$, we will construct trees $(a^{\alpha}_{\eta})_{\eta \in \mathcal{T}_{\alpha}}$ so that 
\begin{enumerate}
\item $(a^{\alpha}_{\eta})_{\eta \in \mathcal{T}_{\alpha}}$ is $s$-indiscernible and weakly spread out over $B$.
\item $a^{\alpha}_{\eta} \models \text{tp}(a/B)$ for all $\eta \in \mathcal{T}_{\alpha}$.  
\item If $\alpha$ is a successor, $a^{\alpha}_{\emptyset} \ind^{K}_{A} Ba^{\alpha}_{\vartriangleright \emptyset}$.
\item If $\alpha < \beta$, then $a^{\beta}_{\iota_{\alpha\beta}(\eta)} = a^{\alpha}_{\eta}$ for all $\eta \in \mathcal{T}_{\alpha}$. 	
\end{enumerate}
For $\alpha = 0$, we put $a^{0}_{\emptyset} = a$, and for $\delta$ limit, we will define $(a^{\delta}_{\eta})_{\eta \in \mathcal{T}_{\delta}}$ by setting $a^{\delta}_{\iota_{\alpha \delta}(\eta)} = a^{\alpha}_{\eta}$ for all $\alpha < \delta$ and $\eta \in \mathcal{T}_{\alpha}$ which, by (4) and induction, is well-defined and satisfies the requirements.  

Now suppose we are given $(a^{\beta}_{\eta})_{\eta \in \mathcal{T}_{\beta}}$ satisfying the requirements for all $\beta \leq \alpha$.  Let $\langle (a^{\alpha}_{\eta,i})_{\eta \in \mathcal{T}_{\alpha}} : i < \omega \rangle$ be a mutually $s$-indiscernible Morley sequence over $B$ with $a^{\alpha}_{\eta,0} = a^{\alpha}_{\eta}$ for all $\eta \in \mathcal{T}_{\alpha}$, which exists by Lemma \ref{Morley mutual}.  Apply extension to find $a_{*} \equiv_{B} a$ so that $a_{*} \ind^{K}_{A} B(a^{\alpha}_{\eta,i})_{\eta \in \mathcal{T}_{\alpha},i < \omega}$.  Define a tree $(b_{\eta})_{\eta \in \mathcal{T}_{\alpha+1}}$ by setting $b_{\emptyset}= a_{*}$ and $b_{\langle i \rangle \frown \eta} = a^{\alpha}_{\eta,i}$ for all $i < \omega$ and $\eta \in \mathcal{T}_{\alpha}$.  We may define $(a^{\alpha+1}_{\eta})_{\eta \in \mathcal{T}_{\alpha+1}}$ to be a tree which is $s$-indiscernible over $B$ and locally based on $(b_{\eta})_{\eta \in \mathcal{T}_{\alpha+1}}$ over $B$.  By an automorphism, we may assume $a^{\alpha+1}_{\iota_{\alpha \alpha+1}(\eta)} = a^{\alpha}_{\eta}$ for all $\eta \in \mathcal{T}_{\alpha}$, hence conditions (2), and (4) are clearly satisfied.  Moreover, by Lemma \ref{mutual preservation}, we have $\langle (a_{\eta,i})_{\eta \in \mathcal{T}_{\alpha}} : i < \omega \rangle \equiv_{B} \langle a^{\alpha+1}_{\unrhd \langle i \rangle} : i < \omega \rangle$ so $\langle a^{\alpha+1}_{\unrhd \langle i \rangle} : i < \omega \rangle$ is a Morley sequence over $B$.  Then by (4) and induction, it follows that $(a^{\alpha+1}_{\eta})_{\eta \in \mathcal{T}_{\alpha+1}}$ is $s$-indiscernible and spread out over $B$, which shows (1).

For (3), we just note that, by symmetry, if $a_{\emptyset}^{\alpha+1} \nind_{A}^{K} Ba^{\alpha+1}_{\vartriangleright \emptyset}$, there is some formula $\varphi(x;a_{\emptyset}^{\alpha+1}) \in \text{tp}(Ba^{\alpha+1}_{\vartriangleright \emptyset}/Aa^{\alpha+1}_{\emptyset})$ that Kim-divides over $A$.  As the tree $(a^{\alpha+1}_{\eta})_{\eta \in \mathcal{T}_{\alpha+1}}$ is locally based on $(b_{\eta})_{\eta \in \mathcal{T}_{\alpha+1}}$, it follows that some tuple from $Bb_{\vartriangleright \emptyset}$ also realizes $\varphi(x;a_{*})$ and $a_{*} \equiv_{B} a^{\alpha+1}_{\emptyset}$, so $\varphi(x;a_{*})$ Kim-divides over $A$ as well, contradicting the choice of $a_{*}$.  This contradiction establishes (3), completing the induction.

By considering $(a^{\kappa}_{\eta})_{\eta \in \mathcal{T}_{\kappa}}$ for $\kappa$ sufficiently large, we may apply Erd\H{o}s-Rado, as in \cite[Lemma 5.10]{kaplan2017kim}, to find the desired sequence.      
\end{proof}

The proof of the next lemma follows \cite[Lemma 3.2]{kaplan2019transitivity}.

\begin{lem} \label{technical lemma}
Suppose $T$ is an NSOP$_{1}$ theory satisfying the existence axiom.  If $a \ind^{K}_{A} b$ and $c \ind^{K}_{A} b$, then there is $c'$ so that $c' \equiv_{Ab} c$, $ac' \ind^{K}_{A} b$, and $a \ind^{K}_{Ab} c'$.  	
\end{lem}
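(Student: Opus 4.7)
The plan is to adapt \cite[Lemma 3.2]{kaplan2019transitivity}, whose argument uses coheirs over models, to the existence-axiom setting by replacing coheir extensions with weak tree Morley sequences produced by Lemma \ref{good sequence} together with Lascar-strong extensions from Fact \ref{strong extension}. The verifications of (ii) and (iii) will then be carried out by Kim's lemma for weak tree Morley sequences (Fact \ref{slightly less basic Kim-independence facts}(2)), after suitable Ramsey/Erd\H{o}s-Rado extractions using the modeling property (Fact \ref{modeling}).

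First I would apply Lemma \ref{good sequence} to $c \ind^{K}_{A} b$ to build a weak tree Morley sequence $\langle c_{i} : i < \omega \rangle$ over $Ab$ with $c_{0} = c$ and $c_{i} \ind^{K}_{A} Abc_{<i}$. Then, using Fact \ref{strong extension} applied to $a \ind^{K}_{A} b$ with the whole sequence adjoined on the right, I would find $a^{*} \equiv^{L}_{Ab} a$ such that $a^{*} \ind^{K}_{A} b\langle c_{i} \rangle$. Conjugating by a Lascar strong automorphism $\sigma \in \mathrm{Autf}(\mathbb{M}/Ab)$ taking $a^{*}$ back to $a$, I set $c' := \sigma(c)$ and $c'_{i} := \sigma(c_{i})$; this gives $c' \equiv_{Ab} c$ (condition (i)), transports the sequence to $\langle c'_{i} \rangle$ still weak tree Morley over $Ab$ with $c'_{0} = c'$ and $c'_{i} \ind^{K}_{A} Abc'_{<i}$, and yields $a \ind^{K}_{A} b\langle c'_{i} \rangle$.

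To deduce (iii) $a \ind^{K}_{Ab} c'$, I would apply Fact \ref{slightly less basic Kim-independence facts}(2) with base $Ab$ to the sequence $\langle c'_{i} \rangle$. A Ramsey extraction over $Aba$ on $\langle c'_{i} \rangle$ produces an $Aba$-indiscernible sequence $\langle d_{i} \rangle$ locally based on $\langle c'_{i} \rangle$, which remains weak tree Morley over $Ab$ by the EM-preservation noted after Definition \ref{weaklyspread}; the independence $a \ind^{K}_{A} b\langle c'_{i} \rangle$ allows one to arrange $d_{0} = c'$ after composing with an automorphism of $\mathbb{M}$ over $Aba$. Then for any $\varphi(x;c') \in \mathrm{tp}(a/Abc')$, $\models \varphi(a;d_{i})$ for all $i$ by $Aba$-indiscernibility, so $\{\varphi(x;d_{i}) : i < \omega\}$ is consistent and $\varphi$ does not Kim-divide over $Ab$. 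To establish (ii) $ac' \ind^{K}_{A} b$, an analogous construction is carried out on the $b$-side: apply Lemma \ref{good sequence} to $b \ind^{K}_{A} c'$ (which holds from (i) by symmetry), apply the chain condition (Fact \ref{basic Kim-independence facts}(4)) with $a \ind^{K}_{A} b$ to produce $a^{\sharp} \equiv_{Ab} a$ making the sequence $Aa^{\sharp}$-indiscernible, conjugate by an automorphism over $Ab$ sending $a^{\sharp}$ to $a$, and then Ramsey-extract to obtain a weak tree Morley sequence over $A$ starting with $b$ that is $Aac'$-indiscernible; then (ii) follows by Kim's lemma for weak tree Morley sequences, since any $\psi(x,y;b) \in \mathrm{tp}(ac'/Ab)$ satisfies $\models \psi(a,c';e_{j})$ for all $j$ by the indiscernibility.

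The main obstacle is combining the two constructions so that the same $c'$ realizes both (ii) and (iii): the conjugation over $Ab$ used in producing the $Aa$-indiscernible sequence on the $b$-side can move the $c'$ fixed in the first part, so these two halves of the argument must be reconciled. The expectation is that this can be arranged either by merging the two constructions into a single simultaneous tree induction in the spirit of Lemma \ref{good sequence}, or by executing the Ramsey extractions with enough care that the chain-condition automorphism in the second construction can be chosen to fix $Ac'$ in addition to $Ab$; here the replacement of coheirs by weak tree Morley sequences is essential, since the latter provide the flexibility needed to maintain both sides of the construction simultaneously.
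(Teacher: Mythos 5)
There is a genuine gap, and it occurs at the crux of the lemma. In your argument for (iii), after Ramsey-extracting an $Aba$-indiscernible sequence $\langle d_{i} : i < \omega\rangle$ locally based on $\langle c'_{i}\rangle$, you claim that the independence $a \ind^{K}_{A} b\langle c'_{i}\rangle$ allows you to arrange $d_{0} = c'$ by an automorphism over $Aba$. That requires $d_{0} \equiv_{Aba} c'$, which is a chain-condition statement for the weak tree Morley sequence $\langle c'_{i}\rangle$ \emph{over the base $Ab$}; but the only independence you have for $a$ is over $A$, and passing from $a \ind^{K}_{A} b\langle c'_{i}\rangle$ to any independence-type conclusion over the base $Ab$ is precisely base monotonicity, which fails in non-simple NSOP$_{1}$ theories. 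Note that your construction guarantees nothing about $c'$ beyond $c' \equiv_{Ab} c$ and $a \ind^{K}_{A} bc'$ (plus the sequence data), so if this step were valid the lemma would essentially reduce to base monotonicity. The analogous step in your argument for (ii) has the same problem (the chain condition, Fact \ref{basic Kim-independence facts}(4), applies to nonforking Morley sequences over $A$, not to the sequence you build over $Ac'$), and the difficulty you flag at the end---getting one $c'$ to satisfy (ii) and (iii) simultaneously---is the actual content of the lemma and is not resolved by either device you suggest: the $Ab$-automorphism in your second construction cannot in general be chosen to fix $c'$.

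The paper's proof avoids both issues by writing down the type of the desired $c'$ over $Aab$ explicitly: $\Delta(x;b,a)$ consists of $\mathrm{tp}(c/Ab)$, together with $\neg\varphi(x,a;b)$ for every $L(Ab)$-formula $\varphi(x,y;b)$ Kim-dividing over $A$ (encoding $ac' \ind^{K}_{A} b$), and $\neg\psi(x;b,a)$ for every $\psi(x;b,a) \in L(Aab)$ Kim-dividing over $Ab$ (encoding $a \ind^{K}_{Ab} c'$ via symmetry and Kim-forking $=$ Kim-dividing). Consistency is then proved in two steps. Claim 1 shows, by iterating Fact \ref{strong extension} and the strong independence theorem (Lemma \ref{strongIT}), that $\bigcup_{i}\Gamma(x;b,a_{i})$ is consistent along \emph{any} $Ab$-indiscernible sequence $\langle a_{i}\rangle$ with $a_{0}=a$ and $a_{i} \ind^{K}_{A} ba_{<i}$. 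Claim 2 applies Lemma \ref{good sequence} to $a \ind^{K}_{A} b$---to $a$, not to $c$ or $b$ as in your proposal---to obtain such a sequence that is in addition weak tree Morley over $Ab$; if $\Gamma(x;b,a)$ implied a formula $\psi(x;b,a)$ Kim-dividing over $Ab$, then Kim's lemma for weak tree Morley sequences (Fact \ref{slightly less basic Kim-independence facts}(2)) would make $\{\psi(x;b,a_{i}) : i<\omega\}$ inconsistent, contradicting Claim 1. The idea you are missing is this dualization: rather than building sequences of copies of $c$ and trying to transport indiscernibility to the $a$-side, one realizes the desired type of $c'$ against a sequence of copies of $a$ that is simultaneously weak tree Morley over $Ab$ and satisfies $a_{i} \ind^{K}_{A} ba_{<i}$; it is exactly this simultaneity, supplied by Lemma \ref{good sequence}, that replaces the heir/coheir argument of the model-based proof.
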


\begin{proof}
Define a partial type $\Gamma(x;b,a)$ over $Aab$ as follows:
$$
\Gamma(x;b,a) = \text{tp}(c/Ab) \cup \{\neg \varphi(x,a;b) : \varphi(x,y;b) \in L(Ab) \text{ Kim-divides over }A\}
$$

\textbf{Claim 1}:  If $\langle a_{i} : i < \omega \rangle$ is an $Ab$-indiscernible sequence satisfying $a_{0} = a$ and $a_{i} \ind^{K}_{A} ba_{<i}$ for all $i < \omega$, then $\bigcup_{i < \omega} \Gamma(x;b,a_{i})$ is consistent.

\emph{Proof of claim}:  By induction on $n < \omega$, we will find $c_{n} \equiv^{L}_{A} c$ such that $c_{n} \ind^{K}_{A} ba_{<n}$ and $c_{n} \models \bigcup_{i < n} \Gamma(x;b,a_{i})$.  For $n = 0$, we can put $c_{0} = c$, since $c \ind^{K}_{A} b$ by assumption.  Assume we have found $c_{n}$, and, by Fact \ref{strong extension}, choose $c'$ such that $c' \equiv^{L}_{A} c$ and $c' \ind^{K}_{A} a_{n}$.  Then $c' \equiv^{L}_{A} c \equiv^{L}_{A} c_{n}$ and, since $a_{n} \ind^{K}_{A} ba_{<n}$, we may apply Lemma \ref{strongIT} to find $c_{n+1} \equiv^{L}_{A} c$ such that $c_{n+1} \models \text{tp}(c_{n}/Aba_{<n}) \cup \text{tp}(c'/Aa_{n})$ and such that $c_{n+1} \ind^{K}_{A} ba_{<n+1}$ and $a_{n}c_{n+1} \ind^{K}_{A} ba_{<n}$, hence, in particular, $c_{n+1} \models \bigcup_{i < n+1} \Gamma(x;b,a_{i})$.  The claim follows by compactness.\qed

Next we define a partial type $\Delta(x;b,a)$ as follows:
$$
\Delta(x;b,a) = \Gamma(x;b,a) \cup \{\neg\psi(x;b,a) : \psi(x;b,a) \in L(Aab) \text{ Kim-divides over }Ab\}.
$$  
\textbf{Claim 2}:  The set of formulas $\Delta(x;b,a)$ is consistent.

\emph{Proof of claim}:  Suppose not.  Then because Kim-forking and Kim-dividing are the same in NSOP$_{1}$ with existence, there is some formula $\psi(x;b,a) \in L(Aab)$ such that 
$$
\Gamma(x;b,a) \vdash \psi(x;b,a)
$$
and $\psi(x;b,a)$ Kim-divides over $Ab$.  As $a \ind^{K}_{A} b$, we know by Lemma \ref{good sequence} that there is a sequence $\langle a_{i} : i < \omega \rangle$ with $a_{0} = a$ which is a weak tree Morley sequence over $Ab$ and satisfies $a_{i} \ind^{K}_{A} ba_{<i}$ for all $i < \omega$.  Then by Claim 1, $\bigcup_{i < \omega} \Gamma(x;b,a_{i})$ is consistent.  However, we have
$$
\bigcup_{i < \omega} \Gamma(x;b,a_{i}) \vdash \{\psi(x;b,a_{i}) : i < \omega\}
$$
and $\{\psi(x;b,a_{i}) : i < \omega\}$ is inconsistent, because weak tree Morley sequences witness Kim-dividing.  This contradiction proves the claim.\qed

To conclude, we may take $c'$ to be any realization of $\Delta(x;b,a)$.  
\end{proof}

The next proposition is a strengthening of Fact \ref{slightly less basic Kim-independence facts}(1).

\begin{prop} \label{extra good sequence}
Suppose $T$ is an NSOP$_{1}$ theory satisfying the existence axiom.  If $a \ind^{K}_{A} b$, then there is a sequence $I = \langle a_{i} : i < \omega \rangle$ with $a_{0} = a$ such that $I$ is a weak tree Morley sequence over $A$ and an $\ind^{K}$-Morley sequence over $Ab$.\end{prop}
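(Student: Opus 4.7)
The plan is to adapt the tree-induction in the proof of Lemma \ref{good sequence}, strengthening the inductive data so that we simultaneously control Kim-independence over $A$ (needed to obtain the weak tree Morley property over $A$) and Kim-independence over $Ab$ (needed to obtain the $\ind^{K}$-Morley property over $Ab$). The key observation enabling Kim-extension over the base $Ab$ is that $\mathrm{tp}(a/Ab)$ trivially does not Kim-divide over $Ab$: any formula therein has parameters from $Ab$, and any Morley sequence over $Ab$ starting from a tuple already in $Ab$ is forced to be constant, so the resulting sequence of formulas is consistent.

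By induction on $\alpha$ I would build trees $(a^{\alpha}_{\eta})_{\eta \in \mathcal{T}_{\alpha}}$ satisfying: (i) $s$-indiscernibility over $Ab$ and weakly spread out over $A$; (ii) $a^{\alpha}_{\eta} \equiv_{Ab} a$ for every $\eta$; (iii) $(a^{\alpha}_{\eta})_{\eta} \ind^{K}_{A} b$ viewed as a single tuple; (iv) at successor $\alpha$, $a^{\alpha}_{\emptyset} \ind^{K}_{Ab} a^{\alpha}_{\vartriangleright \emptyset}$; and (v) the usual coherence under the inclusions $\iota_{\alpha \beta}$. Starting from $a^{0}_{\emptyset} = a$ and taking unions at limits, the successor step runs as follows: the chain condition (Fact \ref{basic Kim-independence facts}(4)), applied to $b \ind^{K}_{A} (a^{\alpha}_{\eta})_{\eta}$ (which follows from (iii) by symmetry), yields, after an automorphism, a Morley sequence $\langle (a^{\alpha}_{\eta, i}) : i < \omega \rangle$ over $A$ with $(a^{\alpha}_{\eta, 0}) = (a^{\alpha}_{\eta})$ that is moreover $Ab$-indiscernible and satisfies $b \ind^{K}_{A} \langle (a^{\alpha}_{\eta, i}) \rangle$. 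Writing $\bar{t}$ for this Morley sequence, I invoke Lemma \ref{technical lemma} on the pair $(\bar{t}, a)$ (both Kim-independent from $b$ over $A$) to obtain $a_{*} \equiv_{Ab} a$ with $\bar{t}\, a_{*} \ind^{K}_{A} b$ and $a_{*} \ind^{K}_{Ab} \bar{t}$. Forming $(b_{\eta})_{\eta \in \mathcal{T}_{\alpha+1}}$ with $b_{\emptyset} = a_{*}$ and $b_{\langle i \rangle \frown \eta} = a^{\alpha}_{\eta, i}$, I then extract an $s$-indiscernible tree $(a^{\alpha+1}_{\eta})$ over $Ab$ locally based on $(b_{\eta})$ and arrange (v) by an automorphism. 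Property (ii) is inherited from local basing over $Ab$; weakly-spread-out over $A$ transfers from $\bar{t}$ because Morley-over-$A$ is EM-invariant for $A$-indiscernible sequences; (iii) holds because every finite subtree of $(a^{\alpha+1}_{\eta})$ has the same $A$-type as a corresponding finite subtree of $(b_{\eta})$; and (iv) follows from a symmetry-plus-local-basing argument over $Ab$ exactly as in the proof of Lemma \ref{good sequence}.

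For $\kappa$ sufficiently large, the Erd\H{o}s-Rado extraction of \cite[Lemma 5.10]{kaplan2017kim} delivers from $(a^{\kappa}_{\eta})$ a weakly Morley tree over $A$ that remains $s$-indiscernible over $Ab$. Its all-zero path, composed with an automorphism fixing $Ab$ which places $a$ in the zeroth slot, is the desired sequence $I$: it is $Ab$-indiscernible by $s$-indiscernibility over $Ab$, weak tree Morley over $A$ because the underlying tree is weakly Morley over $A$, and $\ind^{K}$-Morley over $Ab$ because, by coherence, the $\beta$-th path element $a^{\kappa}_{\zeta_{\beta}}$ agrees with the root $a^{\beta}_{\emptyset}$ of the stage-$\beta$ tree, every earlier path element $a^{\kappa}_{\zeta_{\beta'}}$ (for $\beta' < \beta$) lies in the $\vartriangleright \emptyset$ part of that stage-$\beta$ tree, and (iv) then gives $a^{\kappa}_{\zeta_{\beta}} \ind^{K}_{Ab} (a^{\kappa}_{\zeta_{\beta'}})_{\beta' < \beta}$; type-definability of $\ind^{K}$-Morleyness (Lemma \ref{type definability of kim-independence}) ensures this property survives the final extraction. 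The main obstacle is carrying two Kim-independences over different bases through the same induction, since Lemma \ref{good sequence} alone handles only a single base; the chain condition (for Morley-over-$A$ sequences that happen to be $Ab$-indiscernible) together with Lemma \ref{technical lemma} (for a root that is simultaneously $\ind^{K}_{A}$-independent from $b$ and $\ind^{K}_{Ab}$-independent from the earlier tree material) is precisely what closes the inductive step.
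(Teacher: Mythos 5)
Your proposal is correct and follows essentially the same route as the paper's proof: the same inductive construction of trees that are $s$-indiscernible over $Ab$, weakly spread out over $A$, Kim-independent from $b$ over $A$, and with root Kim-independent over $Ab$ from the rest, with the successor step handled exactly as in the paper by the chain condition (to make the Morley-over-$A$ sequence of trees $Ab$-indiscernible and still $\ind^{K}_{A}$-independent from $b$) followed by Lemma \ref{technical lemma}, local basing over $Ab$, an automorphism for coherence, and an Erd\H{o}s--Rado extraction at the end. Your extra opening observation about $\mathrm{tp}(a/Ab)$ not Kim-dividing over $Ab$ is true but not needed, and your more explicit justification of why condition (iv) passes to the all-zeros path matches what the paper leaves implicit.
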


\begin{proof}
By induction on $\alpha$, we will construct trees $(a^{\alpha}_{\eta})_{\eta \in \mathcal{T}_{\alpha}}$ satisfying the following:
\begin{enumerate}
\item For all $\eta \in \mathcal{T}_{\alpha}$, $a^{\alpha}_{\eta} \models \text{tp}(a/Ab)$.
\item The tree $(a^{\alpha}_{\eta})_{\eta \in \mathcal{T}_{\alpha}}$ is $s$-indiscernible over $Ab$ and weakly spread out over $A$.
\item If $\alpha$ is a successor, then $a^{\alpha}_{\emptyset} \ind^{K}_{Ab} a^{\alpha}_{\vartriangleright \emptyset}$.
\item $(a^{\alpha}_{\eta})_{\eta \in \mathcal{T}_{\alpha}} \ind^{K}_{A} b$.
\item If $\alpha < \beta$, then $a^{\beta}_{\iota_{\alpha \beta}(\eta)} = a^{\alpha}_{\eta}$ for all $\eta \in \mathcal{T}_{\alpha}$.	
\end{enumerate}
Put $a^{0}_{\emptyset} = a$ and for $\delta$ limit, if we are given $(a^{\alpha}_{\eta})_{\eta \in \mathcal{T}_{\alpha}}$ for every $\alpha < \delta$, we can define $(a^{\delta}_{\eta})_{\eta \in \mathcal{T}_{\delta}}$ by setting $a^{\delta}_{\iota_{\alpha\delta}(\eta)} = a^{\alpha}_{\eta}$ for all $\alpha < \delta$ and $\eta \in \mathcal{T}_{\alpha}$, which is well-defined by (5) and is easily seen to satisfy the requirements.

Suppose now we are given $(a^{\alpha}_{\eta})_{\eta \in \mathcal{T}_{\alpha}}$.  Let $J = \langle (a^{\alpha}_{\eta,i})_{\eta \in \mathcal{T}_{\alpha}} : i < \omega \rangle$ be a mutually $s$-indiscernible Morley sequence over $A$ with $(a^{\alpha}_{\eta,0})_{\eta \in \mathcal{T}_{\alpha}} = (a^{\alpha}_{\eta})_{\eta \in \mathcal{T}_{\alpha}}$, which exists by Lemma \ref{Morley mutual}.  By (4), symmetry, and the Chain Condition, Fact \ref{basic Kim-independence facts}(5) we may assume $J$ is $Ab$-indiscernible and $J \ind^{K}_{A} b$.  By Lemma \ref{technical lemma}, there is $a_{*} \equiv_{Ab} a$ such that $a_{*} \ind^{K}_{Ab} J$ and $a_{*}J \ind^{K}_{A} b$.  After defining a tree $(c_{\eta})_{\eta \in \mathcal{T}_{\alpha+1}}$ by $c_{\emptyset} = a_{*}$ and $c_{\langle i \rangle \frown \eta} = a^{\alpha}_{\eta,i}$ for all $\eta \in \mathcal{T}_{\alpha}$, these conditions on $a_{*}$ imply that $(c_{\eta})_{\eta \in \mathcal{T}_{\alpha}}$ satisfy (3) and (4) respectively.  Let $(a^{\alpha+1}_{\eta})_{\eta \in \mathcal{T}_{\alpha+1}}$ be any tree $s$-indiscernible over $Ab$ locally based on $(c_{\eta})_{\eta \in \mathcal{T}_{\alpha}}$ over $Ab$.  This still satisfies (2) by Lemma \ref{mutual preservation}. Moreover, as $c_{\unrhd \langle i \rangle} \models \mathrm{tp}((a^{\alpha}_{\eta})_{\eta \in \mathcal{T}_{\alpha}}/Ab)$ for all $i < \omega$, it follows from local basedness that $a^{\alpha+1}_{\unrhd \langle i \rangle} \models \mathrm{tp}((a^{\alpha}_{\eta})_{\eta \in \mathcal{T}_{\alpha}}/Ab)$ for all $i < \omega$ as well.  Hence, by an automorphism over $Ab$, we may assume $a^{\alpha+1}_{0 \frown \eta} = a^{\alpha}_{\eta}$ for all $\eta \in \mathcal{T}_{\alpha}$, which ensures the constructed tree satisfies (5), and (1)-(4) are easy to verify.

Given $(a^{\kappa}_{\eta})_{\eta \in \mathcal{T}_{\kappa}}$ for $\kappa$ sufficiently large, we may, by Erd\H{o}s-Rado (see, e.g., \cite[Lemma 5.10]{kaplan2017kim}), obtain a weak Morley tree $(b_{\eta})_{\eta \in \mathcal{T}_{\omega}}$ over $A$ satisfying (1)\textemdash(4).  Then the sequence $I = \langle a_{i} : i < \omega \rangle$ defined by $a_{i} = b_{\zeta_{i}}$ for all $i < \omega$ is a weak tree Morley sequence over $A$, as it is a path in a weak Morley tree over $A$, but by (3), we have $a_{i} \ind^{K}_{Ab} a_{<i}$ for all $i$, so $I$ is $\ind^{K}$-Morley over $Ab$ as well. 
\end{proof}

\subsection{Transitivity and witnessing}

The following theorem establishes the transitivity of Kim-independence in NSOP$_{1}$ theories with existence.  

\begin{thm} \label{transitivity theorem}
Suppose $T$ is NSOP$_{1}$ with existence.  Then if $A \subseteq B$, $a \ind^{K}_{A} B$ and $a \ind^{K}_{B} c$, then $a \ind^{K}_{A} Bc$. 
\end{thm}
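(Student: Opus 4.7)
The plan is to prove the theorem by reducing, as in the proofs of Lemma \ref{technical lemma} and related results in this paper, to an application of Kim's lemma for weak tree Morley sequences (Fact \ref{slightly less basic Kim-independence facts}(2)). Specifically, to show $a\ind^{K}_{A}Bc$, I will argue that every formula $\varphi(x;b,c)\in\text{tp}(a/Bc)$, with $b$ a tuple from $B$, does not Kim-divide over $A$ by constructing a weak tree Morley sequence $\langle(b_{i},c_{i}):i<\omega\rangle$ over $A$ with $(b_{0},c_{0})=(b,c)$ such that $\{\varphi(x;b_{i},c_{i}):i<\omega\}$ is consistent. The sequence will be arranged to be $Aa$-indiscernible so that consistency is witnessed by $a$ itself, using both premises to transport $a$ along the sequence.

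Following the pattern of Proposition \ref{extra good sequence}, the heart of the argument will be a tree induction: by recursion on $\alpha$, I will construct a tree of pairs $(d^{\alpha}_{\eta},e^{\alpha}_{\eta})_{\eta\in\mathcal{T}_{\alpha}}$ satisfying
\begin{enumerate}
\item $(a,d^{\alpha}_{\eta},e^{\alpha}_{\eta})\equiv_{A}(a,B,c)$ for every $\eta\in\mathcal{T}_{\alpha}$ (so each node preserves the joint type with $a$, and in particular $\varphi(a;d^{\alpha}_{\eta}\upharpoonright b, e^{\alpha}_{\eta})$ holds and the nodewise independences $a\ind^{K}_{A}d^{\alpha}_{\eta}$ and $a\ind^{K}_{d^{\alpha}_{\eta}}e^{\alpha}_{\eta}$ are inherited by invariance from the premises);
\item the tree is $s$-indiscernible over $A$ and weakly spread out over $A$;
\item coherence under the canonical inclusions $\iota_{\alpha\beta}$.
\end{enumerate}
The base case sets $(d^{0}_{\emptyset},e^{0}_{\emptyset})=(B,c)$; the limit case is a union as in Proposition \ref{extra good sequence}. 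Then Erd\H{o}s-Rado at a sufficiently large $\kappa$ (cf.~\cite[Lemma 5.10]{kaplan2017kim}) extracts a weak Morley tree with the same properties, whose leftmost path is a weak tree Morley sequence over $A$ starting with $(B,c)$ along which $a$-indiscernibility is preserved, as required.

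The successor step is where both hypotheses come into play and the difficulty lies. Given the tree $(d^{\alpha}_{\eta},e^{\alpha}_{\eta})$ at stage $\alpha$, form a Morley sequence $\langle (d^{\alpha}_{\eta,i},e^{\alpha}_{\eta,i})_{\eta}:i<\omega\rangle$ over $A$ of the tree, with $i=0$ giving the original. Apply the chain condition (Fact \ref{basic Kim-independence facts}(4)) — enabled by (1) — to arrange this Morley sequence to be $Aa$-indiscernible. Then add a new root $(d^{*},e^{*})$ with $(a,d^{*},e^{*})\equiv_{A}(a,B,c)$ that is $\ind^{K}_{A}$-independent from the Morley sequence of trees. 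For the $B$-coordinate $d^{*}$, use Lemma \ref{technical lemma} together with the strong extension (Fact \ref{strong extension}), drawing on $a\ind^{K}_{A}B$; for the $c$-coordinate $e^{*}$, use the strong independence theorem (Lemma \ref{strongIT}) to coordinate $e^{*}$ with $d^{*}$, drawing on $a\ind^{K}_{B}c$. Finally, apply the modelling property (Fact \ref{modeling}) to obtain the $s$-indiscernibility of the new tree.

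The main obstacle is precisely this successor step: constructing $(d^{*},e^{*})$ satisfying simultaneously the nodewise type condition $(a,d^{*},e^{*})\equiv_{A}(a,B,c)$ and the tree-level independence needed to preserve weak spread-out. The two premises enter here in a complementary fashion — $a\ind^{K}_{A}B$ provides the extension on the first coordinate, while $a\ind^{K}_{B}c$ provides, relative to the evolving first coordinate, the extension on the second — and reconciling them coherently via Lemmas \ref{technical lemma} and \ref{strongIT} while keeping track of all the independence relations at each level of the tree is the technical heart of the argument. This is the analogue of the coheir-based construction in \cite{kaplan2019transitivity}, with the tree-induction serving, as the authors explain in the introduction, as the set-based substitute for heir/coheir arguments available only over models.
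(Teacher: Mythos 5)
There is a genuine gap, and it sits exactly where you yourself locate it: the successor step of your tree induction cannot be carried out from the inductive invariants you state. Your condition (1) is purely nodewise\textemdash $(a,d^{\alpha}_{\eta},e^{\alpha}_{\eta})\equiv_{A}(a,B,c)$\textemdash and it only yields $a\ind^{K}_{A}d^{\alpha}_{\eta}$ and $a\ind^{K}_{d^{\alpha}_{\eta}}e^{\alpha}_{\eta}$ for each single node. But to ``apply the chain condition (Fact \ref{basic Kim-independence facts}(4)) \ldots to arrange this Morley sequence to be $Aa$-indiscernible'' you need $a\ind^{K}_{A}(d^{\alpha}_{\eta},e^{\alpha}_{\eta})_{\eta\in\mathcal{T}_{\alpha}}$, i.e.\ Kim-independence of $a$ from the \emph{whole tree} over $A$; nothing in (1)--(3) provides this. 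The same global independence is needed again at the modeling/extraction stage: if you extract $s$-indiscernibility only over $A$, local basedness destroys the joint types of the nodes with $a$, and to work over $Aa$ instead you must re-verify weak spread-outness over $A$, which again requires the chain condition with rider $a$ over the full tree. If you add ``$a\ind^{K}_{A}(\text{whole tree})$'' as an invariant, its base case is $a\ind^{K}_{A}Bc$\textemdash the statement being proved\textemdash so the construction becomes circular; and trying instead to restore the nodewise types across all copies by iterating Lemma \ref{strongIT} or the independence theorem runs into the same obstruction, since each amalgamation step needs $a$ to be Kim-independent over $A$ from the ever larger part already treated. In short, building the tree out of copies of $Bc$ while holding $a$ fixed cannot be made to close.

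The paper's proof turns the construction around: the moving object is $a$, not $Bc$, and the auxiliary base is $B$. Proposition \ref{extra good sequence} (whose proof is the tree induction, with invariants $a^{\alpha}_{\emptyset}\ind^{K}_{Ab}a^{\alpha}_{\vartriangleright\emptyset}$ and $(\text{tree})\ind^{K}_{A}b$ that \emph{can} be secured, via extension, Lemma \ref{good sequence} and Lemma \ref{technical lemma}) produces from $a\ind^{K}_{A}B$ a single sequence $I=\langle a_{i}:i<\omega\rangle$ with $a_{0}=a$ which is simultaneously a weak tree Morley sequence over $A$ and an $\ind^{K}$-Morley sequence over $B$. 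The second hypothesis $a\ind^{K}_{B}c$ is then used, via the independence theorem over $B$ along the $\ind^{K}$-Morley-over-$B$ sequence (whose elements are Lascar-equivalent over $B$), to realize $\mathrm{tp}(c/Ba)$ at every $a_{i}$, i.e.\ to replace $I$ by $I'\equiv_{Ba}I$ which is $Bc$-indiscernible; Kim's lemma for weak tree Morley sequences over $A$ then gives $Bc\ind^{K}_{A}a$, and symmetry finishes. So your reduction to Fact \ref{slightly less basic Kim-independence facts}(2) is the right target, but the witnessing configuration has to be produced on the $a$-side, where the required tree-level independence invariants come from the hypotheses rather than from the conclusion.
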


\begin{proof}
By Proposition \ref{extra good sequence} and the assumption that $a \ind^{K}_{A} B$, there is a sequence $I = \langle a_{i} : i < \omega \rangle$ with $a_{0} = a$ such that $I$ is an $\ind^{K}$-Morley sequence over $B$ and a weak tree Morley sequence over $A$.  As $c \ind^{K}_{B} a$, by symmetry, and $I$ is $\ind^{K}$-Morley over $B$, there is $I' \equiv_{Ba}I$ such that $I'$ is $Bc$-indiscernible.  Because $I'$ is also a weak tree Morley sequence over $A$, it follows by Kim's lemma that $Bc \ind^{K}_{A} a$.  By symmetry, we conclude. 
\end{proof}

\begin{prop} Assume $T$ is NSOP$_1$ with existence. The following are equivalent.
\begin{enumerate}
\item $a \ind^{K}_{A} b$
\item There is a model $M\supseteq A$ such that  $M \ind^{K}_{A} ab$ (or $M \ind_{A} ab$) and $a\ind^{K}_M b$.
\item There is a model $M\supseteq A$ such that  $M\ind^{K}_A a$ (or $M\ind_{A} a$) and $a\ind^{K}_M b$.
\end{enumerate}
\end{prop}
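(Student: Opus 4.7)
The two implications $(2) \Rightarrow (3)$ and $(3) \Rightarrow (1)$ are short. $(2) \Rightarrow (3)$ is just monotonicity in the right argument, applied separately to the $\ind^{K}$ and $\ind$ versions. For $(3) \Rightarrow (1)$, Kim-dividing is witnessed by a Morley sequence and Morley sequences are indiscernible, so Kim-dividing implies dividing, hence non-forking implies non-Kim-dividing; thus the $M \ind_{A} a$ case reduces to the $M \ind^{K}_{A} a$ case. Then by symmetry $a \ind^{K}_{A} M$, and Theorem \ref{transitivity theorem} combined with $a \ind^{K}_{M} b$ gives $a \ind^{K}_{A} Mb$, whence $a \ind^{K}_{A} b$ by monotonicity.

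The main content is $(1) \Rightarrow (2)$. The plan is to build $M$ using Lemma \ref{strongIT} and then verify $a \ind^{K}_{M} b$ using Theorem \ref{transitivity theorem} together with the fact that $\ind^{K}$-Morley sequences witness Kim-dividing (the other main result of this section). For the construction, the existence axiom produces a small model $M_{0} \supseteq A$ and, again by existence, a non-forking extension $M_{0}' \equiv_{A} M_{0}$ satisfying $M_{0}' \ind_{A} ab$; in particular $M_{0}' \ind^{K}_{A} a$ and $M_{0}' \ind^{K}_{A} b$. Applying Lemma \ref{strongIT} with $a_{0} = a_{1} := M_{0}'$, $b := b$, $c := a$ (whose four hypotheses all hold) outputs a model $M \equiv^{L}_{A} M_{0}'$ with the triangulated independences $M \ind^{K}_{A} ab$, $a \ind^{K}_{A} Mb$, and $b \ind^{K}_{A} Ma$. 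This $M$ already satisfies the first clause of $(2)$, so only $a \ind^{K}_{M} b$ remains.

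To verify $a \ind^{K}_{M} b$, I argue by contradiction: suppose some $\varphi(x; m, b) \in \mathrm{tp}(a/Mb)$, with $m$ a tuple from $M$, Kim-divides over $M$. By existence, fix a non-forking Morley sequence $\langle b_{i} : i < \omega \rangle$ over $M$ with $b_{0} = b$; by Kim's Lemma (Fact \ref{basic Kim-independence facts}(3)), the set $\{\varphi(x; m, b_{i})\}_{i < \omega}$ is inconsistent. The key step is to promote $\langle b_{i} \rangle$ into an $\ind^{K}$-Morley sequence over $A$: the $M$-indiscernibility of $\langle b_{i} \rangle$ gives $b_{i} \equiv_{M} b$ and hence $b_{i} \ind^{K}_{A} M$ for every $i$ (using that $b \ind^{K}_{A} M$ follows from $M \ind^{K}_{A} ab$ by symmetry and monotonicity), while $b_{i} \ind_{M} b_{<i}$ yields $b_{i} \ind^{K}_{M} b_{<i}$. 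Theorem \ref{transitivity theorem} then produces $b_{i} \ind^{K}_{A} M b_{<i}$, and monotonicity in the right argument produces $b_{i} \ind^{K}_{A} m b_{<i}$. Hence $\langle (m, b_{i}) : i < \omega \rangle$, with $m$ constant, is an $\ind^{K}$-Morley sequence over $A$ starting with $(m, b)$, and the witnessing theorem forces $\varphi(x; m, b)$ to Kim-divide over $A$. But $\varphi(x; m, b) \in \mathrm{tp}(a/Amb)$, and $a \ind^{K}_{A} Mb$ gives $a \ind^{K}_{A} mb$ by monotonicity; no formula in $\mathrm{tp}(a/Amb)$ Kim-divides over $A$, a contradiction.

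The main obstacle is the last argument: passing from a Morley sequence over the model $M$ to an $\ind^{K}$-Morley sequence over the smaller set $A$ while preserving the inconsistency witness. This is precisely where transitivity (the main theorem of this section) is needed to combine $b_{i} \ind^{K}_{A} M$ and $b_{i} \ind^{K}_{M} b_{<i}$, and where the witnessing theorem is needed to convert the resulting inconsistency back into Kim-dividing over $A$---which then clashes with the triangulated output of Lemma \ref{strongIT}.
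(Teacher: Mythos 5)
Your outer reductions are fine and match the paper: (2)$\Rightarrow$(3) is monotonicity, and (3)$\Rightarrow$(1) is exactly the paper's use of symmetry and Theorem \ref{transitivity theorem} (plus the observation that non-forking implies non-Kim-dividing). The problem is the heart of (1)$\Rightarrow$(2), namely the verification that your $M$ (produced via Lemma \ref{strongIT}) satisfies $a \ind^{K}_{M} b$. From $b_{i} \ind^{K}_{A} M b_{<i}$ you conclude that $\langle (m,b_{i}) : i < \omega \rangle$ is an $\ind^{K}$-Morley sequence over $A$; this conflates $b_{i} \ind^{K}_{A} m b_{<i}$ with $m b_{i} \ind^{K}_{A} m b_{<i}$, and the latter is false whenever $m \not\subseteq \mathrm{acl}(A)$. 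Indeed, $(m,b_{1}) \ind^{K}_{A} (m,b_{0})$ would give $m \ind^{K}_{A} m$ by monotonicity, while for non-algebraic $m$ the formula $x = m$ Kim-divides over $A$ (any nonforking Morley sequence over $A$ starting with $m$ is non-constant, hence $\{x = m_{i}\}$ is $2$-inconsistent). Since the parameters $m$ come from a model containing $A$, you cannot avoid this. The failure is structural: Kim-independence over $A$ cannot carry the tuple $m$ on both sides (no base monotonicity, no reflexivity), which is precisely why the paper does not try to deduce $a \ind^{K}_{M} b$ from an independence-theorem configuration over $A$. Instead, the paper's (1)$\Rightarrow$(2) takes the nonforking Morley sequence $I$ in $\mathrm{tp}(a/A)$ made $Ab$-indiscernible via the chain condition, invokes \cite[Lemma 2.17]{dobrowolski2019independence} to get a model $N \supseteq A$ with $N \ind_{A} Ib$ over which $I$ is a coheir sequence, passes by pigeonhole to a subsequence whose elements have the same type over $Nb$, and then Kim's lemma over $N$ yields $a' \ind^{K}_{N} b$; an $Ab$-automorphism moving $a'$ to $a$ finishes. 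The coheir sequence over the auxiliary model is the device that replaces the base-shift you attempt.

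Two further issues. First, even granting your sequence, Theorem \ref{witnessing} is applied in the wrong direction: it says that Kim-dividing over $A$ forces inconsistency along an $\ind^{K}$-Morley sequence over $A$, whereas you need the converse (inconsistency along such a sequence forces Kim-dividing over $A$). That converse is in fact available in the paper (it is Lemma \ref{weird witnessing} with $n=0$, or a short induction using extension and the independence theorem), so this part is repairable by a correct citation, but it cannot be applied to your sequence of pairs for the reason above. Second, your construction only delivers $M \ind^{K}_{A} ab$, not the parenthetical stronger form $M \ind_{A} ab$ in clause (2); the paper's argument produces genuine non-forking independence $N \ind_{A} a'b$, which is what makes all the stated variants of (2) and (3) equivalent to (1). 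So even if the verification of $a \ind^{K}_{M} b$ were repaired, the full statement would still not be proved.
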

\begin{proof}
(1)$\Rightarrow$(2) Since  $a\ind^{K}_A b$, there is a Morley sequence $I=\langle  a_i :  i<\omega \rangle$ over $A$ with $a_0=a$ such that $I$ is $Ab$-indiscernible.  By \cite[Lemma 2.17]{dobrowolski2019independence}, there is a model $N$ containing $A$ such that $N\ind_A I$ and 
$I$ is a coheir sequence over $N$.  By compactness and extension  we can clearly assume the length of $I$ is arbitrarily large, and $N\ind_{A}Ib$. Hence by the pigeonhole principle,
there is an infinite subsequence $J$ of $I$ such that all the tuples in $J$ have the same type over $Nb$. Thus, for $a'\in J$, we have $a'\ind^{K}_{N} b$ and $N\ind_{A} a'b$.
Hence  $M=f(N)$ is a desired model, where $f$ is   an $Ab$-automorphism sending $a'$
to $a$.

(2)$\Rightarrow$(3) Clear. 

(3)$\Rightarrow$(1) follows from transitivity and symmetry of $\ind^{K}$. 
\end{proof}

\begin{prop} \label{transfer to models}
Suppose $T$ satisfies the existence axiom.  The following are equivalent for a cardinal $\kappa \geq |T|$:
\begin{enumerate}
\item $T$ is NSOP$_{1}$.  
\item There is no increasing continuous sequence $\langle A_{i} : i < \kappa^{+} \rangle$ of parameter sets and finite tuple $d$ such that $|A_{i}| \leq \kappa$ and $d \nind^{K}_{A_{i}} A_{i+1}$ for all $i < \kappa^{+}$.  
\item There is no set $A$ of parameters of size $\kappa^{+}$ and $p(x) \in S(A)$ with $x$ a finite tuple of variables such that for some increasing and continuous sequence of sets $\langle A_{i} : i < \kappa^{+} \rangle$ with union $A$, we have $|A_{i}| \leq \kappa$ and $p$ Kim-divides over $A_{i}$ for all $i < \kappa^{+}$.  
\end{enumerate}
\end{prop}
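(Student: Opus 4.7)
The equivalence $(2)\Leftrightarrow(3)$ is a routine re-indexing argument. For $\neg(2)\Rightarrow\neg(3)$, set $A:=\bigcup_i A_i$ and $p:=\mathrm{tp}(d/A)$; then $\mathrm{tp}(d/A_{i+1})\subseteq p$ already Kim-divides over $A_i$. For $\neg(3)\Rightarrow\neg(2)$, fix $d\models p$, and for each $i$ choose a finite-parameter formula $\varphi_i(x;e_i)\in p$ Kim-dividing over $A_i$. By regularity of $\kappa^+$ and $|e_i|<\omega\leq\kappa$, each $e_i$ lies in some $A_{f(i)}$, and by thinning the chain along the continuous increasing function $g$ given by $g(0)=0$, $g(j+1)=\max(g(j)+1,f(g(j)))$, $g(\delta)=\sup_{j<\delta}g(j)$, we obtain $e_{g(j)}\in A_{g(j+1)}$, and hence $d\nind^{K}_{A_{g(j)}}A_{g(j+1)}$ for all $j<\kappa^+$.

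The main implication $(1)\Rightarrow(2)$ I plan to prove by contradiction. Assume $\langle A_i:i<\kappa^+\rangle$ and $d$ violate $(2)$, set $A:=\bigcup_i A_i$ and $p:=\mathrm{tp}(d/A)$, and consider
\[
X_\kappa \;:=\; \{B\subseteq A:|B|\leq\kappa\text{ and }p\text{ does not Kim-divide over }B\}.
\]
The key step is to verify a local-character statement at scale $\kappa$: $X_\kappa$ is closed under increasing continuous chains of length $\leq\kappa$ and cofinal in $[A]^{\leq\kappa}$. This follows from the strategy behind \cite[Theorem 3.9]{kaplan2019local} underlying Fact \ref{local character}, simply replacing $|T|$ by $\kappa$ and reading the closed-unbounded-set argument in the lattice of $\kappa$-sized subsets of $A$ in place of $|T|$-sized elementary submodels of a monster; the existence axiom supplies the Morley sequences over sets needed. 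Given $X_\kappa$, an alternating construction against the chain $\langle A_i\rangle$ — at each stage either enlarging a subset of the current $A_i$ up to an element of $X_\kappa$ or absorbing such an element back into a later $A_j$ — produces at a suitable limit of countable cofinality some $A_{i^*}\in X_\kappa$, contradicting $\mathrm{tp}(d/A_{i^*+1})\subseteq p$ Kim-dividing over $A_{i^*}$. The principal obstacle is precisely this local-character step: since base monotonicity for Kim-independence fails in general NSOP$_1$ theories (already in $\mathrm{ACF}_0$, where $x^2=e$ Kim-divides over $\mathbb{Q}$ but not over $\mathbb{Q}(e)$), the naive strategy of extending the $A_i$ to a chain of elementary submodels and applying Fact \ref{local character} directly would not let one transfer non-Kim-dividing back to the set level; one must instead scale the local-character machinery directly in the subset lattice.

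For $(3)\Rightarrow(1)$, argue the contrapositive. Fix a formula $\varphi(x;y)$ witnessing SOP$_1$, and by compactness realize a tall $s$-indiscernible SOP$_1$-pattern — a tree of parameters of length $\kappa^+$ and sufficient arity. Along the ``$0$-spine'' let $d$ realize the consistent branch formulas and accumulate the tree parameters level-by-level in blocks of size $\leq\kappa$ to form the continuous chain $\langle A_i\rangle$; set $A:=\bigcup_i A_i$ and $p:=\mathrm{tp}(d/A)$. At each $i$, the SOP$_1$ inconsistency clause combined with $s$-indiscernibility (Fact \ref{modeling}) and Kim's Lemma for Morley sequences (Fact \ref{basic Kim-independence facts}(3)) produces a Morley sequence over $A_i$ witnessing that a suitable branch formula in $p$ Kim-divides over $A_i$, establishing $\neg(3)$.
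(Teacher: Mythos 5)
Your handling of (2)$\Leftrightarrow$(3) is fine and matches the paper, and for (3)$\Rightarrow$(1) the paper itself simply cites \cite[Theorem 3.9]{kaplan2019local}, so the sketchiness there is tolerable. The genuine gap is in your main implication (1)$\Rightarrow$(2): everything rests on the ``local-character at scale $\kappa$'' claim that $X_\kappa$ is closed and cofinal in $[A]^{\leq\kappa}$, and you justify this only by asserting that the proof of Fact \ref{local character} goes through ``simply replacing $|T|$ by $\kappa$'' and $|T|$-sized elementary submodels by $\kappa$-sized subsets. That adaptation is not available: the Kaplan--Ramsey--Shelah argument is a model-based argument, working with elementary submodels $N \prec M$ and with heir/coheir (finitely satisfiable) and invariant Morley sequences to witness Kim-dividing, and none of these notions have analogues for an arbitrary subset $B \subseteq A$; the existence axiom supplies nonforking Morley sequences over sets, but it does not convert those steps. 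Indeed, the whole point of this paper (stated in the introduction) is that such heir/coheir arguments must be replaced by separate tree-induction arguments, and no set-based local character statement --- even the unboundedness of $X_\kappa$ --- is proved here or in the references you invoke. Note also that statement (2) is itself a weak (chain) form of set local character, so your plan derives the statement to be proved from a strictly stronger unproven club statement; the alternating-chain argument at the end is fine, but only once that club property is granted.

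The paper avoids this entirely and, notably, its proof of (1)$\Rightarrow$(2) runs through the transitivity theorem, which your proposal never uses. Concretely: after reducing to $\kappa = |T|$, one builds by extension a parallel chain $\langle A'_i \rangle$ with $A'_{\leq i} \equiv A_{\leq i}$ together with models $M_i \supseteq A'_i$ of size $|T|$ such that $A'_{i+1} \ind^{K}_{A'_i} M_i$, and takes $d'$ copying $d$ over the new chain. If $d' \ind^{K}_{M_i} M_{i+1}$ for some $i$, then $d' \ind^{K}_{M_i} A'_{i+1}$, and combining with $M_i \ind^{K}_{A'_i} A'_{i+1}$ via symmetry and Theorem \ref{transitivity theorem} yields $d' \ind^{K}_{A'_i} A'_{i+1}$, a contradiction; thus the Kim-dividing chain transfers to a continuous chain of models, where the model-based Fact \ref{local character} applies as a black box. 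To repair your write-up you would either need to reproduce this transfer-to-models step (using transitivity), or actually prove the $\kappa$-scale club property of $X_\kappa$ over arbitrary sets, which is a substantial open-ended task rather than a routine change of parameters.
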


\begin{proof}
(1)$\implies$(2)  It suffices to show that, given any increasing continuous sequence $\langle A_{i} : i < \kappa^{+} \rangle$ of parameter sets and tuple $d$ such that $|A_{i}| \leq \kappa$ and $d \nind^{K}_{A_{i}} A_{i+1}$ for all $i < \kappa^{+}$, there is a continuous increasing sequence of models $\langle M_{i} : i < \kappa^{+} \rangle$ and a finite tuple $d'$ such that $|M_{i}| \leq \kappa$ and $d' \nind^{K}_{M_{i}} M_{i+1}$ for all $i < \kappa^{+}$.  This follows from Fact \ref{local character}, since the existence of such a sequence of models implies $T$ has SOP$_{1}$.  Moreover, after naming constants, we may assume $\kappa = |T|$.  

So suppose we are given $\langle A_{i} : i < |T|^{+} \rangle$, an increasing continuous sequence of sets of parameters with $|A_{i}| \leq |T|$ for all $i < |T|^{+}$.  Let $A = \bigcup_{i < |T|^{+}} A_{i}$, and suppose further that we are given some tuple $d$ such that $d \nind^{K}_{A_{i}} A_{i+1}$ for all $i < |T|^{+}$.  By induction on $i < |T|^{+}$ we will build increasing and continuous sequences $\langle A'_{i} : i < |T|^{+} \rangle$ and $\langle M_{i} : i < |T|^{+} \rangle$ satisfying the following for all $i < |T|^{+}$:
\begin{enumerate}
\item $A'_{0} = A_{0}$ and $A'_{\leq i} \equiv A_{\leq i}$.  
\item $M_{i}\models T$ with $|M_{i}| = |T|$ and $A'_{i} \subseteq M_{i}$.
\item $A'_{i+1} \ind^{K}_{A'_{i}} M_{i}$.  
\end{enumerate}
To begin, we define $A'_{0} = A_{0}$ and take $M_{0}$ be any model containing $A'_{0}$ of size $|T|$.  Given $A'_{\leq i}$ and $M_{\leq i}$ satisfying the requirements, we pick $A''_{i+1}$ such that $A'_{\leq i} A''_{i+1} \equiv A_{\leq i+1}$.  Then we apply extension, to obtain $A'_{i+1} \equiv_{A'_{\leq i}} A''_{i+1}$ such that $A'_{i+1} \ind^{K}_{A'_{i}} M_{i}$.  Note that $A'_{\leq i+1} \equiv A_{\leq i+1}$.  We define $M_{i+1}$ to be any model containing $A'_{i+1}M_{i}$ of size $|T|$.  This satisfies the requirements.

At limit $\delta$, we define $A'_{\delta} = \bigcup_{i < \delta} A'_{i}$ and $M_{\delta} = \bigcup_{i < \delta} M_{i}$.  This clearly satisfies (1) and (2) and (3) is trivial.  Therefore this completes the construction.  

Let $M = \bigcup_{i < |T|^{+}} M_{i}$.  Choose $d'$ such that $d\langle A_{i} : i < |T|^{+} \rangle \equiv d' \langle A'_{i} : i < |T|^{+} \rangle$, which is possible by (1).  Then we have $d' \nind^{K}_{A'_{i}} A'_{i+1}$ for all $i < |T|^{+}$.  

Towards contradiction, suppose that there is some $i < |T|^{+}$ with the property that $d' \ind^{K}_{M_{i}} M_{i+1}$.  Then, in particular, we have $d' \ind^{K}_{M_{i}} A'_{i+1}$.  Additionally, because $M_{i} \ind^{K}_{A'_{i}} A'_{i+1}$, we know, by symmetry and transitivity, that $A'_{i+1} \ind^{K}_{A'_{i}} d'M_{i}$.  By symmetry once more, we get $d' \ind^{K}_{A'_{i}} A'_{i+1}$, a contradiction.  This shows that $d' \nind^{K}_{M_{i}} M_{i+1}$ for all $i < |T|^{+}$, completing the proof of this direction.

(2)$\implies$(3)  Suppose (3) fails, i.e.~we are given $A$ of size $\kappa^{+}$, $p \in S(A)$, and an increasing continuous sequence of sets $\langle A_{i} : i < \omega \rangle$ such that $|A_{i}| \leq \kappa$ and $p$ Kim-divides over $A_{i}$ for all $i < \kappa^{+}$.  We will define an increasing continuous sequence of ordinals $\langle \alpha_{i} : i < \kappa^{+} \rangle$ such that $\alpha_{i} \in \kappa^{+}$ and $p \upharpoonright A_{\alpha_{i+1}}$ Kim-divides over $A_{\alpha_{i}}$ for all $i < \kappa^{+}$.  We set $\alpha_{0} = 0$ and given $\langle \alpha_{j} : j \leq i \rangle$, we know that there is some formula $\varphi(x;a_{i+1}) \in p$ that Kim-divides over $A_{\alpha_{i}}$, by our assumption on $p$.  Let $\alpha_{i+1}$ be the least ordinal $< \kappa^{+}$ such that $a_{i+1}$ is contained in $A_{\alpha_{i+1}}$.  For limit $i$, if we are given $\langle \alpha_{j} : j < i \rangle$, we put $\alpha_{i} = \sup_{j < i} \alpha_{j}$.  Then we define $\langle A_{i} : i < \kappa^{+} \rangle$ by $A'_{i} = A_{\alpha_{i}}$ for all $i < \kappa$, and let $d \models p$ be any realization.  By construction, we have $d \nind^{K}_{A'_{i}} A'_{i+1}$ for all $i < \kappa^{+}$, which witnesses the failure of (2).  

(3)$\implies$(1) This was established in \cite[Theorem 3.9]{kaplan2019local}.  
\end{proof}

\begin{rem}
In \cite[Proposition 4.6]{casanovasmore} it is shown that in every theory with TP$_{2}$, there is an increasing chain of sets $\langle D_{i} : i < |T|^{+} \rangle$ and tuple $d$ such that $|D_{i}| \leq |T|$ and $d \nind^{K}_{D_{i}} D_{i+1}$ for all $i < |T|^{+}$.  Hence, for non-simple NSOP$_{1}$ theories, the condition of continuity in the statement of Proposition \ref{transfer to models} is essential.  
\end{rem}

The following theorem will be referred to as `witnessing.'  It shows that $\ind^{K}$-Morley sequences are witnesses to Kim-dividing.  Over models this was established in \cite[Theorem 5.1]{kaplan2019transitivity}, however for us it will be deduced as a corollary of Proposition \ref{transfer to models}.  

\begin{thm} \label{witnessing}
Suppose $T$ is NSOP$_{1}$ with existence and $I = \langle a_{i} : i < \omega \rangle$ is an $\ind^{K}$-Morley sequence over $A$.  If $\varphi(x;a_{0})$ Kim-divides over $A$, then $\{\varphi(x;a_{i}) : i < \omega\}$ is inconsistent.
\end{thm}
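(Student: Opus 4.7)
The plan is to argue by contradiction and reduce to Proposition \ref{transfer to models}.  Suppose that $\{\varphi(x;a_{i}) : i < \omega\}$ is consistent, and set $\kappa = |T| + |A|$.  Using the type-definability of $\ind^{K}$-Morley sequences (Lemma \ref{type definability of kim-independence}(2)) together with compactness and the observation that $A$-indiscernibility of $I$ makes $\{\varphi(x;a_{i}) : i \in S\}$ consistent for every finite $S$, the first step is to extend $I$ to an $\ind^{K}$-Morley sequence $\langle a_{i} : i < \kappa^{+} \rangle$ over $A$ and to choose $b$ realizing $\{\varphi(x;a_{i}) : i < \kappa^{+}\}$.

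Next, define the continuous increasing chain $A_{i} = A \cup \{a_{j} : j < i\}$, so that $|A_{i}| \leq \kappa$ for every $i < \kappa^{+}$.  I would then show $b \nind^{K}_{A_{i}} A_{i+1}$ for all $i < \kappa^{+}$; this, combined with Proposition \ref{transfer to models} (the implication (1)$\Rightarrow$(2)), contradicts NSOP$_{1}$ and completes the proof.

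To establish the chain step, fix $i < \kappa^{+}$.  Since $a_{i} \equiv_{A} a_{0}$ by $A$-indiscernibility, $\varphi(x;a_{i})$ Kim-divides over $A$; combined with $b \models \varphi(x;a_{i})$ this gives $b \nind^{K}_{A} a_{i}$, and by symmetry (Fact \ref{basic Kim-independence facts}(2)) also $a_{i} \nind^{K}_{A} b$.  From the $\ind^{K}$-Morley property of $\langle a_{i} \rangle$ we have $a_{i} \ind^{K}_{A} a_{<i}$.  Suppose, toward contradiction in this sub-step, that $a_{i} \ind^{K}_{A a_{<i}} b$.  Then transitivity (Theorem \ref{transitivity theorem}) yields $a_{i} \ind^{K}_{A} a_{<i}b$, and monotonicity of $\ind^{K}$ gives $a_{i} \ind^{K}_{A} b$, contradicting the previous line.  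Hence $a_{i} \nind^{K}_{A a_{<i}} b$, and a final use of symmetry converts this to $b \nind^{K}_{A_{i}} a_{i}$; this is exactly $b \nind^{K}_{A_{i}} A_{i+1}$ since $A_{i+1} = A_{i} \cup \{a_{i}\}$.

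The main technical ingredient is really the first step: producing an $\ind^{K}$-Morley sequence of length $\kappa^{+}$ together with a uniform realizer of the formulas along it, which depends on type-definability of $\ind^{K}$-Morley sequences plus compactness.  Once the chain is set up, the rest is a short and clean deduction from transitivity and symmetry of $\ind^{K}$, using only that $\varphi(x;a_{0})$ Kim-divides over $A$.
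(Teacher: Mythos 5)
Your proposal is correct and follows essentially the same route as the paper: stretch the $\ind^{K}$-Morley sequence to length $\kappa^{+}$ (the paper does this after naming $A$ as constants so that $\kappa=|T|$, and it likewise relies on type-definability of $\ind^{K}$-Morley sequences for the stretching), take a realizer $d$ of all the $\varphi(x;a_{i})$, show $d\nind^{K}_{Aa_{<i}}a_{i}$ via transitivity (Theorem \ref{transitivity theorem}) and symmetry, and conclude by Proposition \ref{transfer to models}. The only differences are cosmetic (keeping $\kappa=|T|+|A|$ instead of naming constants, and a slightly reshuffled use of symmetry in the chain step).
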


\begin{proof}
Suppose towards contradiction that $\varphi(x;a_{0})$ Kim-divides over $A$ and $I = \langle a_{i} : i < \omega \rangle$ is an $\ind^{K}$-Morley sequence over $A$ such that $\{\varphi(x;a_{i}) : i < \omega\}$ is consistent.  By naming $A$ as constants, we may assume $|A| \leq |T|$.  We may stretch $I$ such that $I = \langle a_{i} : i < |T|^{+} \rangle$.  Define $A_{i} = Aa_{<i}$.  Then $\langle A_{i} : i < |T|^{+} \rangle$ is increasing and continuous and $|A_{i}| \leq |T|$.  Let $d \models \{\varphi(x;a_{i}) : i < |T|^{+} \}$.  We claim $d \nind^{K}_{A_{i}} A_{i+1}$ for all $i < |T|^{+}$.  If not, then for some $i < |T|^{+}$, we have $d \ind^{K}_{A_{i}} A_{i+1}$, or, in other words $d \ind^{K}_{Aa_{<i}} a_{i}$.  Since $I$ is an $\ind^{K}$-Morley sequence, we also have $a_{i} \ind^{K}_{A} a_{<i}$, hence $da_{<i} \ind^{K}_{A} a_{i}$, by transitivity (Theorem \ref{transitivity theorem}).  This entails, in particular, that $d \ind^{K}_{A} a_{i}$, which is a contradiction, since $\varphi(x;a_{i})$ Kim-divides over $A$.  This completes the proof. 
\end{proof}

\section{Low NSOP$_{1}$ theories} \label{low section}

This section is dedicated to proving that Lascar and Shelah strong types coincide in any low NSOP$_{1}$ theory with existence.  This generalizes the corresponding result of Buechler for low simple theories \cite{buechler1999lascar} (also independently discovered by Shami \cite{shami2000definability}).  

\begin{defn}
We say that the theory $T$ is \emph{low} if, for every formula $\varphi(x;y)$, there is some $k < \omega$, such that if $I = \langle a_{i} : i < \omega \rangle$ is an indiscernible sequence and $\{\varphi(x;a_{i}) : i < \omega\}$ is inconsistent, then it is $k$-inconsistent.
\end{defn}

In \cite{buechler1999lascar}, the definition of lowness is given in terms of the finiteness of certain $D(p,\varphi)$ ranks, which we will not need here.  However, as observed in \cite{buechler1999lascar}, the above definition coincides with this definition in the case that $T$ is simple.  

\begin{lem} \label{weird witnessing}
Suppose $T$ is NSOP$_{1}$ with existence.  Assume we are given tuples $(a_{i})_{i \leq n}$ and $L(A)$-formulas $(\varphi_{i}(x;y_{i}))_{i \leq n}$ such that $a_{i} \ind^{K}_{A} a_{<i}$ for all $i \leq n$.  Then the following are equivalent:
\begin{enumerate}
\item The formula $\bigwedge_{i \leq n} \varphi_{i}(x;a_{i})$ does not Kim-divide over $A$.
\item For all $A$-indiscernible sequences $\langle \overline{a}_{j} : j < \omega \rangle = \langle (a_{j,0},\ldots, a_{j,n}) : j < \omega \rangle$ with $(a_{0,0},\ldots, a_{0,n}) = (a_{0},\ldots, a_{n})$ and $a_{j,i} \ind^{K}_{A} \overline{a}_{<j} a_{j,<i}$ for all $j < \omega$ and $i \leq n$, the following set of formulas does not Kim-divide over $A$:
$$
\left\{ \bigwedge_{i \leq n} \varphi_{i}(x;a_{j,i}) : j < \omega \right\}.
$$
\item There is an $A$-indiscernible sequence $\langle \overline{a}_{j} : j < \omega \rangle = \langle (a_{j,0},\ldots, a_{j,n}) : j < \omega \rangle$ with $(a_{0,0},\ldots, a_{0,n}) = (a_{0},\ldots, a_{n})$ and $a_{j,i} \ind^{K}_{A} \overline{a}_{<j} a_{j,<i}$ for all $j < \omega$, $i \leq n$ such that 
$$
\left\{ \bigwedge_{i \leq n} \varphi_{i}(x;a_{j,i}) : j < \omega \right\}
$$
is consistent.
\end{enumerate}
\end{lem}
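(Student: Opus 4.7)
I will prove the equivalence via the cycle $(3) \Rightarrow (1) \Rightarrow (2) \Rightarrow (3)$. A preliminary step---needed for $(2) \Rightarrow (3)$---is to construct a sequence $\langle \bar{a}_j : j < \omega \rangle$ of the stated form: given $\bar{a}_{<j}$, build $\bar{a}_j = (a_{j,0},\ldots,a_{j,n})$ coordinate by coordinate, at each level $i+1$ transferring $a_{i+1}$ via an $A$-automorphism sending $(a_0,\ldots,a_i)$ to $(a_{j,0},\ldots,a_{j,i})$ (yielding $a'$ with $a' \ind^K_A a_{j, \leq i}$ by invariance and the original hypothesis), and then applying extension (Fact~\ref{basic Kim-independence facts}(1)) followed by transitivity (Theorem~\ref{transitivity theorem}) to produce $a_{j,i+1}$ of the same type over $Aa_{j,\leq i}$ satisfying $a_{j,i+1} \ind^K_A \bar{a}_{<j}a_{j,\leq i}$. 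Extracting an $A$-indiscernible sequence via Erd\H{o}s-Rado preserves the coordinate-wise Kim-independence conditions by their type-definability (Lemma~\ref{type definability of kim-independence}). With existence settled, $(2) \Rightarrow (3)$ amounts to observing that a non-Kim-dividing partial type is consistent (by extension).

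For $(1) \Rightarrow (2)$, fix an arbitrary sequence $\langle \bar{a}_j\rangle$ as in (2); it suffices, by extension and symmetry, to produce a realization $d^*$ of the partial type with $d^* \ind^K_A \bigcup_j \bar{a}_j$. Starting from $d_0 \models \bigwedge_i \varphi_i(x;a_0)$ with $d_0 \ind^K_A \bar{a}_0$ (from (1) and extension), I iterate over the lex order on pairs $(j,i)$: at each stage, the current realization is amalgamated, via the strong independence theorem (Lemma~\ref{strongIT}), with a copy of $d_0$ transported by an $A$-automorphism sending $a_i$ to $a_{j,i}$. The coordinate-wise hypothesis $a_{j,i}\ind^K_A\bar{a}_{<j}a_{j,<i}$ together with symmetry supplies the Kim-independence of the accumulated parameter set from $a_{j,i}$ required by the theorem, and Lascar-strong-type issues are handled using $G$-compactness (Fact~\ref{basic Kim-independence facts}(6)) and strong extension (Fact~\ref{strong extension}). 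A compactness limit delivers $d^*$.

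The most involved direction is $(3) \Rightarrow (1)$. Given $\langle \bar{a}_j\rangle$ as in (3) and a realization $d$ of the partial type, extract an $Ad$-indiscernible version of the sequence (preserving both $A$-indiscernibility and the coordinate-wise Kim-independence via Lemma~\ref{type definability of kim-independence}), so that $d$ realizes $\bigwedge_i \varphi_i(x;a_{j,i})$ for every $j$. It suffices to show $d \ind^K_A \bar{a}_0$, since then, by an $A$-automorphism moving $\bar{a}_0$ back to $(a_0,\ldots,a_n)$, one obtains a realization of $\bigwedge_i \varphi_i(x;a_i)$ Kim-independent from $(a_0,\ldots,a_n)$ over $A$, proving non-Kim-dividing. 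If $\langle \bar{a}_j\rangle$ were known to be, at the super-tuple level, an $\ind^K$-Morley or weak tree Morley sequence over $A$, this would follow immediately from the witnessing theorem (Theorem~\ref{witnessing}) or Fact~\ref{slightly less basic Kim-independence facts}(2): any Kim-dividing formula $\chi(x;\bar{a}_0) \in \mathrm{tp}(d/A\bar{a}_0)$ would produce an inconsistent set $\{\chi(x;\bar{a}_j):j<\omega\}$ that $d$ nevertheless realizes, a contradiction. The main obstacle is therefore promoting the coordinate-wise assumption $a_{j,i}\ind^K_A\bar{a}_{<j}a_{j,<i}$ to the super-tuple statement $\bar{a}_j\ind^K_A\bar{a}_{<j}$; the naive induction on $i$ via transitivity stalls on a base-change step ($\bar{a}_{<j}\ind^K_{Aa_{j,\leq i}} a_{j,i+1}$) unavailable in NSOP$_1$ in general. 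I expect to resolve this by a more delicate induction combining transitivity with the strong independence theorem, or by directly constructing a weakly Morley tree whose $\zeta_\beta$-paths reproduce the sequence, thereby certifying it as weak tree Morley in the sense of Definition~\ref{weaklyspread}.
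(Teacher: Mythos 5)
Your directions $(1)\Rightarrow(2)$ and $(2)\Rightarrow(3)$ are essentially the paper's own argument (iterated independence theorem along the lexicographic order on pairs $(j,i)$, and an extension-plus-Ramsey construction of a sequence of the required form), and modulo the Lascar-strong-type bookkeeping you gesture at (which is most cleanly handled by noting that corresponding entries of an $A$-indiscernible sequence have the same Lascar strong type over $A$, so one may take the transporting automorphisms in $\mathrm{Autf}(\mathbb{M}/A)$) these are fine.

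The genuine gap is in $(3)\Rightarrow(1)$, which is the heart of the lemma. Your plan is to show $d \ind^{K}_{A} \overline{a}_{0}$ by promoting the coordinate-wise hypothesis $a_{j,i} \ind^{K}_{A} \overline{a}_{<j}a_{j,<i}$ to the super-tuple statement $\overline{a}_{j} \ind^{K}_{A} \overline{a}_{<j}$ (or by certifying $\langle \overline{a}_{j} : j < \omega\rangle$ as weak tree Morley over $A$), and you explicitly defer this step, noting the induction stalls on a base-change $\ind^{K}_{A} \to \ind^{K}_{Aa_{j,\leq i}}$. That step is not merely hard: the remark immediately following the lemma in the paper states that the independence conditions in (2) and (3) do \emph{not} imply that the super-tuple sequence is $\ind^{K}$-Morley, precisely because base monotonicity fails; indeed the whole point of the lemma is that consistency along such a sequence certifies non-Kim-dividing even though the sequence itself need not witness Kim-dividing at the level of the tuples $\overline{a}_{j}$. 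So the strategy of invoking Theorem \ref{witnessing} or Fact \ref{slightly less basic Kim-independence facts}(2) for the super-tuples cannot be completed. The paper's route avoids this entirely: for each fixed coordinate $i \leq n$, the column $\langle a_{j,i} : j < \omega \rangle$ \emph{is} an $\ind^{K}$-Morley sequence over $A$ (by monotonicity of the hypothesis and $A$-indiscernibility of the super-tuple sequence), and since $d$ realizes $\{\varphi_{i}(x;a_{j,i}) : j < \omega\}$, witnessing applied column-by-column shows each single formula $\varphi_{i}(x;a_{i})$ does not Kim-divide over $A$. One then builds, by induction on $k \leq n$, a realization $c_{k} \models \bigwedge_{i \leq k}\varphi_{i}(x;a_{i})$ with $c_{k} \ind^{K}_{A} a_{\leq k}$, amalgamating at each step via the independence theorem using the hypothesis $a_{k+1} \ind^{K}_{A} a_{\leq k}$; note this produces a \emph{new} independent realization of the conjunction rather than proving $d$ itself is independent. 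You should replace your deferred promotion step with this coordinate-wise use of witnessing followed by the independence-theorem induction.
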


\begin{proof}
(1)$\implies$(2)  Suppose we are given $\langle \overline{a}_{j} : j < \omega \rangle$ as in (2) and let $c \models \bigwedge_{i \leq n} \varphi_{i}(x;a_{i})$ with $c \ind^{K}_{A} \overline{a}_{0}$.  As $\langle \overline{a}_{j} : j < \omega \rangle$ is $A$-indiscernible, for each $j > 0$, there is some $\sigma_{j} \in \mathrm{Autf}(\mathbb{M}/A)$ with $\sigma_{j}(\overline{a}_{0}) = \overline{a}_{j}$.  Define $c_{0} = c$ and $c_{j} = \sigma_{j}(c)$ for all $j > 0$.  Then we have $c_{j} \equiv^{L}_{A} c_{0}$ and $c_{j} \models \bigwedge_{i \leq n} \varphi_{i}(x;a_{j,i})$ for all $j < \omega$.  By inductively applying the independence theorem (with respect to the lexicographic order on $\omega \times n$), we obtain $c_{*} \models \{\varphi_{i}(x;a_{j,i}) : i \leq n, j < \omega\}$ with $c_{*} \ind^{K}_{A} \overline{a}_{<\omega}$, which establishes (2).

(2)$\implies$(3)  It suffices to show that there is an $A$-indiscernible sequence $\langle \overline{a}_{j} : j < \omega \rangle = \langle (a_{j,0},\ldots, a_{j,n}) : j < \omega \rangle$ with $(a_{0,0},\ldots, a_{0,n}) = (a_{0},\ldots, a_{n})$ and such that $a_{j,i} \ind^{K}_{A} \overline{a}_{<j} a_{j,<i}$ for all $j < \omega$, $i \leq n$.  

First, we construct by induction a sequence $\langle (a'_{j,0},\ldots, a'_{j,n} : j < \omega \rangle = \langle \overline{a}'_{j} : j < \omega \rangle$ with $\overline{a}'_{j} \equiv_{A} (a_{0},\ldots, a_{n})$ and $a'_{j,i} \ind^{K}_{A}  \overline{a}'_{<j}a_{j,<i}$ for all $j < \omega$, $i \leq n$.  Given $\overline{a}'_{\leq k}$, we apply extension to find $a'_{k+1,0} \equiv_{A} a_{0}$ with $a'_{k+1,0} \ind^{K}_{A} \overline{a}'_{\leq k}$.  Given $\overline{a}'_{k+1,\leq i}$ for $i < n$, we find $b_{k+1,i+1}$ such that $a'_{k+1,\leq i}b_{k+1,i+1}  \equiv_{A} a_{\leq i}a_{i+1}$.  By invariance, this implies $b_{k+1,i+1} \ind^{K}_{A} a'_{k+1,\leq i}$.  Applying extension once more, we can find $a'_{k+1,i+1} \equiv_{Aa'_{k+1,\leq i}} b_{k+1,i+1}$ such that $a'_{k+1,i+1} \ind^{K}_{A} \overline{a}'_{\leq k} a'_{k+1,\leq i}$.  This completes the construction of $\langle \overline{a}'_{j} : j < \omega \rangle$.  By Ramsey, compactness, and an automorphism, we can extract an $A$-indiscernible sequence $\langle \overline{a}_{j} : j < \omega \rangle$ with $\overline{a}_{0} = (a_{0},\ldots, a_{n})$ as desired.  

(3)$\implies$(1)  Let $c \models \left\{ \bigwedge_{i \leq n} \varphi_{i}(x;a_{j,i}) : j < \omega \right\}$.  Note that, for each $i \leq n$, the sequence $\langle a_{j,i} : j < \omega \rangle$ is an $\ind^{K}$-Morley sequence over $A$.  Hence, by witnessing, Theorem \ref{witnessing}, and the fact that $c \models \{\varphi_{i}(x;a_{j,i}) : j < \omega \}$, we see that $c \ind^{K}_{A} a_{i}$ for each $i \leq n$.  

By induction on $k \leq n$, we will choose $c_{k} \equiv^{L}_{A} c$ such that $c_{k} \models \{\varphi_{i}(x;a_{i}) : i \leq k\}$ and $c_{k} \ind^{K}_{A} a_{\leq k}$.  To begin we set $c_{0} = c$.  Given $c_{k}$ for some $k < n$, we apply the independence theorem to find $c_{k+1}$ with $c_{k+1} \equiv_{Aa_{\leq k}}^{L} c_{k}$,  $c_{k+1} \equiv^{L}_{Aa_{k+1}} c$, and $c_{k+1} \ind^{K}_{A} a_{\leq k+1}$.  After $n$ steps, we obtain $c_{n} \models \bigwedge_{i \leq n} \varphi_{i}(x;a_{i})$ with $c_{n} \ind^{K}_{A} a_{\leq n}$, which shows (1).  
\end{proof}

\begin{rem}
The independence conditions of (2) and (3) do not imply that the sequence $\langle (a_{i,0},\ldots, a_{i,n}) : i < \omega \rangle$ is $\ind^{K}$-Morley, due to the lack of base monotonicity.  Consequently, this lemma strengthens witnessing, Theorem \ref{witnessing}, for formulas of a certain form, showing that they Kim-divide along sequences that are themselves not necessarily $\ind^{K}$-Morley sequences.  
\end{rem}

\begin{cor} \label{type definable}
Suppose $T$ is NSOP$_{1}$ with existence.  Assume that for each $i \leq n$, we are given a complete type $p(y_{i}) \in S(A)$ and an $L(A)$-formula $\varphi_{i}(x;y_{i})$.
\begin{enumerate}
\item There is a partial type $R(y_{0},\ldots, y_{n})$ over $A$ containing $\bigcup_{i \leq n} p_{i}(y_{i})$ such that $a'_{0},\ldots, a'_{n} \models R(y_{0},\ldots, y_{n})$ if and only if $a'_{i} \ind^{K}_{A} a'_{<i}$ for all $i \leq n$ and $\bigwedge_{i \leq n} \varphi_{i}(x;a'_{i})$ does not Kim-divide over $A$. 
\item If, additionally, $T$ is low, then there is a formula $\gamma(y_{0},\ldots, y_{n})$ over $A$ such that, if $(a'_{0},\ldots, a'_{n}) \models p(y_{0}) \cup \ldots \cup p(y_{n})$ and $a'_{i} \ind^{K}_{A} a'_{<i}$ for all $i\leq n$, then $\mathbb{M} \models \gamma(a'_{0},\ldots, a'_{n})$ if and only if $\bigwedge_{i \leq n} \varphi_{i}(x;a'_{i})$ does not Kim-divide over $A$.  
\end{enumerate}
\end{cor}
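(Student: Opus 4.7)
The plan is to prove (1) by using Lemma \ref{weird witnessing}(1)$\Leftrightarrow$(3) to recast non-Kim-dividing as a type-definable consistency statement, and then to prove (2) by using lowness to upgrade this type-definable condition to one cut out by a single formula via a clopen argument in the Stone space $S_{\overline{y}}(A)$.

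For (1), I would set $\Phi(x;\overline{y}) := \bigwedge_{i\leq n}\varphi_i(x;y_i)$ and consider the partial type $\Pi$ in the variables $\overline{y},\overline{y}_1,\overline{y}_2,\ldots,x$ over $A$, with $\overline{y}_0 := \overline{y}$, expressing: (i) $y_{j,i}\models p$ for every $j<\omega$ and $i\leq n$; (ii) $(\overline{y}_j)_{j<\omega}$ is $A$-indiscernible; (iii) for every $j<\omega$ and $i\leq n$, $y_{j,i}\ind^{K}_A \overline{y}_{<j}y_{j,<i}$ (type-definable by Lemma \ref{type definability of kim-independence}); and (iv) $\bigwedge_i\varphi_i(x;y_{j,i})$ for every $j<\omega$. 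Let $R(\overline{y})$ be the projection of $\Pi$ onto the $\overline{y}$-coordinate, which by saturation of the monster is itself a partial type over $A$. The forward direction of the equivalence follows from Lemma \ref{weird witnessing}(1)$\Rightarrow$(3) together with realizing the resulting consistent set of conjunctions by some $x$; the reverse direction follows from Lemma \ref{weird witnessing}(3)$\Rightarrow$(1), observing that consistency of $\Pi(\overline{a},\cdot,\cdot)$ already forces the Kim-independence conditions on $\overline{a}$ through the row $j=0$.

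For (2), I would apply lowness to the formula $\Phi(x;\overline{y})$ to obtain $K<\omega$ such that, for any $A$-indiscernible sequence $\langle\overline{a}_j\rangle$, if $\{\Phi(x;\overline{a}_j)\}$ is inconsistent then it is $K$-inconsistent. Let $M_K(\overline{y}_0,\ldots,\overline{y}_{K-1})$ be the partial type over $A$ asserting that $(\overline{y}_j)_{j<K}$ is an initial segment of an $A$-indiscernible sequence satisfying the Kim-independence conditions of Lemma \ref{weird witnessing}(2), with each $y_{j,i}\models p$. Combining lowness with Kim's lemma (Fact \ref{basic Kim-independence facts}(3)) and Lemma \ref{weird witnessing}, on the set $X$ of tuples $\overline{a}'$ satisfying $a'_i\models p$ and $a'_i\ind^{K}_A a'_{<i}$ for all $i$, Kim-dividing of $\Phi(x;\overline{a}')$ over $A$ is equivalent both to the existential statement $\exists\,\overline{y}_1,\ldots,\overline{y}_{K-1}\,\bigl[M_K(\overline{a}',\overline{y}_1,\ldots,\overline{y}_{K-1})\wedge\neg\exists x\,\bigwedge_{j<K}\Phi(x;\overline{y}_j)\bigr]$ and to the corresponding universal statement $\forall\,\overline{y}_1,\ldots,\overline{y}_{K-1}\,\bigl[M_K(\overline{a}',\overline{y}_1,\ldots,\overline{y}_{K-1})\to\neg\exists x\,\bigwedge_{j<K}\Phi(x;\overline{y}_j)\bigr]$. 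The existential formulation defines a projection of a type-definable set, hence a type-definable (and therefore relatively closed) subset of $S_{\overline{y}}(A)$; the universal formulation, via the compactness reduction of inconsistency of $M_K\cup\{\exists x\,\bigwedge\Phi\}$ to inconsistency of a finite sub-partial-type, is equivalent to a disjunction of formulas and so defines an open set. Their restrictions to $X$ agree and equal the set of Kim-dividing types in $X$, which is therefore relatively clopen in $X$ and hence cut out by a single formula $\delta(\overline{y})$; taking $\gamma(\overline{y}):=\neg\delta(\overline{y})$ will give the desired defining formula.

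The main technical obstacle will be the compactness manoeuvre for the universal formulation, and more subtly the coordination of the two formulations on $X$: one must use Lemma \ref{weird witnessing} and Kim's lemma carefully to ensure that on the type-definable set $X$ both the $\exists$- and $\forall$-versions genuinely coincide with Kim-dividing, so that the closed and open characterizations can be combined to produce a single defining formula. Once this clopen characterization is in place, the rest of the argument is essentially bookkeeping.
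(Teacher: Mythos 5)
Your proposal is correct and follows essentially the same route as the paper: part (1) is the paper's argument (type-define the witnessing array via Lemma \ref{type definability of kim-independence}, project onto the $\overline{y}$-coordinates by compactness, and apply Lemma \ref{weird witnessing}), and part (2) invokes lowness at exactly the same point. The only cosmetic difference in (2) is that you truncate the configuration to $K$ rows and phrase the final step as a clopen separation inside $S_{\overline{y}}(A)$, whereas the paper keeps the full infinite array with $k$-inconsistency (the type $R_1$/$R'$) and pairs it against the type $R$ from part (1), extracting $\gamma$ by the same compactness separation of two type-definable sets covering the independence locus.
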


\begin{proof}
By Lemma \ref{type definability of kim-independence}(1), there is a partial type $\Lambda(z_{i,j} : i < \omega, j \leq n)$ over $A$ which expresses the following:
\begin{enumerate}[(a)]
\item $z_{i,j} \models p_{j}$ for all $i < \omega$ and $j \leq n$.
\item $z_{i,j} \ind^{K}_{A} \overline{z}_{<i}z_{i,<j}$ for all $i < \omega$ and $j \leq n$, where $\overline{z}_{i} = (z_{i,0},\ldots, z_{i,n})$.
\item The sequence $\langle \overline{z}_{i} : i < \omega \rangle$ is $A$-indiscernible.
\end{enumerate}
Let $\lambda'(y_{0},\ldots, y_{n},\overline{z}_{i} : i < \omega)$ be the partial type given by $\{y_{j}  = z_{0,j} : j \leq n\} \cup \Lambda(\overline{z}_{i} : i < \omega)$. 

To show (1), consider the partial type $R_{0}(y_{0},\ldots, y_{n},\overline{z}_{i} : i < \omega)$ which extends $\lambda'$ and expresses additionally that $\{\bigwedge_{i \leq n} \varphi_{i}(x;z_{i,j}) : i < \omega\}$ is consistent.  Then $R$ may be defined by 
$$
R(y_{0},\ldots, y_{n}) \equiv (\exists \overline{z}_{i} : i < \omega) \bigwedge R_{0}(y_{0},\ldots, y_{n},\overline{z}_{i} ; i < \omega).
$$
This $R$ is clearly type definable and, by Lemma \ref{weird witnessing}(3), $R$ has the desired properties.

For (2), we know, by the lowness of $T$, that there is some $k < \omega$ such that, if, given any $\langle (a'_{i,j})_{j \leq n} : i < \omega \rangle$ is an $A$-indiscernible sequence and 
$$
\left\{ \bigwedge_{j \leq n} \varphi_{i}(x;a_{i,j}) : i < \omega\right\}
$$
is inconsistent, then this set of formulas is $k$-inconsistent.  Consider the partial type $R_{1}(y_{0},\ldots, y_{n},\overline{z}_{i} : i < \omega)$ which extends $\lambda'$ and expresses additionally that $\{\bigwedge_{j \leq n} \varphi_{i}(x;z_{i,j}) : i < \omega\}$ is $k$-inconsistent.  Then we will define $R'$ by 
$$
R'(y_{0},\ldots, y_{n}) \equiv (\exists \overline{z}_{i} : i < \omega) \bigwedge R_{1}(y_{0},\ldots, y_{n},\overline{z}_{i} ; i < \omega).
$$
It follows that if $(a'_{j})_{j \leq n} \models \bigcup_{j \leq n} p_{j}(y_{j})$ and $a'_{j} \ind^{K}_{A} a'_{<j}$, then $(a'_{j})_{j \leq n} \models R'$ if and only if $\bigwedge_{j \leq n} \varphi_{j}(x;a'_{j})$ Kim-divides over $A$, by Lemma \ref{weird witnessing}(2).  We showed in (1) that the complement of $R'$ is type-definable (by $R$), and therefore, by compactness, we obtain the desired $\gamma$.  
\end{proof}

\begin{thm}
If $T$ is a low NSOP$_{1}$ theory with existence, then Lascar strong types are strong types.  That is, for any (possibly infinite) tuples $a,b$ and small set of parameters $A$, if $a \equiv^{S}_{A} b$ then $a \equiv^{L}_{A} b$.  
\end{thm}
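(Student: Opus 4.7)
The plan is to adapt Buechler's argument for low simple theories \cite{buechler1999lascar} to the NSOP$_1$-with-existence setting, with Corollary \ref{type definable}(2) replacing the role of finiteness of $D$-ranks in Buechler's proof. After the standard reduction from infinite to finite tuples (using that Lascar and Shelah strong type equivalence of infinite tuples are both determined by all corresponding finite subtuples), I assume $a, b$ are finite with $a \equiv^S_A b$, and set $p := \mathrm{tp}(a/A) = \mathrm{tp}(b/A)$.

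I introduce an auxiliary relation $E$ on $p(\mathbb{M})^2$ by declaring $E(a', b')$ to hold iff there exists $c \models p$ with $c \ind^K_A a'$, $c \ind^K_A b'$, and $a'c \equiv_A b'c$. Reflexivity of $E$ follows from existence, symmetry is immediate, and transitivity is obtained by amalgamating two witnesses via the strong independence theorem, Lemma \ref{strongIT}. The inclusion $E \subseteq \equiv^L_A$ on $p(\mathbb{M})$ follows from a combination of Fact \ref{slightly less basic Kim-independence facts}(3) and $G$-compactness of $T_A$ (Fact \ref{basic Kim-independence facts}(6)): the witness $c$ allows one, via extension and amalgamation, to build an $A$-indiscernible configuration showing that $a'$ and $b'$ lie in the same Lascar class over $A$. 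In particular, $E$ has boundedly many classes on $p(\mathbb{M})$.

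The main obstacle is to upgrade $E$ from its a priori $\bigvee$-type-definable form to a genuinely $A$-definable relation. By Lemma \ref{type definability of kim-independence}, $\ind^K_A$ is type-definable over $A$, so $E$ is presented as an existential projection of a type-definable condition on $(a',b',c)$. Here the lowness hypothesis enters crucially via Corollary \ref{type definable}(2): for each finite $\Delta \subseteq L(A)$, I consider the relaxation $E_\Delta(a', b')$ asserting the existence of $c \models p$ realizing the $\Delta$-fragment of $\mathrm{tp}(a'c/A) = \mathrm{tp}(b'c/A)$ while being Kim-independent from $a'$ and $b'$; applying Corollary \ref{type definable}(2) to the resulting conjunction of formulas converts ``Kim-consistency'' into a single $L(A)$-formula on $(a',b')$, making each $E_\Delta$ $A$-definable. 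Since $E = \bigcap_\Delta E_\Delta$ has boundedly (hence finitely) many classes, a compactness argument on the descending family $(E_\Delta)_\Delta$ forces $E_\Delta = E$ for some finite $\Delta$, yielding $A$-definability of $E$.

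Finally, any $A$-definable equivalence relation with boundedly many classes has only finitely many classes by compactness, and by definition $\equiv^S_A$ is contained in every such relation; hence $\equiv^S_A \subseteq E \subseteq \equiv^L_A$ on $p(\mathbb{M})$, giving $a \equiv^L_A b$ and completing the proof.
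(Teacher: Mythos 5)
There are genuine gaps, and they sit exactly at the points your auxiliary relation $E$ is supposed to do the work that the paper's relations $R_{\varphi}$, $S_{\varphi}$, $E_{\varphi}$ do. First, the two structural claims about $E$ are not justified. For transitivity you cannot amalgamate the two witnesses $c$ (for $a',b'$) and $c''$ (for $b',d'$) by Lemma \ref{strongIT} or Fact \ref{basic Kim-independence facts}(5): you do not know $c \equiv^{L}_{A} c''$ (only that both realize $p$), you only have $c \ind^{K}_{A} a'$ and $c \ind^{K}_{A} b'$ separately rather than $c \ind^{K}_{A} a'b'$, and the two ``sides'' $a'b'$ and $b'd'$ overlap in $b'$, so the hypothesis $b \ind^{K}_{A} c$ of the independence theorem is unavailable (and base monotonicity cannot be used to pass to the base $Ab'$). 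Second, the inclusion $E \subseteq\, \equiv^{L}_{A}$ is asserted but never proved: $a' \equiv_{Ac} b'$ together with $c \ind^{K}_{A} a'$, $c \ind^{K}_{A} b'$ does not obviously yield Lascar equivalence over $A$ (same type over $Ac$ is far weaker than same type over a model containing $A$), and Fact \ref{slightly less basic Kim-independence facts}(3) goes in the opposite direction (it uses Lascar equivalence as a hypothesis). Moreover, even granting $E \subseteq\, \equiv^{L}_{A}$, the statement ``in particular $E$ has boundedly many classes'' is a non sequitur: a refinement of a bounded relation need not be bounded. Boundedness would require the reverse inclusion $\equiv^{L}_{A} \subseteq E$, whose natural proof (transport a witness by a Lascar strong automorphism and amalgamate) again needs $a' \ind^{K}_{A} b'$, which you do not have. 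The paper avoids all of this by never working with an ad hoc equivalence relation: it ties everything to the finest bounded type-definable relation $r$ defining $\equiv^{L}$ (available by $G$-compactness), uses witnesses realizing $\varphi(x;u)\wedge\varphi(x;v')$ for $\varphi \in r$, and recovers Lascar equivalence from the ``triangle'' property of $r$ in Claim 2.

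The definability step is also not correct as written. Corollary \ref{type definable}(2) only supplies a formula $\gamma$ that is equivalent to non-Kim-dividing \emph{on the locus} of tuples realizing the fixed types and satisfying the type-definable independence conditions $a'_{i} \ind^{K}_{A} a'_{<i}$; it does not remove those conditions. In your $E_{\Delta}$ the existential quantifier over the witness $c$ still ranges over the type-definable set ``$c \models p$, $c \ind^{K}_{A} a'$, $c \ind^{K}_{A} b'$,'' so each $E_{\Delta}$ (and $E$ itself) is at best type-definable, not definable, and the compactness argument forcing $E_{\Delta} = E$ for some finite $\Delta$ has nothing to bite on (it also presupposes finitely many classes, which is the point at issue). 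The paper obtains definability differently: it shows that both $R_{\varphi}$ and $S_{\varphi}$ are type-definable (lowness is used precisely to make $S_{\varphi}$, the ``Kim-dividing'' side, type-definable via Corollary \ref{type definable}(2)), proves they are complementary on $p \times p$ (Claim 1, using Fact \ref{strong extension} and the independence theorem), and only then extracts a formula $\sigma_{\varphi}$ by compactness; a further compactness step with the formula $\delta$ and the $\forall z$ construction is still needed to turn $\sigma_{\varphi}$ into a definable finite equivalence relation $E_{\varphi}$. Your proposal has no analogue of the $R_{\varphi}$ versus $S_{\varphi}$ dichotomy, which is the mechanism by which lowness actually produces definability, so the argument does not go through in its current form.
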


\begin{proof}
Let $A$ be any small set of parameters.  As $T_{A}$ is $G$-compact by Fact \ref{basic Kim-independence facts}(7) and, trivially, $T_{A}$ is low, it suffices to prove the theorem when $a$ and $b$ realize a type $p(x)$ over $\emptyset$, where $x$ is a finite tuple of variables. Let $r(x,y)$ be a partial type, closed under conjunctions, expressing that $x \equiv^{L} y$, i.e.~$r(x,y)$ defines the finest type-definable equivalence relation over $\emptyset$ with boundedly many classes.  Fix $\varphi(x;y) \in r(x,y)$.  Note that, for any $a \models p$, $\varphi(x;a)$ does not divide over $\emptyset$, because if $\langle a_{i} : i < \omega \rangle$ is an indiscernible sequence with $a_{0} = a$, then $a_{0} \equiv^{L} a_{i}$ for all $i< \omega$, hence $a_{0} \models \bigcup_{i < \omega} r(x;a_{i})$.  In particular, $\varphi(x;a)$ does not Kim-divide over $\emptyset$.  

Define a relation $R_{\varphi}(u,v)$ expressing the following:
\begin{enumerate}
\item $u,v \models p$.
\item There exists $v'$ satisfying: 
\begin{enumerate}
\item $v' \equiv^{L} v$.
\item $u \ind^{K} v'$.
\item $\varphi(x;u) \wedge \varphi(x;v')$ does not Kim-divide over $\emptyset$.
\end{enumerate}
\end{enumerate}
Clearly if $v \models p$ and $v \equiv^{L}v'$, then $v' \models p$.  By Lemma \ref{type definability of kim-independence} and Corollary \ref{type definable}(1), there is a partial type $\Gamma(z,w)$ over $\emptyset$ such that $(v',u) \models \Gamma(z,w)$ if and only if $v',u \models p$,  $u \ind^{K} v'$, and $\varphi(x;v') \wedge \varphi(x;u)$ does not Kim-divide over $\emptyset$.  It follows that $R_{\varphi}(u,v)$ if and only if $(\exists v')\left[\bigwedge r(v,v') \wedge \Gamma(v',u)\right]$, which shows $R_{\varphi}(u,v)$ is type-definable.

In a similar fashion, we define a relation $S_{\varphi}(u,v)$ to hold when the following conditions are satisfied:
\begin{enumerate}
\item $u,v \models p$.
\item There is $v'$ satisfying the following: 
\begin{enumerate}
\item $v' \equiv^{L} v$.
\item $u \ind^{K} v'$.
\item $\varphi(x;u) \wedge \varphi(x;v)$ Kim-divides over $\emptyset$.
\end{enumerate}
\end{enumerate}
The type-definability of $S_{\varphi}$ follows from an identical argument, using Corollary \ref{type definable}(2) in the place of Corollary \ref{type definable}(1) (this is where we make use of our hypothesis that $T$ is low).  

From here, our proof follows the argument of \cite{buechler1999lascar}, as presented in \cite[Section 5.2]{kim2013simplicity}.  First, we show the following:

\textbf{Claim 1}:  If $u,v \models p$, then $R_{\varphi}(u,v)$ if and only if $\neg S_{\varphi}(u,v)$.  

\emph{Proof of Claim 1}:  First, it is clear that it is impossible for both $\neg R_{\varphi}(u,v)$ and $\neg S_{\varphi}(u,v)$ to hold since, by extension for Lascar strong types (Fact \ref{strong extension}) there is $v' \equiv^{L} v$ with $u \ind^{K} v'$ and it must be the case that either $\varphi(x,u) \wedge \varphi(x;v')$ Kim-divides or $\varphi(x;u) \wedge \varphi(x;v')$ does not Kim-divide over $\emptyset$.  

Secondly, suppose $R_{\varphi}(u,v)$ holds witnessed by $v'$ and $S_{\varphi}(u,v)$ holds witnessed by $v''$.  Then we have $v \equiv^{L} v' \equiv^{L} v''$, $u \ind^{K} v'$, $u \ind^{K} v''$, $\varphi(x;u) \wedge \varphi(x;v')$ does not Kim-divide over $\emptyset$, and $\varphi(x;u) \wedge \varphi(x;v'')$ Kim-divides over $\emptyset$.  Choose $c'$ realizing $\varphi(x;u) \wedge \varphi(x;v')$ with $c' \ind^{K} u,v'$, and pick $c''$ so that $c'v' \equiv^{L} c''v''$.  Then, in particular, we have $c' \equiv^{L} c''$, $c' \ind^{K} u$, $c'' \ind^{K} v''$, and $u \ind^{K} v''$ so, by the independence theorem, there is $c$ with $c \models \mathrm{Lstp}(c'/u) \cup \mathrm{Lstp}(c/v'')$ and $c \ind^{K} u,v''$.  Since $c$ realizes $\varphi(x;u) \wedge \varphi(x;v'')$, we obtain a contradiction.  Together with the first part, this establishes the claim. \qed

By the claim and compactness, there is a formula $\sigma(x,y) = \sigma_{\varphi}(x,y)$ such that $p(x) \cup p(y) \vdash R_{\varphi}(x,y)$ if and only if  $\sigma(x,y)$. Moreover, it is clear from the definitions that $R_{\varphi}(x,y) \wedge r(y,y')$ implies $R_{\varphi}(x,y')$.  Again by compactness, there is a formula $\delta(x) \in p(x)$ such that 
\begin{itemize}
\item[(*)] $\delta(x) \wedge \delta(y) \wedge \sigma(x,y) \wedge \bigwedge r(x,z) \models \sigma(x,z).$
\end{itemize}
It follows from $(*)$ and the symmetry of $r$ that, for any $z \models \delta$, we have $\models \sigma(z,x) \leftrightarrow \sigma(z,y)$, for all $x,y \models \delta$.  Therefore, we obtain a definable equivalence relation $E_{\varphi}(x,y)$ as follows:
$$
E_{\varphi}(x,y) \equiv \left[ \neg \delta(x) \wedge \neg \delta(y) \right] \vee \left[ \delta(x) \wedge \delta(y) \wedge (\forall z) \left(\delta(z) \to (\sigma(z,x) \leftrightarrow \sigma(z,y) \right)\right].
$$
To conclude, we establish the following:

\textbf{Claim 2}: The partial type $p(x) \cup p(y)$ implies $r(x,y)$ holds if and only if $\bigwedge_{\varphi \in r} E_{\varphi}(x,y)$ holds.   

\emph{Proof of Claim}:  First, we will show $r(x,y)$ entails $E_{\varphi}(x,y)$ for all $\varphi \in r$.  Clearly $r(x,y)$ implies $\delta(x) \leftrightarrow \delta(y)$.  Moreover, as noted above, if $r(x,y)$ and $\delta(x) \wedge \delta(y)$ both hold, then for any $z \models \delta$ we have $\sigma(z,x) \leftrightarrow \sigma(z,y)$ by (*) above and the symmetry of $r(x,y)$.  Hence $E_{\varphi}(x,y)$ holds.  

Secondly, we will show that if $(a,b)\models p(x) \cup p(y)$ does not realize $r(x,y)$, then $\neg \bigwedge_{\varphi \in r} E_{\varphi}(a,b)$.  Choose $\psi(x,y) \in r(x,y)$ such that $\neg \psi(a,b)$.  Because $r$ is an equivalence relation, there is some $\varphi(x,y) \in r(x,y)$ such that 
$$
(\exists x,x') \left( \varphi(x,y) \wedge \varphi(x,x') \wedge \varphi(x',z) \right) \vdash \psi(y,z).
$$
Then if $E_{\varphi}(a,b)$ holds, then, because $R_{\varphi}(a,a)$ holds, we have $\sigma_{\varphi}(a,a)$ and $\sigma_{\varphi}(a,b)$ and therefore $R_{\varphi}(a,b)$.  By the definition of $R_{\varphi}(a,b)$, this implies there is some $c \equiv^{L} b$ such that $\varphi(x,a) \wedge \varphi(x,c)$ is consistent, and therefore $\models \psi(a,b)$, a contradiction.  This concludes the proof of the claim, and hence the theorem.   
\end{proof}

\section{Rank} \label{rank section}

In this section, we introduce a family of ranks, suitable for the study of NSOP$_{1}$ theories.  This makes critical use of witnessing over arbitrary sets and provides a clear context in which working over arbitrary sets greatly simplifies the situation.  Our definition is close to the definition of $D$-rank familiar from simple theories, but we are required to add a new parameter in the rank, which keeps track of the type of the parameters that appear in instances of Kim-dividing.  

\begin{defn} \label{rank def}
Suppose $q(y) \in S(B)$, $\Delta(x;y)$ is a finite set of $L(B)$-formulas, and $k < \omega$.  Then for any set of formulas $\pi(x)$ over $\mathbb{M}$, we define $D_{1}(\pi,\Delta,k,q) \geq 0$ if $\pi$ is consistent, and $D_{1}(\pi,\Delta,k,q) \geq n+1$ if there is a sequence $I = \langle c_{i} : i < \omega \rangle$ such that the following conditions hold:
\begin{enumerate}
\item The sequence $I$ is an $\ind^{K}$-Morley sequence over $B$ with $c_{i} \models q$.
\item The sequence $I$ is indiscernible over $\mathrm{dom}(\pi)B$ (and hence $\mathrm{dom}(\pi) \ind^{K}_{B} c_{i}$ for all $i$).
\item We have $\{\varphi(x;c_{i}) : i < \omega\}$ is $k$-inconsistent for some formula $\varphi(x;y) \in \Delta$.
\item We have $D_{1}(\pi \cup \{\varphi(x;c_{i})\},\Delta,k,q) \geq n$ for all $i < \omega$.  
\end{enumerate}
%\item We say $D_{2}(\pi,\Delta,k,q) \geq 0$ if $\pi$ is consistent and $D_{1}(\pi,\Delta,k,q) \geq n+1$ if there is a sequence $I = \langle c_{i} : i < \omega \rangle$ and an extension $q' = q'(y;c_{0}) \in S(Bc_{0})$ of $q$ which does not Kim-divide over $B$, such that the following conditions hold:
%\begin{enumerate}
%\item The sequence $I$ is an $\ind^{K}$-Morley sequence over $B$ with $c_{i} \models q$.
%\item The sequence $I$ is indiscernible over $\mathrm{dom}(\pi)B$ (and hence $\mathrm{dom}(\pi) \ind^{K}_{B} c_{i}$ for all $i$).
%\item We have $\{\varphi(x;c_{i}) : i < \omega\}$ is $k$-inconsistent for some formula $\varphi(x;y) \in \Delta$.
%\item We have $D_{2}(\pi \cup \{\varphi(x;c_{i})\},\Delta,k,q'(y;c_{i})) \geq n$ for all $i < \omega$.  
%\end{enumerate}
%\end{enumerate}
We define $D_{1}(\pi,\Delta,k,q) = n$ if $n$ is least such that $D_{1}(\pi,\Delta,k,q) \geq n$ and also $D_{1}(\pi,\Delta,k,q) \not\geq n+1$.  We say $D_{1}(\pi,\Delta, k,q) = \infty$ if $D_{1}(\pi,\Delta,k,q) \geq n$ for all $n < \omega$.  
\end{defn}

\begin{lem} \label{basic inequalities}
Suppose $q(y) \in S(B)$, $\Delta(x;y)$ is a finite set of $L(B)$-formulas, and $k <\omega$.  Then we have the following.  
\begin{enumerate}
\item For all $\sigma \in \mathrm{Aut}(\mathbb{M}/B)$, 
$$
D_{1}(\sigma(\pi),\Delta, k, q) = D_{1}(\pi,\Delta,k,q). 
$$
\item  If $\pi$ and $\pi'$ are partial types over $\mathbb{M}$ such that $\pi(x) \vdash \pi'(x)$, then
$$
D_{1}(\pi,\Delta,k,q) \leq D_{1}(\pi',\Delta,k,q).
$$
\item If $n \geq m$, then $D_{1}(\pi, \Delta,k,q) \geq n$ implies $D_{1}(\pi,\Delta,k,q) \geq m$.  
\item If $\psi_{0}(x),\ldots, \psi_{m-1}(x)$ are formulas over $\mathbb{M}$, then
$$
D_{1}\left(\pi \cup \left\{ \bigvee_{j < m} \psi_{i}(x)\right\},\Delta,k,q\right) = \max_{j < m} D_{1}(\pi \cup \{\psi_{j}(x)\},\Delta,k,q).
$$
\end{enumerate}
\end{lem}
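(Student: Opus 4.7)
The plan is to prove each part by induction on the rank $n$ appearing in the definition of $D_{1}(-,\Delta,k,q) \geq n$. Parts (1) and (2) are essentially bookkeeping, and (3) combines them with a pigeonhole argument made uniform by indiscernibility.

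For (1), I would induct on $n$, with the base case being immediate. For the inductive step, if $I = \langle c_{i} : i < \omega \rangle$ witnesses $D_{1}(\pi,\Delta,k,q) \geq n+1$ with formula $\varphi \in \Delta$, then because $\sigma \in \mathrm{Aut}(\mathbb{M}/B)$ fixes $B$ it fixes $q$ and every formula in $\Delta$, and preserves Kim-independence over $B$; hence $\sigma(I)$ is still $\ind^{K}$-Morley over $B$ in $q$, indiscernible over $\mathrm{dom}(\sigma(\pi))B$, and $\{\varphi(x;\sigma(c_{i})) : i < \omega\}$ remains $k$-inconsistent. The inductive hypothesis applied to $\pi \cup \{\varphi(x;c_{i})\}$ supplies clause (4), and applying the same argument to $\sigma^{-1}$ gives equality.

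For (2), again inducting on $n$, suppose $I$ witnesses $D_{1}(\pi,\Delta,k,q) \geq n+1$ with formula $\varphi$. Using a standard Ehrenfeucht--Mostowski extraction I would produce a sequence $I^{*} = \langle c_{i}^{*}\rangle$ indiscernible over $(\mathrm{dom}(\pi) \cup \mathrm{dom}(\pi'))B$ and locally based on $I$ over this set. By Lemma \ref{type definability of kim-independence}, the property of being an $\ind^{K}$-Morley sequence over $B$ in $q$ is type-definable over $B$, so $I^{*}$ inherits it; $k$-inconsistency is preserved as a first-order property of finite subtuples; and since each $c_{i}^{*}$ realizes the type of $c_{0}$ over $\mathrm{dom}(\pi)B$, part (1) yields $D_{1}(\pi \cup \{\varphi(x;c_{i}^{*})\},\Delta,k,q) \geq n$. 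The inductive hypothesis applied to $\pi \cup \{\varphi(x;c_{i}^{*})\} \vdash \pi' \cup \{\varphi(x;c_{i}^{*})\}$ then verifies clause (4) for $\pi'$, so $I^{*}$ witnesses $D_{1}(\pi',\Delta,k,q) \geq n+1$.

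For (3), the inequality $\max_{j} D_{1}(\pi \cup \{\psi_{j}\}) \leq D_{1}(\pi \cup \{\bigvee_{j}\psi_{j}\})$ follows directly from (2). For the reverse, I induct on $n$: if $I = \langle c_{i}\rangle$ witnesses $D_{1}(\pi \cup \{\bigvee_{j}\psi_{j}\}) \geq n+1$ with formula $\varphi$, the inductive hypothesis applied with $\pi$ replaced by $\pi \cup \{\varphi(x;c_{i})\}$ supplies, for each $i$, some $j_{i} < m$ with $D_{1}(\pi \cup \{\varphi(x;c_{i})\} \cup \{\psi_{j_{i}}\}) \geq n$. The key point is that $I$ is indiscernible over $\mathrm{dom}(\pi \cup \{\bigvee_{j}\psi_{j}\})B$, which contains the parameters of every $\psi_{j}$, so for any $i, i'$ there is $\sigma \in \mathrm{Aut}(\mathbb{M}/B)$ fixing this set and mapping $c_{i}$ to $c_{i'}$; by (1) the admissible indices at $c_{i}$ and $c_{i'}$ coincide, so a single $j^{*}$ works for every $i$, and $I$ itself witnesses $D_{1}(\pi \cup \{\psi_{j^{*}}\}) \geq n+1$. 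The main obstacle is precisely this uniformity step: without (1) combined with indiscernibility over the full parameter set of $\pi \cup \{\bigvee_{j}\psi_{j}\}$, the pigeonhole would only yield an index uniform along a subsequence, requiring additional extraction to preserve the $\ind^{K}$-Morley property.
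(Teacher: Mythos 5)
Your proof is correct and follows essentially the same route as the paper: induction on the rank, with (2) proved by extracting a sequence indiscernible over both domains that is locally based on the original witness and then invoking (1), and (3) deduced from (2) together with the same induction. The only cosmetic difference is at the uniformization step in (3), where you use indiscernibility over the parameters of the $\psi_{j}$ together with (1) to get a single index $j^{*}$ working for every $i$, while the paper simply applies the pigeonhole principle (implicitly passing to a subsequence, which remains a valid witness); both are fine.
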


\begin{proof}
(1) is clear.    

(2) By induction on $n$, we will show 
$$
D_{1}(\pi,\Delta,k,q) \geq n \implies D_{1}(\pi',\Delta,k,q) \geq n.
$$
For $n = 0$ there is nothing to show.  Suppose the statement holds for $n$, and assume $D_{1}(\pi,\Delta,k,q) \geq n+1$.  Then there is a $\mathrm{dom}(\pi)B$-indiscernible sequence $I = \langle c_{i} : i < \omega \rangle$ which is additionally an $\ind^{K}$-Morley sequence over $B$ in $q$ and $\varphi(x;y) \in \Delta$ such that $\{\varphi(x;c_{i}) : i < \omega\}$ is $k$-inconsistent and $D_{1}(\pi \cup \{\varphi(x;c_{i})\},\Delta,k,q) \geq n$.  Let $I' = \langle c'_{i} : i < \omega \rangle$ be a $B\mathrm{dom}(\pi)\mathrm{dom}(\pi')$-indiscernible sequence locally based on $I$.  Clearly we have that $I'$ is an $\ind^{K}$-Morley sequence in $q$ over $B$ and $D_{1}(\pi \cup \{\varphi(x;c'_{i})\},\Delta,k,q) \geq n$ for all $i < \omega$, by (1).  As $\pi \cup \{\varphi(x;c'_{i})\} \vdash \pi' \cup \{\varphi(x;c'_{i})\}$, we have $D_{1}(\pi' \cup \{\varphi(x;c'_{i})\},\Delta,k,q) \geq n$ by induction, which implies $D_{1}(\pi',\Delta,k,q) \geq n+1$.  

(3) We will prove by induction on $l < \omega$ that $D_{1}(\pi,\delta,k,q) \geq n+l$ implies $D_{1}(\pi,\delta,k,q) \geq n$.  For $l = 0$, this is trivial.  Assume it has been shown for $l$ and that $D_{1}(\pi,\Delta,k,q) \geq n + l+1$. Then there is a sequence $I = \langle c_{i} : i < \omega \rangle$ satisfying the conditions of Definition \ref{rank def} such that $D_{1}(\pi \cup \{\varphi(x;c_{i})\}, \Delta, k,q) \geq n + l$ for all $i$.  By (2), this entails, in particular, that $D_{1}(\pi,\Delta, k,q) \geq n+l$ and hence $D_{1}(\pi,\Delta,k,q) \geq n$ by induction. 

(4)  By (2), we have 
$$
 \max_{j < m} D_{1}(\pi \cup \{\psi_{j}(x)\},\Delta,k,q) \leq D_{1}\left(\pi \cup \left\{ \bigvee_{j < m} \psi_{j}(x)\right\},\Delta,k,q\right).
$$
Hence, it suffices to show, for $n < \omega$, that 
$$
D_{1}\left(\pi \cup \left\{ \bigvee_{j < m} \psi_{j}(x)\right\},\Delta,k,q\right) \geq n \implies \max_{j < m} D_{1}(\pi \cup \{\psi_{j}(x)\},\Delta,k,q) \geq n.
$$
Let $C$ be the (finite) set of parameters appearing in the formulas $\psi_{0},\ldots, \psi_{n-1}$.  For $n = 0$, this is clear.  If $D_{1}\left(\pi \cup \left\{ \bigvee_{j < m} \psi_{j}(x)\right\},\Delta,k,q\right) \geq n+1$, then, as in (2), there is an $\ind^{K}$-Morley sequence $I = \langle c_{i} : i < \omega \rangle$ over $B$ in $q$ which is $\mathrm{dom}(\pi)BC$-indiscernible and $\varphi(x;y) \in \Delta$ such that $\{\varphi(x;c_{i}) : i < \omega\}$ is $k$-inconsistent with
$$
D_{1}\left( \pi \cup \left\{ \bigvee_{j < m} \psi_{j}(x) \right\} \cup \{\varphi(x;c_{i})\},\Delta,k,q\right) \geq n,
$$  
which implies $\max_{j<m} D_{1}(\pi \cup \{\psi_{j}(x), \varphi(x;c_{i})\},\Delta,k,q) \geq n$, by the induction hypothesis for $n$, for each $i$.  By the pigeonhole principle, we may assume this maximum witnessed by the same $j$ for all $i < \omega$.  This shows $\max_{j<m} D_{1}(\pi \cup \{\psi_{j}(x)\},\Delta,k,q) \geq n+1$.  
\end{proof}

\begin{rem}
Lemma \ref{basic inequalities}(2) implies, in particular, that if $\pi(x)$ and $\pi'(x)$ are equivalent then the ranks (with respect to a choice of $\Delta$, $k$, and $q$) will be the same, even if they have different domains. 
\end{rem}

\begin{lem} \label{tree witness}
Suppose $T$ is NSOP$_{1}$ with existence.  Suppose $q \in S(B)$, $\Delta(x;y)$ is a finite set of $L(B)$-formulas, and $k < \omega$. Then for all $n < \omega$, we have $D_{1}(\pi,\Delta,k,q) \geq n$ if and only if there are $(c_{\eta})_{\eta \in \omega^{\leq n} \setminus \{\emptyset\}}$ and $\varphi_{i}(x;y) \in \Delta$ for $i < n$ satisfying the following:
\begin{enumerate}[(a)]
\item For all $\eta \in \omega^{n}$, 
$$
\pi(x) \cup \{\varphi_{i}(x;c_{\eta |(i+1)}) : i < n\}
$$
is consistent.
\item For all $\eta \in \omega^{< n}$, $\{\varphi_{l(\eta)}(x;c_{\eta \frown \langle i \rangle}) : i < \omega\}$ is $k$-inconsistent.
\item For all $\eta \in \omega^{< n}$, $\langle c_{\eta \frown \langle i \rangle} : i < \omega \rangle$ is an $\ind^{K}$-Morley sequence over $B$ in $q$.
\item The tree $(c_{\eta})_{\eta \in \omega^{\leq n} \setminus \{\emptyset\}}$ is $s$-indiscernible over $B\mathrm{dom}(\pi)$.  
\end{enumerate}
%\item $D_{2}(\pi,\Delta,k,q) \geq n+1$ if and only if there are $(c_{\eta})_{\eta \in \omega^{\leq n} \setminus \{\emptyset\}}$ and $\varphi_{i}(x;y) \in \Delta$ for $i < n$ satisfying the following:
%\begin{enumerate}
%\item For all $\eta \in \omega^{n}$, 
%$$
%\pi(x) \cup \{\varphi_{i}(x;c_{\eta |(i+1)}) : i < n\}
%$$
%is consistent.
%\item For all $\eta \in \omega^{< n}$, $\{\varphi_{l(\eta)}(x;c_{\eta \frown \langle i \rangle}) : i < \omega\}$ is $k$-inconsistent.
%\item For all $\eta \in \omega^{< n}$, $\langle c_{\eta \frown \langle i \rangle} : i < \omega \rangle$ is an $\ind^{K}$-Morley sequence over $Bc_{\unlhd \eta}$ such that $c_{\eta \frown \langle i \rangle} \models q$ for all $i < \omega$.
%\item The tree $(c_{\eta})_{\eta \in \omega^{\leq n} \setminus \{\emptyset\}}$ is $s$-indiscernible over $B\mathrm{dom}(\pi)$, and for all $\eta \unlhd \nu$ from $\omega^{\leq n} \setminus \{\emptyset\}$, $c_{\eta} \equiv_{Bc_{\vartriangleleft \eta}} c_{\nu}$ and $c_{\nu} \ind^{K}_{B c_{\vartriangleleft \eta}} c_{\eta}$.  
%\end{enumerate}
%\end{enumerate}
\end{lem}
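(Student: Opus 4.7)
The plan is to prove the equivalence by induction on $n$, the case $n = 0$ being immediate since both sides reduce to consistency of $\pi$. For the inductive step from $n$ to $n+1$, I address the two directions separately.

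For the backward direction, given a tree $(c_\eta)_{\eta \in \omega^{\leq n+1} \setminus \{\emptyset\}}$ together with formulas $\varphi_0,\ldots,\varphi_n$ satisfying (a)--(d), I fix $i < \omega$ and consider the reindexed subtree $(c^i_\eta)_{\eta \in \omega^{\leq n} \setminus \{\emptyset\}}$ defined by $c^i_\eta = c_{\langle i\rangle \frown \eta}$, equipped with the formulas $\varphi_1,\ldots,\varphi_n$. Since $\langle i \rangle$ lies $\unlhd$-below every node of this subtree, $s$-indiscernibility of the full tree over $B\mathrm{dom}(\pi)$ yields $s$-indiscernibility of the subtree over $B\mathrm{dom}(\pi)c_{\langle i\rangle}$; conditions (a), (b), and (c) for the subtree relative to the type $\pi(x)\cup\{\varphi_0(x;c_{\langle i\rangle})\}$ are direct translations of the corresponding conditions for the full tree. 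The induction hypothesis then gives $D_1(\pi \cup \{\varphi_0(x;c_{\langle i\rangle})\},\Delta,k,q) \geq n$. Combined with the fact that the top-level sequence $\langle c_{\langle i\rangle} : i < \omega \rangle$ is $\ind^K$-Morley over $B$ in $q$ (by (c) at $\eta = \emptyset$), indiscernible over $B\mathrm{dom}(\pi)$ (a consequence of (d) at the top level), and satisfies the $k$-inconsistency condition (b) for $\varphi_0$, the rank definition yields $D_1(\pi,\Delta,k,q) \geq n+1$.

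For the forward direction, suppose $D_1(\pi,\Delta,k,q) \geq n+1$; the definition supplies $\varphi_0 \in \Delta$ and an $\ind^K$-Morley sequence $\langle c_i : i < \omega \rangle$ in $q$ over $B$, indiscernible over $B\mathrm{dom}(\pi)$, with $\{\varphi_0(x;c_i) : i < \omega\}$ being $k$-inconsistent and $D_1(\pi \cup \{\varphi_0(x;c_i)\},\Delta,k,q) \geq n$ for every $i$. Applying the induction hypothesis at $i = 0$ yields a witness tree $(c^0_\eta)$ and formulas $\psi_0,\ldots,\psi_{n-1} \in \Delta$. Since $c_0 \equiv_{B\mathrm{dom}(\pi)} c_i$, I use a $B\mathrm{dom}(\pi)$-automorphism sending $c_0$ to $c_i$ to transport $(c^0_\eta)$ to a tree $(c^i_\eta)$ witnessing $D_1(\pi \cup \{\varphi_0(x;c_i)\},\Delta,k,q) \geq n$ with the \emph{same} uniform choice of formulas $\psi_0,\ldots,\psi_{n-1}$. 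Gluing produces a tree $(d_\eta)_{\eta \in \omega^{\leq n+1} \setminus \{\emptyset\}}$ defined by $d_{\langle i\rangle} = c_i$ and $d_{\langle i\rangle \frown \eta} = c^i_\eta$, together with formulas $\varphi_0$ and $\varphi_{j+1} := \psi_j$ for $j < n$. By direct verification this tree satisfies (a), (b), and (c), but it generally fails to be $s$-indiscernible.

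The main obstacle is to upgrade the tree to satisfy (d) without destroying (a)--(c). To this end, I apply the modeling property (Fact \ref{modeling}) to replace $(d_\eta)$ with a tree $(d'_\eta)$ that is $s$-indiscernible over $B\mathrm{dom}(\pi)$ and locally based on $(d_\eta)$ over $B\mathrm{dom}(\pi)$. Condition (b), being a first-order $k$-inconsistency statement about a finite tuple of parameters, is preserved directly by local basedness, and (a) is preserved by a standard compactness argument that applies local basedness to each finite subtype of $\pi(x)$ together with the finite set of formulas $\{\varphi_j : j \leq n\}$. The delicate condition is (c): each sibling sequence $\langle d'_{\eta \frown \langle j\rangle} : j < \omega \rangle$ must remain an $\ind^K$-Morley sequence in $q$ over $B$. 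This is precisely where Lemma \ref{type definability of kim-independence}(2) enters, ensuring this property is given by a partial type over $B$; since type-definable conditions over $B$ are preserved by local basedness via compactness applied to their finite subtypes, (c) survives the extraction, completing the inductive step.
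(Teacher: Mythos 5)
Your proof is correct and follows essentially the same route as the paper's: induction on $n$, with the backward direction read off from the subtrees above each top-level node and the forward direction obtained by gluing the inductive witnesses below an $\ind^{K}$-Morley sequence, extracting an $s$-indiscernible tree via the modeling property, and using type-definability of $\ind^{K}$-Morley sequences (Lemma \ref{type definability of kim-independence}(2)) to preserve condition (c). The only cosmetic difference is that you uniformize the formulas by transporting the $i=0$ witness tree with a $B\mathrm{dom}(\pi)$-automorphism, where the paper instead applies the pigeonhole principle to the finite set $\Delta$.
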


\begin{proof}
For the case of $n = 0$, (a) is satisfied if and only if $\pi$ is consistent and (b)\textemdash(d) hold trivially, which gives the desired equivalence. 

%It is easy from the definitions to see that the existence of $(c_{\eta})_{\eta \in \omega^{1}}$ satisfying (1)\textemdash(4) entails $D_{1}(p,\varphi,k,q) \geq 1$.  Conversely, if $D_{1}(p,\varphi,k,q) \geq 1$, then there is an $\ind^{K}$-Morley sequence $I = \langle c_{i} : i < \omega \rangle$ which is $B\mathrm{dom}(p)$-indiscernible such that $\{\varphi(x;c_{i}) : i < \omega\}$ is $k$-inconsistent and $D_{1}(p \cup \{\varphi(x;c_{i})\},\varphi,k,q) \geq 0$.  It is clear that defining $c_{\langle i \rangle} = c_{i}$ satisfies the requirements.

Now assume, for a given $n$, that $D_{1}(\pi,\Delta,k,q) \geq n$ if and only if there are $(c_{\eta})_{\eta \in \omega^{\leq n} \setminus \{\emptyset\}}$ and $\varphi_{i}(x;y) \in \Delta$ for $i < n$ such that (a)\textemdash(d) hold. First, suppose $D_{1}(\pi,\Delta,k,q) \geq n+1$.  Then we can find a $\mathrm{dom}(\pi)B$-indiscernible sequence $I = \langle c_{i} : i < \omega \rangle$ which is also $\ind^{K}$-Morley over $B$ in $q$ and $\varphi(x;y) \in \Delta$ such that $\{\varphi(x;c_{i}) : i < \omega\}$ is $k$-inconsistent and $D_{1}(\pi \cup \{\varphi(x;c_{i})\},\Delta,k,q) \geq n$ for all $i < \omega$.  By induction, for each $i < \omega$, there is a tree $(c_{i,\eta})_{\eta \in \omega^{\leq n} \setminus \{\emptyset\}}$ and sequence of formulas $(\varphi_{i,j}(x;y))_{1 \leq j \leq n}$ from $\Delta$ satisfying (a)\textemdash(d), with $\pi$ replaced by $\pi \cup \{\varphi(x;c_{i})\}$.  As $\Delta$ is a finite set of formulas, we may, by the pigeonhole principle, assume that there are $\varphi_{j} \in \Delta$ such that $\varphi_{i,j}(x;y) = \varphi_{j}(x;y)$ for all $i < \omega$ and $1 \leq j \leq n$. Now define a tree $(c'_{\eta})_{\eta \in \omega^{\leq n+1} \setminus \{\emptyset\}}$ such that $c'_{\langle i \rangle} = c_{i}$ and $c'_{\langle i \rangle \frown \eta} = c_{i,\eta}$ for all $i < \omega$ and $\eta \in \omega^{\leq n} \setminus \{\emptyset\}$.  Define $\varphi_{0}(x;y) = \varphi(x;y)$.  Let $(c_{\eta})_{\eta \in \omega^{\leq n+1} \setminus \{\emptyset\}}$ be a tree that is $s$-indiscernible tree over $\mathrm{dom}(\pi)B$ and locally based on $(c'_{\eta})_{\eta \in \omega^{\leq n+1} \setminus \{\emptyset\}}$ over $B$. Note that for all $\eta \in \omega^{<n+1}$, the sequence $\langle c_{\eta \frown \langle i \rangle} : i < \omega \rangle$ has the same type over $B$ as a Morley sequence in $q$ and hence is a Morley sequence in $q$.  It is clear that $(c_{\eta})_{\eta \in \omega^{\leq n+1} \setminus \{\emptyset\}}$ and $(\varphi_{j}(x;y))_{j < n+1}$ satisfy the requirements.  

Conversely, given $(c_{\eta})_{\eta \in \omega^{\leq n+1} \setminus \{\emptyset\}}$ and $(\varphi_{j}(x;y))_{j < n+1}$ satisfying (a)\textemdash(d), we observe by induction that the tree $(c_{\langle i \rangle \frown \eta})_{\eta \in \omega^{\leq n} \setminus \{\emptyset\}}$ witnesses 
$$
D_{1}(\pi \cup \{\varphi_{0}(x;c_{\langle i \rangle})\},\Delta,k,q) \geq n,
$$ 
for all $i < \omega$.  As the sequence $\langle c_{\langle i \rangle} : i < \omega \rangle$ is $\ind^{K}$-Morley over $B$ and $\mathrm{dom}(\pi)B$-indiscernible, and $\{\varphi_{0}(x;c_{\langle i \rangle}) : i < \omega \}$ is $k$-inconsistent, it follows that
$$
D_{1}(\pi,\Delta,k,q) \geq n+1,$$ 
completing the proof.  
%The proof of (2) is similar.  The construction for implication $D_{2}(\pi,\Delta,k,q) \geq n$ implies there are $(c_{\eta})_{\eta \in \omega^{\leq n} \setminus \{\emptyset\}}$ goes through with the minor changes.  For the converse direction, we suppose that this direction has been established for $n$, and that we are given $(c_{\eta})_{\eta \in \omega^{\leq n+1} \setminus \{\emptyset\}}$ and $(\varphi_{j}(x;y))_{j < n+1}$ satisfying (a)\textemdash(d).  As $D_{2}(\pi,\Delta,k,q) \geq 1$ if and only if $D_{1}(\pi,\Delta,k,q) \geq 1$, we may, by (1), assume $n \geq 2$.  Let $q'(y;c_{\langle 0 \rangle}) = \text{tp}(c_{\langle 0,0 \rangle}/Bc_{\langle 0 \rangle})$.  By (d), we know that if $\langle 0 \rangle \vartriangleright \eta$ then $c_{\eta} \models q'(y;c_{\langle 0 \rangle})$ and hence we see that the tree $(c_{\langle 0 \rangle \frown \eta})_{\eta \in \omega^{\leq n} \setminus \{\emptyset\}}$ witnesses 
%$$
%D_{2}(p \cup \{\varphi_{0}(x;c_{\langle 0 \rangle})\},\Delta,k,q'(y;c_{\langle 0 \rangle})) \geq n,
%$$ 
%which entails
%$$
%D_{2}(p \cup \{\varphi_{0}(x;c_{\langle i \rangle})\},\Delta,k,q'(y;c_{\langle i \rangle})) \geq n,
%$$
%for all $i < \omega$, by $s$-indiscernibility and invariance.  This shows $D_{2}(\pi, \Delta, k, q) \geq n+1$, as desired.  
\end{proof}

\begin{cor} \label{finiteness}
If $T$ is NSOP$_{1}$ with existence, then given $q(y) \in S(B)$, a finite set of $L(B)$-formulas $\Delta(x;y)$, and $k < \omega$, there is some $n < \omega$ such that $D_{1}(x = x, \Delta,k,q) = n$.  
\end{cor}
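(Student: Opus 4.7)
The plan is to argue by contradiction. Suppose $D_{1}(x = x, \Delta, k, q) \geq n$ for every $n < \omega$. For each $n$, Lemma \ref{tree witness} furnishes a tree $(c_\eta^n)_{\eta \in \omega^{\leq n}\setminus\{\emptyset\}}$ and formulas $\varphi_0^n, \ldots, \varphi_{n-1}^n \in \Delta$ satisfying (a)--(d). Since $\Delta$ is finite, a diagonal pigeonhole argument produces a single sequence $(\varphi_i)_{i<\omega} \in \Delta^\omega$ such that there are cofinally many $n$ with $\varphi_i^n = \varphi_i$ for all $i < n$. Using the type-definability of ``$\ind^{K}$-Morley in $q$ over $B$'' from Lemma \ref{type definability of kim-independence}(2), together with the fact that the remaining clauses of (a)--(d) are expressible as partial types or first-order conditions, compactness yields an infinite tree $(c_\eta)_{\eta \in \omega^{<\omega}\setminus\{\emptyset\}}$ satisfying the natural infinite analogues of (a)--(d) with this fixed formula sequence.

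Next, a second pigeonhole on $(\varphi_i)_{i<\omega}$, valued in the finite set $\Delta$, produces some $\psi \in \Delta$ appearing at cofinally many indices $i_0 < i_1 < \cdots$. Restricting attention to the sub-tree whose nodes live at those levels and re-indexing via $s$-indiscernibility, we may assume $\varphi_i = \psi$ for every $i$. The plan is now to extract an SOP$_{1}$ pattern for an appropriate derivative of $\psi$, contradicting NSOP$_{1}$. To upgrade $k$-inconsistency of sibling cones to the pairwise inconsistency required by Definition \ref{sop1def}, we pass to $\psi^{*}(x; y_1, \ldots, y_{k-1}) := \bigwedge_{l=1}^{k-1}\psi(x; y_l)$ and embed $2^{<\omega}$ into the tree by sending each $\mu$ of length $m$ to a $(k-1)$-tuple $a_\mu$ of siblings at level $m$, encoding bit $0$ by sibling indices $\{0, \ldots, k-2\}$ and bit $1$ by $\{k-1, \ldots, 2k-3\}$; any two such tuples for bits $\eta \frown 0$ and $\eta \frown 1$ jointly involve $2(k-1) \geq k$ children of a common parent, forcing $\{\psi^{*}(x; a_{\eta \frown 0}), \psi^{*}(x; a_{\eta \frown 1})\}$ to be inconsistent by $k$-inconsistency.

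Path consistency of $\psi^{*}$-instances along binary paths follows by combining the path consistency (a) with $s$-indiscernibility. The most delicate step, which I expect to be the main obstacle, is verifying the full SOP$_{1}$ inconsistency condition $\{\psi^{*}(x; a_\nu), \psi^{*}(x; a_{\eta \frown 1})\}$ inconsistent for \emph{all} $\nu \unrhd \eta \frown 0$, including $\nu$ strictly below $\eta \frown 0$ in the binary tree. Here one exploits the $\ind^{K}$-Morley structure on sibling cones, which by Theorem \ref{witnessing} forces the relevant $\psi$-instances at the common branching level to Kim-divide over $B$; then the chain condition (Fact \ref{basic Kim-independence facts}(4)) and the independence theorem (Fact \ref{basic Kim-independence facts}(5)) can be used to propagate this inconsistency from the immediate sibling level down to arbitrary descendants in the encoded binary sub-tree. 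The resulting SOP$_{1}$-pattern for $\psi^{*}$ contradicts the NSOP$_{1}$ hypothesis on $T$, completing the proof.
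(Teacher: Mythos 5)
Your first half (compactness to an infinite tree, pigeonhole over the finite set $\Delta$, and passing to a subtree on which a single formula $\psi$ is used at every level) matches the paper's opening move and is fine, provided the re-indexing interleaves zeros so that each sibling family of the new tree is still a family of children of a single node of the old tree; then conditions (b)--(d) of Lemma \ref{tree witness} are literally preserved and (a) passes to subsets. The trouble is the second half. The configuration only gives $k$-inconsistency among \emph{siblings at a fixed level}; nothing in (a)--(d) makes an instance $\psi^{*}(x;a_{\nu})$ at a node strictly below $\eta\frown\langle 0\rangle$ inconsistent with $\psi^{*}(x;a_{\eta\frown\langle 1\rangle})$, and in general it will not be: path-consistency together with $s$-indiscernibility tends to make such ``descendant versus uncle'' pairs consistent. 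So the clause of Definition \ref{sop1def} quantifying over all $\nu\unrhd\eta\frown\langle 0\rangle$ --- exactly the step you flag as delicate --- is a genuine gap, and the tools you name do not fill it: Theorem \ref{witnessing} applied to the sibling sequences only yields that each individual instance Kim-divides over $B$, while the chain condition and the independence theorem are amalgamation statements that produce \emph{consistency}, not the needed inconsistencies. Making such a propagation work would essentially amount to re-proving the hard direction of Kim's lemma/witnessing from scratch.

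The paper sidesteps all of this by never extracting an SOP$_{1}$ pattern directly. From the single-formula tree it considers the all-zeros path $\langle c_{0^{n}} : 1\leq n<\omega\rangle$: the instances $\varphi(x;c_{0^{n}})$ along it are simultaneously consistent by (a); each one Kim-divides over $B$, since the sibling sequence at its level is a Morley ($\ind^{K}$-Morley) sequence in $q$ along which $\varphi$ is $k$-inconsistent; and, applying Theorem \ref{witnessing} to those sibling sequences (which are indiscernible over the ancestors by $s$-indiscernibility), one gets $c_{0\frown 0^{<n}}\ind^{K}_{B}c_{0^{n+1}}$, so a $B$-indiscernible sequence locally based on the path is an $\ind^{K}$-Morley sequence over $B$. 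A consistent family of instances, each Kim-dividing over $B$, along an $\ind^{K}$-Morley sequence over $B$ contradicts Theorem \ref{witnessing} outright. Replacing your SOP$_{1}$-extraction with this path argument (you are free to cite Theorem \ref{witnessing}) repairs the proof.
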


\begin{proof}
Suppose towards contradiction that there are $q$, $\Delta$, and $k < \omega$ such that $D(x=x,\Delta, k,q) > n$ for all $n < \omega$.  By Lemma \ref{tree witness}, Lemma \ref{type definability of kim-independence}, compactness, and the pigeonhole principle (by the finiteness of $\Delta$), we can find a tree $(c_{\eta})_{\eta \in \omega^{<\omega} \setminus \{\emptyset\}}$ and $\varphi(x;y) \in \Delta$ satisfying the following:
\begin{enumerate}[(a)]
\item For all $\eta \in \omega^{\omega}$, 
$$
\{\varphi(x;c_{\eta | (i+1)}) : i < \omega\}
$$
is consistent.
\item For all $\eta \in \omega^{< \omega }$, $\{\varphi(x;c_{\eta \frown \langle i \rangle}) : i < \omega\}$ is $k$-inconsistent.
\item For all $\eta \in \omega^{< \omega}$, $\langle c_{\eta \frown \langle i \rangle} : i < \omega \rangle$ is an $\ind^{K}$-Morley sequence over $B$ in $q$.
\item The tree $(c_{\eta})_{\eta \in \omega^{<\omega} \setminus \{\emptyset\}}$ is $s$-indiscernible over $B$.  
\end{enumerate}
Note that, by $s$-indiscernibility, we have $\langle c_{0^{n} \frown \langle i \rangle} : i < \omega \rangle$ is an $\ind^{K}$-Morley sequence over $B$ which is $Bc_{0\frown 0^{< n}}$-indiscernible, for all $n < \omega$.  By witnessing, it follows that $c_{0 \frown 0^{<n}} \ind^{K}_{B} c_{0^{n+1}}$ for all $n$.  Let $\langle d_{i} : i < \omega \rangle$ be a $B$-indiscernible sequence locally based on $\langle c_{0^{n}} : 1 \leq n < \omega \rangle$ over $B$.  As each $c_{0^{n}} \models q$, for $1 \leq n < \omega$, we have that $d_{<n} \ind^{K}_{B} d_{n}$ for all $n$ as well, and therefore $\langle d_{i} : i < \omega \rangle$ is an $\ind^{K}$-Morley sequence over $B$ in $q'$.  Moreover, since $\{\varphi(x;c_{0^{n}}) : 1 \leq n < \omega\}$ is consistent, we know $\{\varphi(x;d_{n}) : n < \omega\}$ is consistent.  However, we know $\varphi(x;c_{0^{n}})$ Kim-divides over $B$ and hence $\varphi(x;d_{n})$ Kim-divides over $B$.  This contradicts witnessing (Theorem 2.5).  
\end{proof}

\begin{rem}
Corollary \ref{finiteness} has a converse:  if $T$ has SOP$_{1}$, then for some $B$, there is a $q(y) \in S(B)$, a finite set of $L(B)$-formulas $\Delta(x;y)$, and $k < \omega$ such that $D_{1}(x = x, \Delta,k,q) = \infty$.  One way to see this is to note that by \cite[Corollary 3.7]{dobrowolski2019independence} Kim's Lemma for non-forking Morley sequences fails over some set $B$ in any theory with SOP$_{1}$ (this doesn't use existence).  Concretely, this means there are Morley sequences $I = \langle a_{i} : i < \omega \rangle$ and $J = \langle b_{i} : i < \omega \rangle$ over $B$ with $a_{0} = b_{0}$ and a formula $\varphi(x;y)$ such that $\{\varphi(x;a_{i}) : i < \omega\}$ is $k$-inconsistent and $\{\varphi(x;b_{i}) : i < \omega\}$ is consistent. Then, by \cite[Lemma 3.4]{dobrowolski2019independence}, this implies there is a tree $(c_{\eta})_{\eta \in \omega^{<\omega}}$ satisfying the following properties:
\begin{enumerate}
\item For all $\eta \in \omega^{<\omega}$, $(c_{\eta \frown \langle i \rangle})_{i < \omega} \equiv_{B} I$.  
\item For all $\eta \in \omega^{<\omega}$, $(c_{\eta},c_{\eta | l(\eta) - 1}, \ldots, c_{\emptyset}) \equiv_{B} (b_{0},b_{1},\ldots, b_{l(\eta)})$.  
\item $(c_{\eta})_{\eta \in \omega^{<\omega}}$ is $s$-indiscernible over $B$. 
\end{enumerate}
Then, by Lemma \ref{tree witness}, for $q = \text{tp}(a_{0}/B)$, we have $D_{1}(x = x, \Delta, k,q) \geq n$ for all $n$. 
\end{rem}

\begin{lem} \label{finite or complete}
Suppose $T$ is NSOP$_{1}$ with existence.  Suppose $q(y) \in S(B)$, $\Delta(x,y)$ is a finite set of $L(B)$-formulas, and $k < \omega$.  
\begin{enumerate}
\item For any partial type $p$, there is a finite $r \subseteq p$ such that 
$$
D_{1}(p,\Delta,k,q) = D_{1}(r,\Delta,k,q).
$$
\item For any small set $A \subseteq \mathbb{M}$ and any partial type $p$ over $A$, there is $p' \in S(A)$ extending $p$ such that 
$$
D_{1}(p,\Delta,k,q) = D_{1}(p,\Delta,k,q).
$$
\end{enumerate}
\end{lem}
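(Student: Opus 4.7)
The plan is to combine the finite-tree characterization of the rank (Lemma \ref{tree witness}), the uniform finiteness of the rank (Corollary \ref{finiteness}), the finiteness of $\Delta$, and compactness.

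For (1), by Corollary \ref{finiteness} and Lemma \ref{basic inequalities}(2) the set $\{D_{1}(r,\Delta,k,q) : r \subseteq p \text{ finite}\}$ is a set of natural numbers bounded above, hence has a minimum value $m$ attained by some finite $r_{0} \subseteq p$. By Lemma \ref{basic inequalities}(2), $D_{1}(p,\Delta,k,q) \leq m$; I will show equality. For every finite $r$ with $r_{0} \subseteq r \subseteq p$ one has $D_{1}(r,\Delta,k,q) = m$ by minimality, so Lemma \ref{tree witness} supplies a witnessing tree together with a tuple of $m$ formulas from $\Delta$. Since $\Delta^{m}$ is finite, pigeonhole yields some $(\varphi_{0}, \ldots, \varphi_{m-1}) \in \Delta^{m}$ such that for every finite $r' \subseteq p$ there is a finite $r \supseteq r' \cup r_{0}$ whose witness uses precisely this tuple. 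I then assemble these finite-$r$ trees into a tree for $p$ by compactness: form a partial type over $B\mathrm{dom}(p)$ in variables $(z_{\eta})_{\eta \in \omega^{\leq m}\setminus \{\emptyset\}}$ and leaf-witness variables $(x_{\eta_{0}})_{\eta_{0} \in \omega^{m}}$ asserting (i) $s$-indiscernibility of $(z_{\eta})$ over $B\mathrm{dom}(p)$; (ii) that each branch $\langle z_{\eta \frown \langle i \rangle} : i < \omega \rangle$ is $\ind^{K}$-Morley over $B$ in $q$, which is type-definable by Lemma \ref{type definability of kim-independence}(2); (iii) that $\{\varphi_{l(\eta)}(x; z_{\eta \frown \langle i \rangle}) : i < \omega\}$ is $k$-inconsistent for each $\eta \in \omega^{<m}$; and (iv) that each $x_{\eta_{0}}$ realizes $p(x) \cup \{\varphi_{i}(x; z_{\eta_{0}|(i+1)}) : i < m\}$. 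Any finite subtype mentions only finitely many formulas of $p$ and is realized by the tree and witnesses from any sufficiently large $r$ from the pigeonholed family; a realization of the whole type then yields, via Lemma \ref{tree witness}, $D_{1}(p,\Delta,k,q) \geq m$.

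For (2), set $n = D_{1}(p,\Delta,k,q)$ and suppose for contradiction that $D_{1}(p',\Delta,k,q) \leq n-1$ for every $p' \in S(A)$ extending $p$. Applying (1) to each such $p'$ produces a finite $r_{p'} \subseteq p'$ with $D_{1}(r_{p'},\Delta,k,q) \leq n-1$; its conjunction $\chi_{p'}(x)$ is then a single formula in $p'$ with rank $\leq n-1$ (by Lemma \ref{basic inequalities}(2), as $\chi_{p'}$ is logically equivalent to $r_{p'}$). The formulas $\chi_{p'}$ cover the closed subset of $S(A)$ consisting of completions of $p$, so by Stone-space compactness finitely many $\chi_{1}, \ldots, \chi_{l}$ suffice, giving $p \vdash \bigvee_{j \leq l} \chi_{j}$. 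Since adding a consequence of $p$ does not alter the rank (Lemma \ref{basic inequalities}(2) in both directions), Lemma \ref{basic inequalities}(3) yields
\[
D_{1}(p,\Delta,k,q) = D_{1}\bigl(p \cup \{\textstyle\bigvee_{j} \chi_{j}\},\Delta,k,q\bigr) = \max_{j \leq l} D_{1}(p \cup \{\chi_{j}\},\Delta,k,q) \leq n-1,
\]
contradicting the choice of $n$.

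The main obstacle is the compactness step of (1): one must organize the partial type so that $s$-indiscernibility over $B\mathrm{dom}(p)$ and the $\ind^{K}$-Morley conditions are genuinely type-definable (the latter relying on Lemma \ref{type definability of kim-independence}(2)), encode the consistency condition (a) of Lemma \ref{tree witness} via explicit leaf-witness variables, and verify that any finite fragment is satisfiable by the tree attached to a single sufficiently large $r$ from the pigeonholed family.
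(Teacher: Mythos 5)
Your proposal is correct, and part (2) is essentially the paper's argument in different packaging: where you cover the (closed) set of completions of $p$ by finitely many low-rank formulas $\chi_{j}$ via Stone-space compactness and then contradict $D_{1}(p,\Delta,k,q)=n$ through Lemma \ref{basic inequalities}(3), the paper runs the same compactness argument contrapositively, forming $\Gamma(x)=\{\neg\psi : D_{1}(p\cup\{\psi\},\Delta,k,q)<D_{1}(p,\Delta,k,q)\}$, showing $p\cup\Gamma$ is consistent, and using part (1) to see that any completion of $p\cup\Gamma$ keeps full rank. For part (1), however, your route is genuinely different from the paper's. The paper argues ``downward'': since $D_{1}(p,\Delta,k,q)=n<n+1$, for each of the finitely many $(n+1)$-tuples $\overline{s}$ from $\Delta$ the partial type (built exactly as yours, via Lemma \ref{type definability of kim-independence}) asserting a height-$(n+1)$ witness tree as in Lemma \ref{tree witness} is \emph{inconsistent}; compactness then extracts a finite $r_{\overline{s}}\subseteq p$ preserving that inconsistency, and the union over the finitely many tuples gives a finite $r$ with $D_{1}(r,\Delta,k,q)\leq n$. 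You argue ``upward'': you minimize the rank $m$ over finite subtypes, fix a tuple in $\Delta^{m}$ by a pigeonhole/cofinality argument over the directed family of finite $r\supseteq r_{0}$, and assemble a height-$m$ tree for $p$ itself by compactness, concluding $D_{1}(p,\Delta,k,q)\geq m$. Both hinge on Lemma \ref{tree witness}, finiteness of $\Delta$, Corollary \ref{finiteness}, and type-definability; the paper's version is a bit shorter because the inconsistency transfers to a finite subtype with no pigeonhole and no satisfiability check, while yours needs the cofinality argument and the verification that finite fragments of your assembled type are realized. On that last point, note one small detail you should make explicit: a finite fragment of your type mentions not only finitely many formulas of $p$ (in the leaf-witness clauses) but also finitely many parameters of $\mathrm{dom}(p)$ in the $s$-indiscernibility clauses, and the witness tree for a finite $r$ is only $s$-indiscernible over $B\,\mathrm{dom}(r)$; this is repaired by enlarging $r$ to include formulas of $p$ containing those parameters, which your ``sufficiently large $r$'' can be read as doing, so it is a routine refinement rather than a gap.
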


\begin{proof}
(1)  By Lemma \ref{basic inequalities}(1), if $r$ is a subtype of $p$ then $D_{1}(r,\Delta,k,q) \geq D_{1}(p,\Delta,k,q)$ so it suffices to find a finite $r \subseteq p$ with $D_{1}(r,\Delta,k,q) \leq D_{1}(p,\Delta,k,q)$.  Suppose $D_{1}(p,\Delta,k,q) = n$.  Consider, for each sequence $\overline{s} = (\varphi_{i}(x;y))_{i < n+1}$ of formulas from $\Delta$, the set of formulas $\Gamma_{\overline{s}}(x,(z_{\eta})_{\eta \in \omega^{\leq n+1} \setminus \{\emptyset\}})$ over $\mathrm{dom}(p)B$ expressing the following:
\begin{enumerate}
\item For all $\eta \in \omega^{n+1}$, 
$$
p(x) \cup \{\varphi_{i}(x;z_{\eta |(i+1)}) : i < n+1\}
$$
is consistent.
\item For all $\eta \in \omega^{< n+1}$, $\{\varphi_{l(\eta)}(x;z_{\eta \frown \langle i \rangle}) : i < \omega\}$ is $k$-inconsistent.
\item For all $\eta \in \omega^{< n+1}$, $\langle z_{\eta \frown \langle i \rangle} : i < \omega \rangle$ is an $\ind^{K}$-Morley sequence over $B$ in $q$ (possible by Lemma \ref{type definability of kim-independence}(2)).
\item The tree $(z_{\eta})_{\eta \in \omega^{\leq n+1} \setminus \{\emptyset\}}$ is $s$-indiscernible over $B\mathrm{dom}(p)$.  
\end{enumerate}
By Lemma \ref{tree witness} and the fact that $D_{1}(p,\Delta,k,q) = n < n+1$, we know that $\Gamma_{\overline{s}}$ is inconsistent.  By compactness, there is some finite $r_{\overline{s}}(x) \subseteq p(x)$ such that, replacing $p(x)$ with $r_{\overline{s}}(x)$ in (1), the formulas remain inconsistent.  Let $r$ be the union of $r_{\overline{s}}$ as $\overline{s}$ ranges over all length $n+1$ sequences of formulas of $\Delta$.  As $\Delta$ is finite, this is a finite set, so $r$ is a finite extension of each $r_{\overline{s}}$ and a subtype of $p$.  Then by Lemma \ref{tree witness} again, $D_{1}(r,\Delta,k,q) \leq n = D_{1}(p,\Delta,k,q)$.  

(2)  Let $\Gamma(x)$ be defined as follows:
$$
\Gamma(x) =  \{ \neg \psi(x) \in L(A) : D_{1}(p(x) \cup \{\psi(x)\}, \Delta,k,q) < D_{1}(p,\Delta,k,q)\}.
$$
If $p(x) \cup \Gamma(x)$ is inconsistent, then by compactness, there are $\psi_{0},\ldots, \psi_{n-1} \in \Gamma$ such that $p(x) \vdash \bigvee_{j < n} \psi_{j}$.  By Lemma \ref{basic inequalities}(1) and (2), this gives
\begin{eqnarray*}
D_{1}(p,\Delta,k,q) &=& D_{1} \left(p \cup \left\{\bigvee_{j < n} \psi_{j}(x)\right\},\Delta,k,q\right) \\
&=& \max_{j} D_{1}(p \cup \{\psi_{j}(x)\},\Delta,k,q) \\
&<& D_{1}(p,\Delta,k,q),
\end{eqnarray*}
a contradiction.  Therefore, we can choose a complete type $p' \in S(A)$ extending $p(x) \cup \Gamma(x)$.  By (1), if $D_{1}(p',\Delta,k,q) < D_{1}(p,\varphi,k,q)$, then there is a formula $\psi(x) \in p'$ such that $D_{1}(p \cup \{\psi(x)\}, \Delta,k,q) < D_{1}(p,\Delta,k,q)$ but this is impossible by the definition of $\Gamma$.  Therefore $p'$ is as desired.  
\end{proof}

\begin{thm}  \label{rank drop}
Assume $T$ is NSOP$_{1}$ with existence. 
\begin{enumerate}
\item Suppose $\pi$ is a partial type over $B$ and $\pi \subseteq \pi'$. If $\pi'$ Kim-divides over $B$, witnessed by the formula $\varphi(x;c_{0})$, and $I = \langle c_{i} : i < \omega \rangle$ is an $\ind^{K}$-Morley sequence over $B$ in $q$ such that $\{\varphi(x;c_{i}) : i < \omega\}$ is $k$-inconsistent, then we have 
$$
D_{1}(\pi',\varphi,k,q) < D_{1}(\pi,\varphi,k,q).
$$
\item If $T$ is simple, $a \ind_{BD} C$, then for any $q(y) \in S(B)$, any finite set of $L(B)$-formulas $\Delta(x,y)$, and any $k < \omega$, we have 
$$
D_{1}(p', \Delta,k,q) = D_{1}(p,\Delta,k,q),
$$
where $p' = \mathrm{tp}(a/BDC)$ and $p = \mathrm{tp}(a/BD)$.  
\end{enumerate}
\end{thm}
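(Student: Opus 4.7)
\smallskip

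\noindent\textbf{Proof proposal.} For part (1), set $m = D_{1}(\pi',\varphi,k,q)$; the aim is to produce a sequence witnessing $D_{1}(\pi,\varphi,k,q) \geq m+1$. Extract from $I$ a $B$-indiscernible sequence $I'' = \langle c''_{i} : i < \omega \rangle$ locally based on $I$ over $B$. Because $\pi$ is over $B$, clause (2) of the definition reduces to $B$-indiscernibility; being an $\ind^{K}$-Morley sequence over $B$ in $q$ is type-definable over $B$ (Lemma \ref{type definability of kim-independence}), hence preserved, and $k$-inconsistency of $\{\varphi(x;c''_{i}) : i < \omega\}$ is a first-order property preserved by local basedness. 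For the remaining clause, local basedness together with the $B$-indiscernibility of $I$ gives $c''_{0} \equiv_{B} c_{0}$, so there is an automorphism $\sigma$ fixing $B$ with $\sigma(c_{0}) = c''_{0}$; setting $\pi'' = \sigma(\pi')$ we obtain $\pi'' \supseteq \pi$ (since $\pi$ is over $B$) and $\pi'' \vdash \varphi(x;c''_{0})$. Combining Lemma \ref{basic inequalities}(1) and (2),
$$
D_{1}(\pi \cup \{\varphi(x;c''_{0})\}, \varphi, k, q) \geq D_{1}(\pi'', \varphi, k, q) = D_{1}(\pi', \varphi, k, q) = m.
$$
A further application of Lemma \ref{basic inequalities}(1), conjugating by $B$-automorphisms sending $c''_{0}$ to each $c''_{i}$, propagates this bound to every $i < \omega$, so $I''$ witnesses $D_{1}(\pi,\varphi,k,q) \geq m + 1$, as required.

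For part (2), the inequality $D_{1}(p',\Delta,k,q) \leq D_{1}(p,\Delta,k,q)$ is immediate from Lemma \ref{basic inequalities}(2). For the reverse, I would induct on $n = D_{1}(p,\Delta,k,q)$, the base case $n=0$ being trivial. In the inductive step, given a witness sequence $I = \langle c_{i} : i < \omega\rangle$ and formula $\varphi \in \Delta$ for $D_{1}(p,\Delta,k,q) \geq n+1$, I would use that in a simple theory $\ind^{K}$ coincides with $\ind$ and that extension, symmetry, transitivity, base monotonicity and the chain condition are all available. Apply extension to find $I' \equiv_{BDa} I$ with $I' \ind_{BDa} C$; then $aI' \ind_{BD} C$ and so, by the pair decomposition applied to $C \ind_{BD} aI'$, one obtains $a \ind_{BDI'} C$, and hence (by monotonicity in the right-hand argument) $a \ind_{BDc'_{i}} C$ for each $i$. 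The strategy is then to promote $I'$ to a sequence $I''$ that is additionally $BDC$-indiscernible while retaining Morleyness over $B$ in $q$ and $k$-inconsistency of $\{\varphi(x;c''_{i})\}$, extend each $p \cup \{\varphi(x;c''_{i})\}$ to a complete type over $BDc''_{i}$ via Lemma \ref{finite or complete}(2), and apply the inductive hypothesis with base $BDc''_{i}$ and additional parameters $C$ to conclude $D_{1}(p' \cup \{\varphi(x;c''_{i})\}, \Delta, k, q) \geq n$ for all $i$, yielding $D_{1}(p',\Delta,k,q) \geq n+1$.

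The main obstacle is the promotion step: $I'$ inherits only $BD$-indiscernibility (not Morleyness over $BD$) from $I$, so the standard chain condition does not apply verbatim. I expect this is most cleanly addressed by working at the level of the witness tree from Lemma \ref{tree witness} rather than a single sequence: move a whole $BD$-$s$-indiscernible witness tree $(c_{\eta})$ by an automorphism over $BDa$ so that $(c_{\eta}) \ind_{BDa} C$, and argue, applying the simple-theory chain condition separately to each Morley level of the tree, that the moved tree is $BDC$-$s$-indiscernible. Consistency of each branch with $p'$ (rather than merely with $p$) is then verified using the independence theorem for simple theories, combined with the derived relations $a \ind_{BDc_{\eta}} C$ and the fact that $a_{\eta} \equiv_{BD} a$ for the branch-realizers $a_{\eta}$. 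Given this, the tree characterization of $D_{1}(p',\Delta,k,q) \geq n+1$ from Lemma \ref{tree witness} is directly verified, completing the induction.
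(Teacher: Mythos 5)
Part (1) of your proposal is correct and is essentially the paper's own argument, run forwards (exhibiting a witness for $D_{1}(\pi,\varphi,k,q)\geq D_{1}(\pi',\varphi,k,q)+1$) rather than by contradiction; the only implicit ingredient is finiteness of the ranks (Corollary \ref{finiteness}), which the paper also uses tacitly.

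For part (2) the skeleton is right (one inequality from Lemma \ref{basic inequalities}(2), induction on $n$, repositioning $C$, base monotonicity to get $a \ind_{BDc'_{i}} C$, completions via Lemma \ref{finite or complete}(2), inductive hypothesis over the larger base), but there is a genuine gap at the decisive step. To extract $D_{1}(p' \cup \{\varphi(x;c''_{i})\},\Delta,k,q)\geq n$ from the inductive hypothesis you need one tuple which simultaneously (i) realizes a completion of $p \cup \{\varphi(x;c''_{i})\}$ over $BDc''_{i}$ of rank $\geq n$, (ii) is $\ind$-independent from $C$ over $BDc''_{i}$, and (iii) has type $p'$ over $BDC$. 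You have arranged (ii) and (iii) for $a$ itself, but there is no reason that $\mathrm{tp}(a/BDc''_{i})$ has rank $\geq n$; conversely, an abstract realization of the rank-preserving completion supplied by Lemma \ref{finite or complete}(2) need not realize $p'$ nor be independent from $C$, so the inductive hypothesis applied to it says nothing about $p'$. The paper resolves this by an ordering your construction cannot reproduce at its final stage: it first fixes a rank-preserving completion $p_{*}$ over $BDc_{0}$ and conjugates the witness sequence over $BD$ so that the given $a$ realizes $p_{*}$, and only then moves $C$ over $BDa$ (preserving $p'$ up to conjugacy) and replaces the tail of $I$ over $BDc_{0}$ by a $BCD$-indiscernible copy, so that base monotonicity gives $a \ind_{BDc'_{0}} C$ with $\mathrm{tp}(a/BDc'_{0})=p_{*}$. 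In your ordering the completion comes last, after $I''$ and the position of $C$ are frozen, and there is no room left to move anything.

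Your fallback tree-level plan does not repair this. Applying the simple-theory chain condition ``separately to each Morley level'' does not yield $s$-indiscernibility of the whole tree over $BDC$ (one would still need an extraction/modeling argument, and then preservation of branch data over $BDC$ is exactly what is in question). More seriously, the proposed use of the independence theorem to make each branch consistent with $p'$ requires the branch realizer $a_{\eta}$ to be independent over $BD$ from the branch parameters, and this fails: the branch formulas Kim-divide (equivalently, divide) over $B$, so a realization of a branch is in general dependent on it; $a_{\eta}\equiv_{BD} a$ and $a \ind_{BD} C$ alone do not furnish the hypotheses of any amalgamation over $BD$ (and the independence theorem under existence is in any case stated for Lascar strong types). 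This missing independence is precisely the difficulty the paper's rank induction with completions is designed to circumvent, so the branch-consistency step cannot be discharged as sketched. A minor additional slip: $a \ind_{BDc'_{i}} C$ does not follow ``by monotonicity in the right-hand argument'' from $a \ind_{BDI'} C$; it follows from $C \ind_{BD} aI'$ by base monotonicity and symmetry, which you do have available.
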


\begin{proof}
Suppose we are given $\pi \subseteq \pi'$, $\varphi$, $k$, $I$, and $q$ as in the statement.  We claim that $D_{1}(\pi \cup \{\varphi(x;c_{0})\},\varphi,k,q) < D_{1}(\pi,\varphi,k,q)$.  If not, then, again by Lemma \ref{basic inequalities}(1), we have 
$$
n := D_{1}(\pi \cup \{\varphi(x;c_{0})\},\varphi,k,q) = D_{1}(\pi,\varphi,k,q),
$$ 
and therefore, by $B$-indiscernibility, $D_{1}(\pi \cup \{\varphi(x;c_{i})\},\varphi,k,q) = n$ for all $i$.  This implies, by definition of the rank, that $D_{1}(\pi,\varphi,k,q) \geq n+1$, a contradiction.  Therefore, since $\varphi(x;c_{0}) \in \pi'$, we have
$$
D_{1}(\pi',\varphi,k,q) \leq D_{1}(\pi \cup \{\varphi(x;c_{0})\},\varphi,k,q) < D_{1}(\pi,\varphi,k,q),
$$
which proves (1).  

Now we prove (2).  As we are working in a simple theory, Kim-dividing and forking coincide by Kim's lemma \cite{kim1998forking}.  Fix an arbitrary $q \in S(B)$, finite set of $L(B)$-formulas $\Delta(x,y)$ and $k < \omega$.  By Lemma \ref{basic inequalities}(2), we have 
$$
D_{1}(p', \Delta,k,q) \leq D_{1}(p,\Delta,k,q).
$$
Hence, it suffices to show, by induction on $n < \omega$, 
$$
D_{1}(p,\Delta,k,q) \geq n \implies D_{1}(p', \Delta,k,q) \geq n.  
$$
For $n = 0$, this is clear, so assume it holds for $n$ and suppose $D_{1}(p,\Delta,k,q) \geq n+1$.  Then there is a Morley sequence $I = \langle c_{i} : i < \omega \rangle$ over $B$ in $q$ which is $BD$-indiscernible such that, for some $\varphi(x,y) \in \Delta$, $\{\varphi(x;c_{i}) : i < \omega\}$ is $k$-inconsistent and $D_{1}(p \cup \{\varphi(x;c_{i}) \}, \Delta, k, q) \geq n$ for all $i < \omega$. Let $p_{*} = p_{*}(x;c_{0})$ be a completion of $p \cup \{\varphi(x;c_{0})\}$ over $BDc_{0}$ with
$$
D_{1}(p_{*},\Delta,k,q) = D_{1}(p \cup \{\varphi(x;c_{0})\},\Delta,k,q) \geq n,
$$
which is possible by Lemma \ref{finite or complete}(2).  Without loss of generality, $a \models p_{*}$.

By extension and an automorphism over $aBD$, we may assume $C \ind_{BD} ac_{0}$, and hence there is $I' = \langle c'_{i} : i < \omega \rangle \equiv_{BDc_{0}} I$ such that $I'$ is $BCD$-indiscernible.  Note that $I'$ is still Morley in $q$.  Moreover, by base monotonicity and symmetry, this gives $a \ind_{BDc'_{0}} C$.  Let $p_{*}' = \text{tp}(a/BCDc'_{0}) \supseteq p'$. By the inductive hypothesis, the fact that $D_{1}(p_{*},\Delta,k,q) \geq n$ gives 
$$
D_{1}(p_{*}',\Delta,k,q) \geq n.
$$
By Lemma \ref{basic inequalities}(1) and (2), we obtain 
$$
D_{1}(p' \cup \{\varphi(x;c'_{i})\}, \Delta,k,q) \geq n
$$
for all $i < \omega$, which allows us to conclude $D_{1}(p',\Delta,k,q) \geq n+1$, completing the proof.  \end{proof}
%If $T$ is simple, then Kim-dividing and forking coincide so $(2)$ shows that if for all $\varphi(x;y) \in L(B)$, $q(y) \in S(B)$, and $k < \omega$, 
%$$
%D_{1}(p',\varphi,k,q) = D_{1}(p,\varphi,k,q),
%$$
%then $p'$ does not fork over $B$.  To conclude, we will prove by induction on $n$ that, for all $B$, if $p$ is a type over $B$ and $p'$ is a non-forking extension of $p$, then for any $\varphi \in L(B)$, $k < \omega$, and Lascar strong type $q$ over $B$, we have 
%$$
%D_{1}(p,\varphi,k,q) \geq n \implies D_{1}(p',\varphi,k,q) \geq n.
%$$  
%For $n = 0$, this is clear.  Assume it has been established for $n$ and suppose we are given $\varphi$, $k$, and $q$ with $D_{1}(p,\varphi,k,q) \geq n+1$.  Let $\langle c_{i} : i < \omega \rangle$ be a Morley sequence over $B$ in $q$ which is $\mathrm{dom}(\pi)B$-indiscernible and such that $\{\varphi(x;c_{i}) : i < \omega \}$ is $k$-inconsistent and $D_{1}(p \cup \{\varphi(x;c_{i})\},\varphi,k,q) \geq n$ for all $i < \omega$.  By Kim's lemma, we have $c_{0} \ind_{B} C$ and, thus, by Lemma 
%
%
%By Lemma \ref{finite or complete}(2), FINISH

\begin{rem}
By witnessing (Theorem \ref{witnessing}), we know that if $\pi$ is a partial type over $B$ and $\pi'$ Kim-divides over $B$, then this will be witnessed by some formula $\varphi(x;c_{0})$ implied by $\pi'$ and an $\ind^{K}$-Morley sequence over $B$.  Therefore, Theorem \ref{rank drop}(1) implies that, if for all $\varphi(x;y) \in L(B)$, $q(y) \in S(B)$, and $k < \omega$, 
$$
D_{1}(\pi',\varphi,k,q) = D_{1}(\pi,\varphi,k,q),
$$
then $\pi'$ does not Kim-divide over $B$.  
\end{rem}

\begin{quest}
Does Theorem \ref{rank drop}(2) hold for $\ind^{K}$ in all NSOP$_{1}$ theories satisfying existence?  Evidently, the proof above makes use of base monotonicity, which is known to fail in all non-simple NSOP$_{1}$ theories.  
\end{quest}

\section{The Kim-Pillay theorem over arbitrary sets} \label{Kim-Pillay theorem}

The Kim-Pillay-style criterion for NSOP$_{1}$ of \cite{ArtemNick} proceeds, essentially, by first showing that any relation that satisfies axioms (1)\textemdash (5) over models in the axioms below must be weaker than coheir independence, in the sense that if $M \models T$ and $\text{tp}(a/Mb)$ is finitely satisfiable in $M$ then $a \ind_{M} b$.  Consequently, this proof does not adapt to arbitrary sets.  Instead, we relate any relation satisfying the axioms to Kim-independence directly, using a tree-induction, to prove the following theorem.

\begin{thm} \label{criterion}
Assume $T$ satisfies existence.  The theory $T$ is NSOP$_{1}$ if and only if there is an \(\text{Aut}(\mathbb{M})\)-invariant ternary relation \(\ind\) on small subsets of the monster \(\mathbb{M} \models T\) which satisfies the following properties, for an arbitrary set of parameters \(A\) and arbitrary tuples from $\mathbb{M}$.
\begin{enumerate}
\item Strong finite character: if \(a \nind_{A} b\), then there is a formula \(\varphi(x,b,m) \in \text{tp}(a/bA)\) such that for any \(a' \models \varphi(x,b,m)\), \(a' \nind_{A} b\). 
\item Existence and Extension:  $a \ind_{A} A$ always holds and, if $a \ind_{A} b$, then, for any $c$, there is $a' \equiv_{Ab} a$ such that $a' \ind_{A} bc$.
\item Monotonicity: \(aa' \ind_{A} bb'\) \(\implies\) \(a \ind_{A} b\).
\item Symmetry: \(a \ind_{A} b \iff b \ind_{A} a\).
\item The independence theorem: \(a \ind_{A} b\), \(a' \ind_{A} c\), \(b \ind_{A} c\) and $a \equiv^{L}_{A} a'$ implies there is $a''$ with $a'' \equiv_{Ab} a$, $a'' \equiv_{Ac} a'$ and $a'' \ind_{A} bc$.
\end{enumerate}
Moreover, any such relation $\ind$ satisfying (1)-(5) strengthens $\ind^{K}$, i.e.~$a \ind_{A} b$ implies $a \ind^{K}_{A} b$.  If, additionally, $\ind$ satisfies the following,
\begin{enumerate}
\setcounter{enumi}{5}
\item Transitivity:  if $a \ind_{A} b$ and $a \ind_{Ab} c$ then $a \ind_{A} bc$.
\item Local character:  if $\kappa \geq |T|^{+}$ is a regular cardinal, $\langle A_{i} : i < \kappa \rangle$ is an increasing continuous sequence of sets of size $ <\kappa$, $A_{\kappa} = \bigcup_{i < \kappa} A_{i}$ and $|A_{\kappa}| = \kappa$, then for any finite $d$,  there is some $\alpha < \kappa$ such that $d \ind_{A_{\alpha}} A_{\kappa}$.    
\end{enumerate}
then $\ind = \ind^{K}$.  
\end{thm}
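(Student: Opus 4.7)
The plan has four parts.

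\emph{Forward direction.} Take $\ind := \ind^{K}$. Strong finite character (1) is Lemma \ref{type definability of kim-independence} together with the type-definability of non-Kim-dividing; extension (2) combines Fact \ref{basic Kim-independence facts}(1) with the Lascar-strengthening Fact \ref{strong extension}; monotonicity (3) is immediate; symmetry (4) is Fact \ref{basic Kim-independence facts}(2); the independence theorem (5) is Fact \ref{basic Kim-independence facts}(5), which applies over arbitrary sets because $T_A$ is $G$-compact by Fact \ref{basic Kim-independence facts}(6); transitivity (6) is Theorem \ref{transitivity theorem}; and local character (7) follows from Fact \ref{local character} after naming $A$ as constants.

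\emph{First moreover clause and backward direction.} Assume $\ind$ satisfies (1)-(5) and $a \ind_A b$; I claim $a \ind^{K}_A b$. Mirroring the construction in Proposition \ref{extra good sequence}, I build by induction on $\alpha$ an $s$-indiscernible, weakly spread out tree $(b^\alpha_\eta)_{\eta \in \mathcal{T}_\alpha}$ over $A$ with $b^\alpha_{\zeta_0} = b$ and $a \ind_A b^\alpha_{\unrhd \eta}$ for every $\eta$. At successor stages, extension (2) produces a Morley-like sequence of copies of the stage-$\alpha$ tree; the independence theorem (5) places a new root copy of $b$ on top while preserving $a \ind_A$ to the entire array; and passing to an $s$-indiscernible tree locally based on this configuration (via Fact \ref{modeling}) completes the step. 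Strong finite character (1) is precisely what makes local basing legitimate, as any failure of $a \ind_A b^{\alpha+1}_{\unrhd \eta}$ would be witnessed by a first-order formula already satisfied in the underlying configuration. Erd\H{o}s-Rado at sufficiently large $\alpha$ delivers a weak Morley tree whose all-zeros branch $\langle b_i : i < \omega \rangle$ is a weak tree Morley sequence over $A$ with $b_0 = b$, $b_i \ind_A b_{<i}$, and $a \ind_A b_i$ for every $i$. If $a \nind^{K}_A b$, some $\varphi(x;b) \in \text{tp}(a/Ab)$ Kim-divides over $A$, so $\{\varphi(x;b_i) : i < \omega\}$ is inconsistent by Fact \ref{slightly less basic Kim-independence facts}(2). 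On the other hand, the $b_i$ share a single Lascar strong type over $A$, so for each $i$ some $\sigma_i \in \mathrm{Autf}(\mathbb{M}/A)$ sends $b$ to $b_i$; setting $a_i := \sigma_i(a)$ yields $a_i \equiv^{L}_A a$, $a_i \ind_A b_i$, and $\models \varphi(a_i;b_i)$. Iterating the independence theorem (5) along the $\ind$-Morley sequence $\langle b_i \rangle$ amalgamates these into a common realization of $\{\varphi(x;b_i) : i < \omega\}$, the desired contradiction. The backward direction of the main equivalence follows immediately: from an SOP$_{1}$ configuration $(a_\eta)_{\eta \in 2^{<\omega}}$ as in Definition \ref{sop1def} the same amalgamation yields a common realization of an inconsistent set of instances of $\varphi$, in the manner of \cite[Theorem 6.1]{ArtemNick}.

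\emph{Second moreover clause.} Assume $\ind$ additionally satisfies (6)-(7) and $a \ind^{K}_A b$. By the chain condition (Fact \ref{basic Kim-independence facts}(4)) and symmetry, build an $\ind^{K}$-Morley sequence $\langle b_i : i < \kappa \rangle$ over $A$ with $b_0 = b$ which is $Aa$-indiscernible, for some regular $\kappa \geq |T|^{+}$. Local character (7) applied to the increasing continuous chain $\langle A b_{[1,i)} : 1 \leq i \leq \kappa \rangle$ yields some $\alpha$ with $a \ind_{A b_{[1,\alpha)}} b_i$ for all $i \geq \alpha$. Choosing such an $i$ and applying a Lascar-strong $Aa$-automorphism sending $b_i$ to $b$ produces $a \ind_{A b'} b$ for some $b' \equiv_{Aa} b_{[1,\alpha)}$, and hence $a \ind^{K}_A b'$. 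Transitivity (6) then reduces the problem $a \ind_A b$ to $a \ind_A b'$: the same shape of question, but with the right-hand parameter sitting inside an $\ind^{K}$-Morley sequence from $A$. Iterating this reduction and terminating via an appropriate rank or length argument in the spirit of the classical Kim-Pillay uniqueness proof for non-forking closes the loop.

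\emph{Main obstacle.} The substantive difficulty, flagged by the authors in the introduction to this section, is the tree induction in the first moreover: over models, the classical argument would route through coheirs, which are unavailable over arbitrary sets. The replacement invokes only the abstract properties (1)-(5) together with the $s$-indiscernible modeling property of Fact \ref{modeling}. The fragile point is the local basing at each successor stage, which succeeds only because strong finite character (1) exhibits a first-order witness to any failure of $a \ind_A b^\alpha_{\unrhd \eta}$; without this property, local basing could destroy the inductive hypothesis and the construction would collapse.
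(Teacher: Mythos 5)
Your forward direction and the citation of \cite[Theorem 6.1]{ArtemNick} for the backward implication match the paper, but the two ``moreover'' clauses both have genuine gaps. In the first, your tree induction carries the wrong invariant. The endgame you describe needs the extracted branch to be $\ind$-Morley for the \emph{abstract} relation, i.e.\ $b_i \ind_A b_{<i}$, since the independence theorem (5) can only be iterated along a sequence whose terms are $\ind$-independent of the earlier ones; but nothing in your inductive hypotheses ($s$-indiscernibility, weak spread-out-ness, and $a \ind_A b^{\alpha}_{\unrhd\eta}$) produces this, so the claim that Erd\H{o}s-Rado ``delivers $b_i \ind_A b_{<i}$'' is unsupported. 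Meanwhile the invariant you do try to maintain is both unnecessary and unobtainable: unnecessary because in your final step you only use $A$-conjugates $a_i = \sigma_i(a)$, and $a_i \ind_A b_i$ already follows from invariance of $\ind$ applied to $a \ind_A b$; unobtainable because the stage-$\alpha$ copies form a \emph{nonforking} Morley sequence over $A$, and axiom (5) amalgamates only over sides that are $\ind$-independent, so ``preserving $a \ind_A$ to the entire array'' would require a chain condition for $\ind$ along nonforking Morley sequences, which is not among (1)--(5) (for $\ind^{K}$ it comes from Kim's lemma, not from the axioms). The paper's construction instead maintains the root condition $b^{\alpha}_{\emptyset} \ind_A b^{\alpha}_{\vartriangleright\emptyset}$, which extension (2) gives for free (pick $b_* \equiv_A b$ with $b_* \ind_A$ the Morley sequence of copies), and strong finite character plus monotonicity carry it through local basing and Erd\H{o}s-Rado; that is exactly what makes the all-zeros path an $\ind$-Morley sequence, after which the amalgamation (and the contradiction, either via Kim's lemma on the branch or via the explicit SOP$_1$ pattern using the side nodes $\zeta_{\alpha+1}\frown\langle 1\rangle$) goes through. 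So the fix is to swap your invariant for the root-independence one; $a$ should play no role in the tree construction at all.

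In the second moreover clause, your argument only reduces the goal $a \ind_A b$ to a statement of the same shape with a new right-hand parameter $b'$, and the concluding appeal to ``an appropriate rank or length argument'' is not supplied; no well-founded measure is available (the $D_{1}$ ranks of the paper measure Kim-dividing of formulas, not this), so the loop does not close. The paper's argument avoids any iteration: the proof of Theorem \ref{witnessing} uses only transitivity and local character, so (6) and (7) yield witnessing for $\ind$ itself ($\ind$-Morley sequences witness $\nind_A$); then, given $a \ind^{K}_{A} b$, one builds an $\ind$-Morley sequence $I$ starting at $b$ by extension and Erd\H{o}s-Rado, notes that $I$ is also $\ind^{K}$-Morley by the first moreover clause, realizes $\mathrm{tp}(a/Ab)$ coherently along $I$ (some $a' \equiv_{Ab} a$ with $I$ indiscernible over $Aa'$), and concludes $a \ind_A b$ directly from witnessing for $\ind$.
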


\begin{proof}
We know that if $T$ is NSOP$_{1}$ with existence then $\ind^{K}$ satisfies (1)-(5) by Fact \ref{basic Kim-independence facts}, so we will show the other direction.

First, assume $\ind$ satisfies axioms (1)-(5), and we will show that $\ind$ strengthens $\ind^{K}$.  By \cite[Theorem 6.1]{ArtemNick}, the existence of such a relation over models entails that the theory is NSOP$_{1}$.  Towards contradiction, suppose there is some set of parameters $A$ and tuples $a,b$ such that $a \ind_{A} b$ but $a \nind^{K}_{A} b$, witnessed by the formula $\varphi(x;b) \in \text{tp}(a/Ab)$.  By induction on the ordinals $\alpha$, we will construct trees $(b^{\alpha}_{\eta})_{\eta \in \mathcal{T}_{\alpha}}$ satisfying the following conditions for all $\alpha$:
\begin{enumerate}[(a)]
\item For all $\eta \in \mathcal{T}_{\alpha}$, $b^{\alpha}_{\eta} \models \text{tp}(b/A)$.  
\item $(b^{\alpha}_{\eta})_{\eta \in \mathcal{T}_{\alpha}}$ is $s$-indiscernible and weakly spread out over $A$.  
\item For $\alpha$ successor, $b^{\alpha}_{\emptyset} \ind_{A} b^{\alpha}_{\vartriangleright \emptyset}$.
\item If $\beta < \alpha$, then $b^{\alpha}_{\iota_{\beta \alpha}(\eta)} = b^{\beta}_{\eta}$ for all $\eta \in \mathcal{T}_{\beta}$.  
\end{enumerate}
To begin, we may take $b^{0}_{\emptyset} = b$ and at limits we take unions.  Given $(b^{\alpha}_{\eta})_{\eta \in \mathcal{T}_{\alpha}}$, let $I = \langle (b^{\alpha}_{i,\eta})_{\eta \in \mathcal{T}_{\alpha}} : i < \omega \rangle$ be a mutually $s$-indiscernible Morley sequence over $A$ with $(b^{\alpha}_{0,\eta})_{\eta \in \mathcal{T}_{\alpha}} = (b^{\alpha}_{\eta})_{\eta \in \mathcal{T}_{\alpha}}$, which exists by Lemma \ref{Morley mutual}.  We may apply extension to find $b_{*} \equiv_{A} b$ such that $b_{*} \ind_{A} I$.  Define a tree $(c_{\eta})_{\eta \in \mathcal{T}_{\alpha+1}}$ by setting $c_{\emptyset} = b_{*}$ and $c_{\langle i \rangle \frown \eta} = b_{i,\eta}^{\alpha}$ for all $i < \omega$ and $\eta \in \mathcal{T}_{\alpha}$.  Finally, we may define $(b^{\alpha+1}_{\eta})_{\eta \in \mathcal{T}_{\alpha+1}}$ to be an $s$-indiscernible tree over $A$ locally based on $(c_{\eta})_{\eta \in \mathcal{T}_{\alpha+1}}$.  After moving by an automorphism, we can assume $b^{\alpha+1}_{\iota_{\alpha\alpha+1}(\eta)} = b^{\alpha}_{\eta}$ for all $\eta \in \mathcal{T}_{\alpha}$.  This completes the construction.  By strong finite character and invariance, our construction ensures (b) and (c) will be satisfied for $(b^{\alpha+1}_{\eta})_{\eta \in \mathcal{T}_{\alpha+1}}$.  

Applying Erd\H{o}s-Rado, we obtain a weak Morley tree $(b_{\eta})_{\eta \in \mathcal{T}_{\omega}}$ over $A$ such that $b_{\zeta_{\alpha}}  \ind_{A} b_{\vartriangleright \zeta_{\alpha}}$ for all $\alpha < \omega$.  In particular, $(b_{\zeta_{\alpha}})_{\alpha < \omega}$ is an $\ind$-Morley sequence over $A$.  Define $\nu_{\alpha} = \zeta_{\alpha+1} \frown \langle 1 \rangle$ for all $\alpha < \omega$.  Because the tree is weakly spread out over $A$, we have for all $\alpha < \omega$, $b_{\unrhd \zeta_{\alpha+1} \frown \langle 1 \rangle} \ind^{K}_{A} b_{\unrhd \zeta_{\alpha + 1} \frown 0}$ and hence $b_{\nu_{\alpha}} \ind^{K}_{A} b_{\nu_{<\alpha}}$ since $(b_{\nu_{\beta}})_{\beta < \alpha}$ was enumerated in $b_{\unrhd \zeta_{\alpha+1} \frown 0}$.  Since the tree is a weak Morley tree, we have that both $(b_{\zeta_{\alpha}})_{\alpha < \omega}$ and $(b_{\nu_{\alpha}})_{\alpha < \omega}$ are $A$-indiscernible.

As $b \equiv_{A} b_{\zeta_{0}}$, there is $a_{0}$ such that $a_{0}b_{\zeta_{0}} \equiv_{A} ab$.  As $(b_{\zeta_{\alpha}})_{\alpha < \omega}$ is $A$-indiscernible, for each $\alpha > 0$, there is $\sigma_{\alpha} \in \mathrm{Autf}(\mathbb{M}/A)$ such that $\sigma_{\alpha}(b_{\zeta_{0}}) = b_{\zeta_{\alpha}}$.  Setting $a_{\alpha} = \sigma_{\alpha}(a_{0})$, we have $a_{\alpha}b_{\zeta_{\alpha}} \equiv^{L}_{A} a_{0}b_{\zeta_{0}}$. By the independence theorem for $\ind$, we may find some $a_{*}$ such that $a_{*} b_{\zeta_{\alpha}} \equiv_{A} ab$ for all $\alpha < \omega$.  In particular, this implies $\{\varphi(x;b_{\zeta_{\alpha}}) : \alpha < \omega\}$ is consistent.  However, since $\varphi(x;b)$ Kim-divides over $A$, $\{\varphi(x;b_{\nu_{\alpha}}) : \alpha < \omega\}$ is $k$-inconsistent for some $k$.  The $s$-indiscernibility of the tree implies that $b_{\zeta_{\alpha}} \equiv_{A b_{\zeta_{>\alpha}}b_{\nu_{>\alpha}}} b_{\nu_{\alpha}}$ for all $\alpha$, so by compactness, we have shown $\varphi$ has SOP$_{1}$.  This contradiction shows that $\ind$ strengthens $\ind^{K}$.

Secondly, assume additionally that $\ind$ satisfies (6) and (7).  The proof of Theorem \ref{witnessing} shows that (6) and (7) imply \emph{witnessing}:  if $I = (b_{i})_{i < \omega}$ is an $A$-indiscernible sequence with $b_{0} = b$ satisfying $b_{i} \ind_{A} b_{<i}$, then whenever $a \nind_{A} b$, there is $\varphi(x;c,b) \in \text{tp}(a/Ab)$ such that $\{\varphi(x;c,b_{i}) : i < \omega\}$ is inconsistent. 

Suppose that $a \ind^{K}_{A} b$ and, by extension and Erd\H{o}s-Rado, find an $\ind$-Morley sequence $I = \langle b_{i} : i < \omega \rangle$ over $A$ with $b_{0} =b$.  By the remarks above, we know that $I$ is, in particular, an $\ind^{K}$-Morley sequence over $A$, so there is $a' \equiv_{Ab} a$ such that $I$ is $Aa$-indiscernible (using $a \ind^{K}_{A} b)$.  By witnessing, this entails $a \ind_{A} b$.  In other words, $\ind$ and $\ind^{K}$ coincide.  
\end{proof}

\begin{rem}
It is clear from the proof of Theorem \ref{criterion} that, in order to get the same conclusion, we can replace (6) and (7) with witnessing:  if $I = (b_{i})_{i < \omega}$ is an $A$-indiscernible sequence with $b_{0} = b$ satisfying $b_{i} \ind_{A} b_{<i}$, then whenever $a \nind_{A} b$, there is $\varphi(x;c,b) \in \text{tp}(a/Ab)$ such that $\{\varphi(x;c,b_{i}) : i < \omega\}$ is inconsistent. 

To do so gives a result that more closely resembles \cite[Theorem 6.11]{kaplan2019transitivity}, while the formulation of Theorem \ref{criterion} above is closer to the original Kim-Pillay theorem for simple theories \cite[Theorem 4.2]{kim1997simple} (see also \cite[Theorem 3.3.1]{kim2013simplicity}).  
\end{rem}

%\subsection*{Acknowledgements}
%
%WHO TO THANK?

\bibliographystyle{plain}
\bibliography{ms.bib}{}

\end{document}